\def\verbatim{\interlinepenalty\@M \@verbatim
  \leftskip\@totalleftmargin\advance\leftskip2pc
  \frenchspacing\@vobeyspaces \@xverbatim}
\newtheorem{thm}{Theorem}[section]
\newtheorem{cor}[thm]{Corollary}
\newtheorem{lem}[thm]{Lemma}
\newtheorem{prop}[thm]{Proposition}
\newtheorem{claim}[thm]{Claim}
\theoremstyle{definition}
\newtheorem{defn}{Definition}[section]
\theoremstyle{remark}
\newtheorem{rem}{Remark}[section]
\numberwithin{equation}{section}
\newtheorem{ex}[thm]{Example}
\DeclareMathOperator{\sgn}{sgn}
\newcommand{\ca}{\mathrm{card} \hspace{.02in}}
\newcommand{\hu}{\hspace{.02in}}
\newcommand{\va}{\left|}
\newcommand{\vb}{\right|}
\newcommand{\vd}{\right\| }
\newcommand{\vc}{\left\| }
\newcommand{\pl}{\left(}
\newcommand{\pr}{\right)}
\newcommand{\ql}{\left[}
\newcommand{\qr}{\right]}
\newcommand{\tl}{\left\{}
\newcommand{\tr}{\right\}}
\newcommand{\dg}{\epsilon-\mathbf{DP} \pl X, Y \pr }
\newcommand{\wc}{\mathbf{WCM} \pl X,Y \pr }
\newcommand{\dk}{\epsilon-\mathbf{DP}  \pl X, \mathbb{K} \pr }
\newcommand{\wk}{\mathbf{WCM} \pl X, \mathbb{K} \pr }
\newcommand{\cl}{\mathrm{cl} \hu}
\newcommand{\ra}{\longrightarrow}
\title[Stability and instability]{Stability and instability of weighted composition operators}
\author{Jes\'us Araujo}
\address{Departamento de Matem\'aticas,
Estad\'{\i}stica y Computaci\'on\\ Universidad de Cantabria\\
Facultad de Ciencias\\ Avda.
de los Castros, s. n.\\ E-39071 Santander, Spain}
\email{araujoj@unican.es}
\thanks{2000 {\em Mathematics Subject Classification}.
Primary 47B38; Secondary 46J10, 47B33.}
\thanks{Research of the first author was partially supported by
the Spanish Ministry of Science and Education (Grants numbers MTM2004-02348 and MTM2006-14786).}
\author{Juan J. Font}
\address{Departamento de Matem\'aticas\\
Universitat Jaume I\\ Campus Riu Sec\\ 8029 AP, Castell\'on, Spain}
\email{font@mat.uji.es}
\thanks{Research of the second author was partially supported by
European Union (FEDER) and the
Spanish Ministry of Science and Education (Grant number
MTM2004-07665-C02-01), and by Bancaixa (Projecte P1-1B2005-22
codi 05I343).}
\date{}
\begin{document}

\begin{abstract}
Let $\epsilon >0$.  A continuous linear operator
 $T:C(X) \ra C(Y)$ is said to be {\em
$\epsilon$-disjointness preserving} if $\vc (Tf)(Tg)\vd_{\infty}
\le \epsilon$, whenever  $f,g\in C(X)$ satisfy $\vc f\vd_{\infty}
=\vc g\vd_{\infty} =1$ and $fg\equiv 0$. In this paper we address
basically two main questions:

1.- How close there must be a weighted composition operator  to a given
$\epsilon$-disjointness preserving operator?

2.- How far can the set of weighted composition operators be from a
given $\epsilon$-disjointness preserving operator?

We address these two questions distinguishing among three cases:
$X$ infinite, $X$ finite, and $Y$ a singleton
($\epsilon$-disjointness preserving functionals).

We provide sharp stability and instability bounds for the three
cases.
\end{abstract}

\maketitle

\section{Introduction}

Suppose that a mathematical object satisfies a certain property
approximately. Is it then possible to approximate this object by
objects that satisfy the property exactly? This stability problem
appears in almost all branches of mathematical analysis and is of
particular interest in probability theory and in the realm of
functional equations. Within this context, considerable attention
has been mainly given to approximately multiplicative maps (see
\cite{Jo1}, \cite{Jo}, \cite{Jar}, and \cite{Se}) and to
approximate isometries (see \cite{HU}, \cite{HU2}, \cite{Bour},  and
\cite{HIR}).

\medskip
Recently, G. Dolinar (\cite{Dol}) treated a more general problem
of stability concerning a  kind of operators which "almost"
preserves  the disjointness of cozero sets (see Definition~\ref{djp}).

\bigskip
We need some notation. Let $\mathbb{K}$ denote the field of real or
complex numbers. Topological spaces $X$ and $Y$ are assumed to be compact
and Hausdorff. Also $C(X)$
stands for the Banach space of all $\mathbb{K}$-valued continuous
functions defined on $X$, equipped with its usual supremum norm.

\begin{defn}
An operator $S: C(X) \ra C(Y)$ is said to be a {\em weighted composition map}  if there exist $a \in C(Y)$ and a  map $h: Y \ra X$, continuous on
$c(a) := \tl y \in Y : a(y) \neq 0 \tr  $,
 such that  \[(Sf) (y) = a(y) f(h(y))\]
  for every $f \in C(X)$
and $y \in Y $.
\end{defn}

Obviously every weighted composition map is linear and continuous. We also include the case that $S \equiv 0$
as a weighted composition map (being $c(a) = \emptyset$).

\medskip

Recall that a linear operator $T:C(X)\longrightarrow C(Y)$ is said to be {\em disjointness preserving} (or {\em
separating})  if,
given $f,g\in C(X)$, $fg\equiv 0$ yields $(Tf)(Tg)\equiv 0$. Clearly every weighted composition map is disjointness
preserving. Reciprocally, it is well known that if a  disjointness preserving operator is {\em continuous}, then it is a weighted composition. On the other hand, automatic continuity of disjointness preserving operators can be obtained sometimes (see for instance \cite{Jz}, \cite{ABN}, \cite{FH}, \cite{JW})).

\begin{defn}\label{djp}
Let $\epsilon >0$. A continuous linear operator
 $T:C(X)\longrightarrow C(Y)$ is said to be {\em
$\epsilon$-disjointness preserving} if $\vc (Tf)(Tg)\vd_{\infty}  \le \epsilon$, whenever  $f,g\in C(X)$ satisfy
$\vc f\vd_{\infty} =\vc g\vd_{\infty} =1$ and $fg\equiv 0$ (or, equivalently, if $\vc (Tf)(Tg)\vd_{\infty}  \le \epsilon \vc f \vd_{\infty} \vc g \vd_{\infty}$ whenever $fg\equiv 0$).
\end{defn}

Obviously the study of
$\epsilon$-disjointness preserving operators can be restricted to
those of norm $1$, because if $T \neq 0$ is $\epsilon$-disjointness
preserving, then $T/\vc T \vd$ is $\epsilon/\vc T
\vd^2$-disjointness preserving. On the other hand, every such $T$ has the trivial weighted composition map $S \equiv 0$ at distance
$1$. That is,  giving any bound equal to or bigger than $1$ does not provide any information on the problem.
Apart from this,
 it can be easily checked that
every continuous linear functional on $C(X)$ of norm $1$ is
$1/4$-disjointness preserving and, consequently, every continuous
linear map $T:C(X)\longrightarrow C(Y)$ with $\vc T\vd  =1$ is
$1/4$-disjointness preserving. Thus, if we consider again the trivial weighted composition map
$S\equiv 0$, then
$\vc T-S\vd  =1$. We conclude that our study can be restricted to $\epsilon$ belonging to the interval $(0, 1/4)$.

\medskip

In  \cite{Dol} the author, following the above stability questions,
studies when an $\epsilon$-disjointness preserving operator is
close to
a weighted composition map. The main result in \cite{Dol} reads as follows:
Let $\epsilon > 0$ and let $T:C(X)\longrightarrow C(Y)$ be an
 $\epsilon$-disjointness preserving operator with
$\vc T\vd  =1$. Then there exists a
 weighted composition map $S:C(X)\longrightarrow C(Y)$ such that
\[\vc T-S\vd  \le 20\sqrt{\epsilon}.\]

In view of the above comments we conclude that Dolinar's result is meaningful only for $\epsilon \in \pl 0, 1/400 \pr$.

Apart from the general case, Dolinar also concentrates on the study of linear and continuous functionals,  where the bound given is  $3 \sqrt{\epsilon}$ (see \cite[Theorem 1]{Dol}).

\medskip

On the other hand,
notice that  when  $X$ has just one point, we are in a situation of "extreme stability", because every
continuous linear operator
is a weighted composition map. But in general,
given an $\epsilon$-disjointness preserving operator, we do not necessarily have a weighted composition map arbitrarily close. Instability questions  deal with bounds of how far apart
an $\epsilon$-disjointness preserving operator can be from all weighted composition maps.

\bigskip

In the present paper we improve Dolinar's result by showing, under
necessary restrictions on $\epsilon$, that a weighted composition map is
indeed much closer. If fact we address the following two questions.
Given any $\epsilon$-disjointness preserving operator,
\begin{enumerate}
\item\label{domench} {\sc Stability.} {\em How  close} there must be a weighted composition map? That is,
find the
shortest distance at which we can be certain that there exists a  weighted composition map.
\item\label{lironcaretu} {\sc Instability.} {\em How far} the set of all weighted composition maps can be?
That is,
find the
longest distance at which we cannot be certain that there exists a  weighted composition map.
\end{enumerate}

\medskip

{\sc How close.}
We prove that, for every $\epsilon < 2/17$,  the number $ \sqrt{17\epsilon /2}$ is a bound valid for every $X$ and $Y$ (Theorem~\ref{rz}). It is indeed the smallest in every case, as  we give an example such that, for every $\epsilon <2/17$,   no number strictly less than  $\sqrt{17 \epsilon/2}$  satisfies it (Example \ref{gustavo}).

The question appears to be very related to the following: Find the biggest set $\mathbb{I} \subset (0, 1/4)$ such that every
$\epsilon \in \mathbb{I}$ has the following property: {\em Given an
 $\epsilon$-disjointness preserving operator $T: C(X)
\longrightarrow C(Y)$ with $\vc T \vd =1$, there exists a weighted
composition map  $S: C(X) \longrightarrow C(Y)$ such that $\vc T - S \vd
<1$.} We prove that  $\mathbb{I}= (0, 2/17)$ (Theorem~\ref{rz} and Example~\ref{andalu}).

We will also study the particular case when $X$ is finite. Here the bound, which can be given for every $\epsilon <1/4$
and  every $Y$,
is the number $2 \sqrt{\epsilon}$, and is sharp (Theorem~\ref{opodel} and Example~\ref{llegomalenayamigos2}).

\medskip

{\sc How far.}
Of course,  an answer valid for every case would be trivial, because if we take $X$ with just one point, then every
continuous linear operator is a weighted composition map, so the best bound is just $0$. If  we avoid this trivial case and require
$X$ to have at least two points, then we can see that again the problem turns out to be trivial since the best bound is now attained for sets with two points. The same happens if we
require the set $X$ to have at least $k$ points.

In general,  it can be seen that the answer does not depend on the topological features
of the spaces but on their cardinalities. If we assume that $Y$ has at least two points, then
the number
 $2 \sqrt{ \epsilon}$ is a valid bound if $X$ is infinite  (Theorem~\ref{ex3}), and a
 different value plays the same r\^{o}le  for each finite set $X$ (Theorem~\ref{recero}).

We also prove that these estimates are sharp in every case (Theorems~\ref{vr} and \ref{cero}). But here,
instead of providing a concrete counterexample, we can show that the bounds are best for a general family
of spaces $Y$, namely, whenever $Y$ consists of the
Stone-\v{C}ech compactification of any discrete space.

On the other hand, unlike the previous question, the answer can be given for every $\epsilon < 1/4$.

\medskip

{\sc The case of continuous linear functionals.}
The context when $Y$ has just one
point, that is, the case of continuous linear functionals, deserves to be studied separately. We do this in  Sections~\ref{bound} and \ref{ninguno}. In fact some results given in this case will be tools for a more general study. Various  situations appear in this context, depending on  $\epsilon$. Namely, if
$\epsilon<1/4$,  then the
results depend on an suitable splitting of the interval $(
0, 1/4 )$ (based on the sequence $(\omega_n)$ defined below), as well as on the cardinality of $X$ (Theorem~\ref{martakno}).

Also, as we mentioned above,  when  $X$ has just one point,  every  element of $C(X)'$
is a weighted composition map, that is,  a scalar multiple of the evaluation functional $\delta_x$. We will see that
a related phenomenon sometimes arises  when $X$ is finite (see Remark~\ref{nadenas}).

In every case our results are sharp.

\medskip

{\bf Notation.}
Throughout $\mathbb{K} = \mathbb{R}$ or $\mathbb{C}$. $X$ and $Y$ will be (nonempty) compact Hausdorff spaces.
To avoid the trivial case, we will always assume that $X$ has at least two points.
In a Banach space $E$, for $e \in E$ and $r>0$, $B (e, r)$ and $\overline{B} (e, r)$ denote the open and the closed
balls of center $e$ and radius $r$, respectively.

\smallskip

{\em Spaces and functions.} Given any compact Hausdorff space $Z$, we denote by $\ca Z$ its cardinal. $C(Z)$ will be the Banach space of all $\mathbb{K}$-valued continuous functions on $Z$, endowed with the sup norm $\vc \cdot \vd_{\infty}$. $C(Z)'$ will denote the space of linear and continuous functionals defined on $C(Z)$. If $a \in \mathbb{K}$, we denote by $\widehat{a}$ the constant function equal to $a$ on $Z$. In the special case of the constant function equal to $1$, we denote it by ${\bf 1}$. For $f \in C(Z)$, $0 \le f \le 1$ means that $f(x) \in [0,1]$ for every $x \in Z$.  Given
$f\in C(Z)$, we will consider that
$c(f) =\tl x \in Z : f(x) \neq 0 \tr$ is its cozero set, and ${\rm supp}(f)$ its support. Finally, if $A \subset Z$, we denote by $\cl A$ the closure of $A$ in $Z$, and by $\xi_A$ the characteristic function of $A$.

\smallskip

{\em Continuous linear functionals and measures: $\lambda_{\varphi}$, $\va \lambda \vb$, $\delta_x$.}
For  $\varphi \in C(X)'$, we will write
$\lambda_{\varphi}$   to denote the measure which represents it. For a regular measure
$\lambda$, we will denote by $\va \lambda \vb$ its total variation. Finally, for $x \in X$, $\delta_x$ will be the evaluation functional at $x$, that is, $\delta_x (f) := f(x)$ for every $f \in C(X)$.

\smallskip

{\em The linear functionals $T_y$ and the sets $Y_r$.}
Suppose that $T: C(X) \ra C(Y)$ is linear and continuous.
Then, for each $y\in Y$, we define a continuous linear functional $T_y$ as
$T_{y}(f):=(Tf)(y)$ for every $f \in C(X)$. Also, for each $r \in \mathbb{R}$ we define $Y_r :=\left\{y\in Y:\vc T_{y}\vd  > r \right\}$, which is an open set.
It is clear that, if $\vc T\vd  =1$, then $Y_r$ is nonempty for each $r<1$.

\smallskip

{\em The sets of operators.}
We denote by $\dg$
the set of all $\epsilon$-disjointness preserving operators from $C(X)$ to $C(Y)$,
and by $\wc$
the set of all weighted composition maps from $C(X)$ to $C(Y)$. When $Y$ has just one point, then  $\dg$ and $\wc$ may be viewed as subspaces of $C(X)'$. In this case, we will use the notation $\dk$ and $\wk$ instead of $\dg$ and $\wc$, respectively. That is,
$\dk $ is the space of all $\varphi \in C(X)'$ which satisfy $\va \varphi (f) \vb \va \varphi (g) \vb \le \epsilon$ whenever $f , g \in C(X)$ satisfy $\vc f \vd_{\infty} = 1 = \vc g \vd_{\infty}$ and $f g \equiv 0$, and $\wk$ is the subset of $C(X)'$ of elements of the form $\alpha \delta_x$, where $\alpha \in \mathbb{K}$ and $x \in X$.

\smallskip

{\em The sequences $(\omega_n)$ and $\pl \mathbb{A}_n \pr $.} We define, for each $n \in \mathbb{N}$, $$\omega_n := \frac{n^2 -1}{4 n^2}$$
and
\[\mathbb{A}_n: = \ql \omega_{2n-1}, \omega_{2n+1} \pr , \]
It is clear that $\pl \mathbb{A}_n \pr$ forms a partition of the interval $[0, 1/4)$.

The sequences $(\omega_n)$ and $\pl \mathbb{A}_n \pr $ will determine bounds in Sections~\ref{leffe} and \ref{puntodencuentro}.

\section{Main results I: How close. The general case}

In this section we
give the best stability bound in the general case.
This result is valid for every $X$ in general, assuming no restrictions on cardinality (see Section~\ref{paulus-15nov07} for the proof).

\begin{thm}\label{rz}
Let  $0 < \epsilon
< 2 /17$, and let $T \in \dg$  with $\vc  T\vd =1$.
Then
$$\overline{B} \pl T, \sqrt{\frac{17 \epsilon}{2}} \pr \cap \wc \neq \emptyset.$$
\end{thm}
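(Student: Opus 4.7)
The plan is to construct $S$ pointwise in $y$, reducing the problem to the scalar (functional) case. For each $y\in Y$, the functional $T_y\in C(X)'$ defined by $T_y(f)=(Tf)(y)$ inherits from $T$ the inequality $|T_y(f)T_y(g)|\le\epsilon$ whenever $fg\equiv 0$ and $\|f\|_\infty=\|g\|_\infty=1$, and it satisfies $\|T_y\|\le\|T\|=1$. Setting $r:=\sqrt{17\epsilon/2}$, note that $r<1$ precisely because $\epsilon<2/17$; and for any $y$ with $\|T_y\|>r$, the rescaled functional $T_y/\|T_y\|$ has norm $1$ and is $\eta$-disjointness preserving with $\eta:=\epsilon/\|T_y\|^2<2/17$, placing it inside the meaningful range where a nontrivial approximation by an element of $\wk$ is expected.

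The main engine is therefore a sharp functional version of the theorem, to be developed in Section~\ref{bound}: for every $\varphi\in C(X)'$ with $\|\varphi\|=1$ that is $\eta$-disjointness preserving with $\eta<2/17$, there exist $x_\varphi\in X$ and $\alpha_\varphi\in\mathbb{K}$ such that $\|\varphi-\alpha_\varphi\delta_{x_\varphi}\|_{C(X)'}\le\sqrt{17\eta/2}$. I would apply this to $T_y/\|T_y\|$ and rescale for each $y$ with $\|T_y\|>r$, producing $h(y)\in X$ and $a(y)\in\mathbb{K}$ with $\|T_y-a(y)\delta_{h(y)}\|_{C(X)'}\le\sqrt{17\epsilon/2}$. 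For $y$ with $\|T_y\|\le r$, set $a(y):=0$ and $h(y)$ arbitrary; the same bound then holds trivially. Defining $(Sf)(y):=a(y)f(h(y))$, the estimate $\|T-S\|\le\sqrt{17\epsilon/2}$ is immediate from $\|T-S\|=\sup_{y\in Y}\|T_y-a(y)\delta_{h(y)}\|_{C(X)'}$.

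It remains to verify that the pair $(a,h)$ defines a bona fide member of $\wc$, i.e.\ that $a\in C(Y)$ and $h$ is continuous on $c(a):=\{y\in Y:a(y)\neq 0\}$. For $y_0\in c(a)$, the point $h(y_0)$ is essentially uniquely characterized as the point at which $T_{y_0}$ concentrates. For any open $U\ni h(y_0)$, picking a peaking function $u\in C(X)$ with $0\le u\le 1$, $u(h(y_0))=1$, and $\mathrm{supp}(u)\subset U$, the function $(Tu)(y)=T_y(u)$ is continuous in $y$, and combined with the bound $\|T_y-a(y)\delta_{h(y)}\|\le r<1$ one deduces that $h(y)\in U$ and $a(y)\to a(y_0)$ as $y\to y_0$. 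Choosing $U$ arbitrarily small then yields local continuity of $h$ on $c(a)$ and of $a$ on all of $Y$.

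The main obstacle is the functional-case lemma with the sharp constant $\sqrt{17\eta/2}$. Its heart is a dichotomy for the representing measure $\lambda_\varphi$: if $F_1,F_2\subset X$ are disjoint closed sets whose $|\lambda_\varphi|$-masses are both bounded below, then by Urysohn one produces normalized, disjointly supported functions $f_i$ with $|\varphi(f_i)|$ close to $|\lambda_\varphi|(F_i)$, and the inequality $|\varphi(f_1)\varphi(f_2)|\le\eta$ yields an obstruction. Iterating forces $|\lambda_\varphi|$ to be concentrated, up to a small diffuse residue, at a single point $x_\varphi$; optimizing the trade-off between the mass at $x_\varphi$ and the residual mass should produce the precise constant $\sqrt{17\eta/2}$, and make $2/17$ exactly the threshold at which the bound hits $1$ and becomes vacuous.
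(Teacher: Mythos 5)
The decisive step in your plan --- that the pointwise choices $a(y):=\lambda_{T_y}(\{h(y)\})$ on $\{y:\|T_y\|>r\}$ and $a(y):=0$ elsewhere assemble into a genuine member of $\wc$ --- is exactly where the construction breaks down, and it is the problem the paper's whole proof is organized around. Two concrete failures: (i) the map $y\mapsto T_y$ is only weak-$*$ continuous, and the mass of a single atom is not a weak-$*$ continuous quantity (mass can migrate between the atom at $h(y)$ and a nearby diffuse part without changing any value $(Tf)(y)$), so $y\mapsto\lambda_{T_y}(\{h(y)\})$ need not be continuous even inside $\{\|T_y\|>r\}$; (ii) at the boundary of the open set $\{\|T_y\|>r\}$ your weight must jump, because just inside Lemma~\ref{nak} forces $|a(y)|\ge\bigl(\|T_y\|+\sqrt{\|T_y\|^2-4\epsilon}\bigr)/2$, which is bounded below by roughly $\bigl(\sqrt{17\epsilon/2}+\sqrt{9\epsilon/2}\bigr)/2>0$, while just outside you set $a\equiv 0$. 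Hence $Sf$ need not lie in $C(Y)$, and the identity $\|T-S\|=\sup_y\|T_y-a(y)\delta_{h(y)}\|$ is about an object that is not in $\wc$. Your sketched continuity argument for $h$ is also too crude: the bound $\|T_y-a(y)\delta_{h(y)}\|\le r<1$ does not localize $h(y)$, since near the cutoff $|a(y)|$ can be smaller than $r$; the paper's Proposition~\ref{lg} needs the half-sized estimate of Corollary~\ref{L3} together with a $\sqrt{\epsilon}$-threshold argument using the disjointness hypothesis. The paper resolves the gluing problem differently: it discards the atom mass as weight, uses the automatically continuous function $(T\mathbf{1})(y)$ instead, multiplies by a cutoff $\alpha=\sum_n\alpha_n2^{-(n+1)}$ built from Urysohn functions on the sets $Y_{\sqrt{17\epsilon/2}+\delta_n}$, controls the substitution error via Lemma~\ref{cv}, and uses Corollary~\ref{zapatillas1966} to guarantee that the closures of these sets stay inside $Y_{\sqrt{9\epsilon/2}}$; the constant $\sqrt{17\epsilon/2}$ is generated precisely by this containment ($17/2=9/2+4$), not by any pointwise estimate.

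Relatedly, the functional lemma you propose as the ``main engine'' is miscalibrated. There is no sharp functional bound $\sqrt{17\eta/2}$ with threshold $2/17$: for a norm-one $\eta$-disjointness preserving functional with $\eta<2/9$ the paper shows (Lemma~\ref{llata}, Theorem~\ref{martakno}) that one can achieve the much smaller distance $\bigl(1-\sqrt{1-4\eta}\bigr)/2$, and $2/17$ plays no role at the functional level. So your concentration-of-measure iteration, even if completed, would not ``produce'' $\sqrt{17\eta/2}$ as an optimum; the sharpness of $\sqrt{17\epsilon/2}$ and of the interval $(0,2/17)$ is an operator-level phenomenon (Examples~\ref{andalu} and \ref{gustavo}), caused by the continuity constraints linking the choices at different points of $Y$, and cannot be detected one $y$ at a time.
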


\begin{rem}\label{ensinkafetefever}
Theorem~\ref{rz} is accurate in two ways.
On the one hand, for every $\epsilon \in (0, 2/17)$, the above bound is sharp, as it can be seen in Example~\ref{gustavo}.
 On the other hand, we have that $(0, 2/17)$ is the maximal interval we can get a meaningful answer in. Namely, if
$\epsilon \ge 2/17$, then  it may be the case that $\vc T -S \vd \ge 1$ for every weighted composition map $S$
(Example~\ref{andalu}). But,
as it is explained in the comments after Definition~\ref{djp},
 this is not a proper answer for the stability question.
\end{rem}

\section{Main results II: How far. The case when $X$ is infinite}\label{004}

We  study  instability first when $X$ is infinite. Our results  depend on whether or not the space $X$ admits an  appropriate measure. 

\begin{thm}\label{ex3}
Let $0 < \epsilon < 1/4$. Suppose that $Y$ has at least two points,  and  that $X$  is infinite. Then for each $t<1$,  there exists
$T \in \dg$ with $\vc T\vd =1$ such that
$$B \pl T,  2 t \sqrt{\epsilon} \pr \cap \wc = \emptyset.$$

Furthermore, if $X$ admits an atomless regular Borel  probability  measure, then $T$ can be taken such that $$B \pl T,  2\sqrt{\epsilon} \pr \cap \wc = \emptyset.$$
\end{thm}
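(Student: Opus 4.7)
The plan is to build $T$ so that at one part of $Y$ it acts as a point evaluation (keeping $\vc T\vd =1$) and at another as a scaled probability measure, which simultaneously makes $T$ lie in $\dg$ and keeps it away from $\wc$. Concretely, fix distinct $y_1,y_2\in Y$ and use normality of $Y$ to pick $\eta_1,\eta_2\in C(Y)$ with $0\le \eta_i\le 1$, $\eta_i(y_i)=1$, and $\eta_1\eta_2\equiv 0$. Set $c:=2\sqrt{\epsilon}\in (0,1)$, fix any $x_1\in X$, and let $\nu$ be a regular Borel probability measure on $X$ (to be specialised in each part). Define
\[
  (Tf)(y):=\eta_1(y)\,f(x_1)+c\,\eta_2(y)\int_X f\,d\nu,\qquad f\in C(X),\ y\in Y.
\]

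I would first check that $\vc T\vd =1$ and $T\in\dg$. The norm equality is immediate from $\vc \eta_1\vd_{\infty}=1$ and $c<1$. For $\epsilon$-disjointness, the condition $\eta_1\eta_2\equiv 0$ forces $T_y$ to coincide at each $y$ with one of the two ``pure'' functionals $\eta_1(y)\delta_{x_1}$ or $c\,\eta_2(y)\,\nu$. For disjoint norm-one $f,g$, the first case gives $(Tf)(y)(Tg)(y)=\eta_1(y)^2 f(x_1)g(x_1)=0$, and the second gives $|(Tf)(y)(Tg)(y)|\le c^2|\nu(f)\nu(g)|\le c^2/4=\epsilon$, using $|\nu(f)|+|\nu(g)|\le \nu(|f|)+\nu(|g|)\le 1$ together with AM--GM.

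The heart of the proof is the distance lower bound. Given any $S\in\wc$ of the form $S(f)(y)=a(y)f(h(y))$, evaluation at $y_2$ yields $T_{y_2}-S_{y_2}=c\nu-a(y_2)\delta_{h(y_2)}$ (with the convention that the second term is $0$ when $a(y_2)=0$), whose total variation is a lower bound for $\vc T-S\vd $. If $\nu$ is atomless, $c\nu$ and $a(y_2)\delta_{h(y_2)}$ are mutually singular, so this total variation equals $c+|a(y_2)|\ge c=2\sqrt{\epsilon}$, yielding the ``furthermore'' assertion. For the general case, since $X$ is infinite I can pick $n$ distinct points $z_1,\ldots,z_n\in X$ with $n>1/(1-t)$ and take $\nu=\tfrac{1}{n}\sum_{i=1}^{n}\delta_{z_i}$. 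A short case split on whether $h(y_2)\in\{z_1,\ldots,z_n\}$ shows the total variation is always at least $c(n-1)/n\ge ct=2t\sqrt{\epsilon}$.

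I expect the only genuinely non-cosmetic step to be this last total-variation bookkeeping: one has to quantify how much mass of $a(y_2)\delta_{h(y_2)}$ can cancel the atomic part of $\nu$. The key observation is that even the optimal cancellation (when $h(y_2)=z_k$ for some $k$ and $a(y_2)=c/n$) removes only one atom of mass $c/n$, leaving residual mass $c(n-1)/n$; this is exactly why the discrete case gives the slightly weaker bound $2t\sqrt{\epsilon}$ with $t<1$, while the atomless case produces the sharp $2\sqrt{\epsilon}$.
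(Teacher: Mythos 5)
Your construction is essentially the paper's own: two disjointly supported bumps on $Y$, one carrying the point evaluation $\delta_{x_1}$ (to keep $\vc T \vd =1$) and the other carrying $2\sqrt{\epsilon}$ times a probability measure with small (or no) atoms, with the general infinite case handled by the uniform measure on $n$ distinct points, $n$ large depending on $t$. The only difference is that you bound $\vc T-S\vd$ from below by the total variation of $T_{y_2}-S_{y_2}$ (mutual singularity in the atomless case, atom bookkeeping in the discrete case), whereas the paper exhibits a Urysohn test function vanishing near $h(y_0)$ and uses regularity of the measure; both computations are correct and give the same bounds, so your argument is a valid, slightly slicker version of the same proof.
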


We also see that the above bounds are  sharp when considering some families of spaces $Y$.

\begin{thm}\label{vr}
Let  $0 < \epsilon
< 1/4$. Suppose that $Y$ is the  Stone-\v{C}ech compactification of a discrete space with at least two points, and that $X$ is infinite. Let $T \in \dg$ with $\vc  T\vd =1$.
Then
$$\overline{B} \pl T,  2\sqrt{\epsilon} \pr \cap \wc \neq \emptyset.$$

Furthermore, if $X$ does not admit an atomless regular Borel  probability  measure and $Y$ is finite (with $\ca Y \ge 2$), then
$$B \pl T,  2\sqrt{\epsilon} \pr \cap \wc \neq \emptyset.$$
\end{thm}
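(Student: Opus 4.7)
The plan is to exploit the structure $Y=\beta D$ with $D$ discrete: continuous functions on $Y$ are determined by their restrictions to the dense subset $D$, and every bounded function on $D$ extends uniquely to a member of $C(Y)$. I would first slice $T$ into point evaluations: for $y\in D$, the functional $T_y(f):=(Tf)(y)$ satisfies $\vc T_y\vd\le 1$, and a direct verification shows $T_y\in\dk$, since $\va T_y(f)T_y(g)\vb\le\vc(Tf)(Tg)\vd_\infty\le\epsilon$ whenever $fg\equiv 0$ and $\vc f\vd_\infty=\vc g\vd_\infty=1$.

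Next I would invoke the functional-case analogue proved in Section~\ref{bound} (the ``$Y$ a singleton'' counterpart of Theorem~\ref{rz}), which asserts, for $X$ infinite and $\epsilon<1/4$, that every $\varphi\in\dk$ of norm at most $1$ lies within $2\sqrt{\epsilon}$ of some element of $\wk$. Applying this to each $T_y$ produces $\alpha(y)\in\mathbb{K}$ and $h_0(y)\in X$ with $\vc T_y-\alpha(y)\delta_{h_0(y)}\vd\le 2\sqrt{\epsilon}$; in particular $\va\alpha(y)\vb\le 1+2\sqrt{\epsilon}$, so $\alpha$ is bounded on $D$ and extends uniquely to $a\in C(Y)$, while $h_0:D\to X$ extends to a continuous $h:Y\to X$ by the universal property of $\beta D$ combined with compactness of $X$. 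Setting $(Sf)(y):=a(y)f(h(y))$ gives $S\in\wc$.

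To obtain the first conclusion, fix $f\in C(X)$ with $\vc f\vd_\infty\le 1$; the difference $Tf-Sf$ is continuous on $Y$, and for $y\in D$ one has
\[
\va (Tf)(y)-a(y)f(h(y))\vb=\va T_y(f)-\alpha(y)\delta_{h_0(y)}(f)\vb\le 2\sqrt{\epsilon}.
\]
Density of $D$ in $Y$ together with continuity of $Tf-Sf$ promotes this pointwise bound on $D$ to $\vc Tf-Sf\vd_\infty\le 2\sqrt{\epsilon}$, so $\vc T-S\vd\le 2\sqrt{\epsilon}$. For the second assertion, I would appeal to the corresponding strict refinement in Section~\ref{bound}, which under the additional hypothesis that $X$ admits no atomless regular Borel probability measure yields $\vc T_y-\alpha(y)\delta_{h_0(y)}\vd<2\sqrt{\epsilon}$ for every $y$. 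When $Y$ is finite, $Y=D$ and $\vc T-S\vd=\max_{y\in Y}\vc T_y-\alpha(y)\delta_{h_0(y)}\vd$, a maximum of finitely many strict inequalities, and is therefore strict.

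The principal obstacle is the functional-level stability bound (and its strict refinement) quoted above; once those are in hand, the reduction through the points of $D$ is essentially formal. The finiteness of $Y$ is genuinely used in the ``strict'' part, since an infinite supremum of strict inequalities need not be strict, which matches the split between the two assertions of the theorem.
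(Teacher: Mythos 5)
Your proof of the first assertion is, in substance, the paper's own argument: slice $T$ into the functionals $T_y$ at points of the discrete part $D$, approximate each $T_y$ by a weighted evaluation, extend the weight (bounded, hence extendable to $\beta D$) and the symbol (via the universal property of $\beta D$ and compactness of $X$), and promote the pointwise estimate on the dense set $D$ to all of $Y$ by continuity. The functional-level input you quote is not literally a stated theorem of Section~\ref{bound}, but it is easily assembled from what is there: for $\vc T_y \vd > 2\sqrt{\epsilon}$ use Corollary~\ref{L3} together with Lemma~\ref{pollo}, and for $\vc T_y \vd \le 2\sqrt{\epsilon}$ take the zero functional, which lies in $\wk$. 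So the first part is correct and coincides with the paper's route (the paper organizes the same choices through its sets $Z_0$ and $Z_1$).

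For the \emph{Furthermore} assertion there is a genuine gap: the ``strict refinement in Section~\ref{bound}'' you appeal to does not exist, and this is precisely the step where the hypothesis on $X$ must be used. Strictness is indeed automatic when $\vc T_y \vd > 2\sqrt{\epsilon}$ (Corollary~\ref{L3} plus the strict monotonicity in Lemma~\ref{pollo}, since $\gamma(2\sqrt{\epsilon}) = 2\sqrt{\epsilon}$) and trivial when $T_y = 0$; but when $0 < \vc T_y \vd \le 2\sqrt{\epsilon}$ the estimate via the zero functional is only $\le 2\sqrt{\epsilon}$, and if $\lambda_{T_y}$ is atomless with $\vc T_y \vd = 2\sqrt{\epsilon}$ the distance to $\wk$ is exactly $2\sqrt{\epsilon}$, so no strict bound can hold without a further argument. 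The missing point is: under the hypothesis, $\lambda_{T_y}$ cannot be atomless unless it is zero, since otherwise $\va \lambda_{T_y} \vb / \vc T_y \vd$ would be an atomless regular Borel probability measure on $X$; hence there is an atom $x_y$, and choosing $\alpha(y) := \lambda_{T_y}(\{x_y\})$ gives, as in Lemma~\ref{llata}, $\vc T_y - \alpha(y)\delta_{x_y} \vd = \vc T_y \vd - \va \lambda_{T_y}(\{x_y\}) \vb < \vc T_y \vd \le 2\sqrt{\epsilon}$. Note also that this choice must be built into the approximants (rather than taking $0$ at such points, as your first-part construction allows); since $Y = D$ is finite here, redefining $S$ accordingly poses no extension problem, and then your finite-maximum argument correctly yields the strict inequality. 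With this two-line supplement — which is exactly the content of the paper's remark that, under the hypothesis, $Z \setminus (Z_0 \cup Z_1)$ consists only of points with $\vc T_z \vd = 0$ — your proof is complete.
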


The proofs of both results are given in Section~\ref{reallyfar}.

\begin{rem}
The property of admitting an atomless regular Borel probability (or, equivalently, complex and nontrivial) measure can be characterized in purely topological terms. A compact Hausdorff space admits such a  measure if and only if is {\em scattered} (see \cite[Theorem 19.7.6]{S}).
\end{rem}

\section{Main results III: How far and how close. The case when $X$ is finite}\label{leffe}

Next we study the case when $X$ is finite. Here, the best instability bounds depend on the sequence $(\omega_n)$,
and the cardinality of $Y$ does not play any r\^{o}le as
long as it is at least $2$.
We  define  $o'_X :  (0, 1/4) \ra \mathbb{R}$, for every finite set $X$ (recall that we are assuming $\ca X \ge 2$). We put
\begin{displaymath}
o'_X (\epsilon)  := \left\{ \begin{array}{rl} 2 \sqrt{\frac{(n-1)\epsilon}{n+1}} & \mbox{if }  n := \ca X \mbox{ is odd and } \epsilon \le \omega_n
\\
\frac{n-1}{n} & \mbox{if }  n := \ca X \mbox{ is odd and } \epsilon > \omega_n
\\
\frac{2(n-1)  \sqrt{\epsilon}}{n}  & \mbox{if }  n := \ca X \mbox{ is even}
\end{array} \right.
\end{displaymath}

\begin{thm}\label{recero}
Let $0 < \epsilon < 1/4$. Assume that $Y$ has at least two points, and that $X$ is finite.
Then there exists
$T \in \dg$ with $\vc T\vd =1$ such that
$$B \pl T,  o'_X (\epsilon ) \pr \cap \wc = \emptyset.$$
\end{thm}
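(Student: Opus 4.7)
The plan is to prove Theorem~\ref{recero} by exhibiting, for each finite $X$ with $n := \ca X$, an operator $T \in \dg$ of norm $1$ whose distance to $\wc$ is at least $o'_X(\epsilon)$. The construction proceeds in two steps: first design a ``spread'' functional $\varphi_0 \in \dk$ that is as far from $\wk$ as the disjointness constraint permits, and then lift $\varphi_0$ to an operator $T \colon C(X) \to C(Y)$, using the hypothesis $\ca Y \ge 2$ to attain the unit norm.

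Write $X = \tl x_1,\ldots,x_n \tr$ and set $\varphi_0 := \alpha \sum_{i=1}^n \delta_{x_i}$, where $\alpha := 2\sqrt{\epsilon}/n$ if $n$ is even, $\alpha := 2\sqrt{\epsilon}/\sqrt{n^2-1}$ if $n$ is odd with $\epsilon \le \omega_n$, and $\alpha := 1/n$ if $n$ is odd with $\epsilon > \omega_n$. The extremal pair of disjoint, sup-norm-$1$ functions is $f = \xi_A$, $g = \xi_B$ with $\va A\vb \cdot \va B\vb = \lfloor n/2\rfloor\lceil n/2\rceil$, so $\va \varphi_0(f)\varphi_0(g) \vb \le \alpha^2\lfloor n/2\rfloor\lceil n/2\rceil \le \epsilon$ in each regime, i.e.\ $\varphi_0 \in \dk$. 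The norm $\vc \varphi_0 \vd = n\alpha$ satisfies $\vc \varphi_0 \vd \le 1$, with equality precisely when $n$ is odd and $\epsilon \ge \omega_n$. Since the closest elements of $\wk$ to $\varphi_0$ are the functionals $\alpha\delta_{x_j}$ (one for each $j$), a short direct calculation yields
\[ \mathrm{dist}(\varphi_0,\wk) = (n-1)\alpha = o'_X(\epsilon). \]

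To lift $\varphi_0$ to $T$: if $\vc \varphi_0 \vd = 1$, set $(Tf)(y) := \varphi_0(f)$ for all $y \in Y$. Otherwise, pick distinct points $y_0, y_1 \in Y$ and, by Urysohn's lemma in the normal space $Y$, obtain $u,v \in C(Y)$ with $u(y_1) = v(y_0) = 1$, $0 \le u, v \le 1$, and disjoint supports, so that $uv \equiv 0$. Define
\[ (Tf)(y) := u(y)\varphi_0(f) + v(y)\,f(x_1). \]
At each $y$, only one of $u(y),v(y)$ is nonzero, so $T_y$ equals either $u(y)\varphi_0$ (of norm $\le \vc\varphi_0\vd < 1$) or $v(y)\delta_{x_1}$ (of norm $\le 1$); unit norm is attained at $y_0$, where $T_{y_0} = \delta_{x_1}$. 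Because $uv \equiv 0$ kills the cross term and $fg \equiv 0$ kills the $v^2$ term, one computes
\[ (Tf)(y)(Tg)(y) = u(y)^2\varphi_0(f)\varphi_0(g) \]
for disjoint $f,g$ of sup norm $1$, whose absolute value is at most $u(y)^2\epsilon \le \epsilon$; hence $T \in \dg$.

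For the lower bound on the distance, let $S \in \wc$ with $(Sf)(y) = a(y) f(h(y))$. Then $S_{y_1}$ (equal to $a(y_1)\delta_{h(y_1)}$, or $0$ if $a(y_1) = 0$) lies in $\wk$, and $T_{y_1} = \varphi_0$, so
\[ \vc T - S \vd \ge \vc T_{y_1} - S_{y_1} \vd = \vc \varphi_0 - S_{y_1} \vd \ge \mathrm{dist}(\varphi_0,\wk) = o'_X(\epsilon), \]
yielding $B(T,o'_X(\epsilon)) \cap \wc = \emptyset$. The main obstacle is the correct piecewise choice of $\alpha$ matching the formula for $o'_X(\epsilon)$ together with verifying that it really achieves the distance $(n-1)\alpha$; once $\varphi_0$ is fixed, the lifting step is essentially forced by the disjoint-support trick, which is what prevents $\epsilon$-disjointness preservation from being spoiled when combining $\varphi_0$ with the evaluation $\delta_{x_1}$.
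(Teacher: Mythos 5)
Your proposal is correct and is essentially the paper's own construction: the paper's proof takes $T_y=\beta(y)\,\delta_{x_0}+\alpha(y)\int_X\cdot\,d\mu$ with $\mu$ the uniform measure on the $n$ points of $X$ and the same extremal constant (namely $\min\{2n\sqrt{\epsilon}/\sqrt{n^2-1},1\}$ for $n$ odd, $2\sqrt{\epsilon}$ for $n$ even), which is exactly your $v(y)\,\delta_{x_1}+u(y)\,\varphi_0$ with bumps of disjoint support at two points of $Y$. The only cosmetic difference is in the lower bound, where the paper tests $T-S$ at $y_0$ with a function vanishing at $h(y_0)$ and equal to $1$ elsewhere, while you use $\|T-S\|\ge\|T_{y_1}-S_{y_1}\|\ge \mathrm{dist}\,(\varphi_0,\mathbf{WCM}(X,\mathbb{K}))=(n-1)\alpha=o'_X(\epsilon)$; these amount to the same computation.
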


The next result says that Theorem~\ref{recero} provides a sharp bound, and gives a whole family of spaces $Y$
for which the same one is  a bound for stability as well.  As we can see in Example~\ref{llegomalenayamigos2}, our  requirement on these $Y$ is not superfluous.

\begin{thm}\label{cero}
Let $0 < \epsilon < 1/4$. Suppose that $Y$ is the  Stone-\v{C}ech compactification of a discrete space with at least two points, and that $X$  is  finite.
Let $T \in \dg$ with $\vc  T\vd =1$.
Then
$$\overline{B} \pl T,  o'_X ( \epsilon ) \pr \cap \wc \neq \emptyset.$$
\end{thm}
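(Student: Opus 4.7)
The plan is to exploit the Stone-\v{C}ech structure of $Y$ to reduce the problem to the functional case developed in Sections~\ref{bound} and \ref{ninguno}, and then glue the pointwise approximations into a single weighted composition map. Let $D$ be the discrete space with $Y = \beta D$, so that $C(Y) \cong \ell^{\infty}(D)$ via restriction. For each $d \in D$ the functional $T_d \in C(X)'$ lies in $\dk$, and $\vc T \vd = \sup_{d \in D} \vc T_d \vd = 1$. Since $X$ is finite, $C(X)'$ is finite-dimensional, so the norm and weak-$\ast$ topologies coincide there and $y \mapsto T_y$ is a norm-continuous map $Y \ra C(X)'$; the analogous statement will hold for any weighted composition map $S$ we construct.

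First I would, for each $d \in D$, invoke the functional version of the stability theorem (Theorem~\ref{martakno}) to produce $\alpha_d \in \mathbb{K}$ and $x_d \in X$ with $\vc T_d - \alpha_d \delta_{x_d} \vd \le o'_X(\epsilon)$. When $\vc T_d \vd < 1$, one must apply the functional result to $T_d / \vc T_d \vd$, which is $(\epsilon/\vc T_d\vd^2)$-disjointness preserving, and then verify that after rescaling the resulting distance remains bounded by $o'_X(\epsilon)$. This uniform control over functionals of arbitrary norm $\le 1$ is the main technical obstacle, because the piecewise definition of $o'_X$ (odd/even $n$, and the threshold $\omega_n$) forces a careful case split: for small $\vc T_d \vd$ the normalized operator may land in a different branch of the formula, and one needs to know that the bound coming out of Theorem~\ref{martakno} is monotone with respect to the scaling, or to rerun the functional argument in an inhomogeneous version.

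Next I would assemble the pointwise data. Define $a : D \ra \mathbb{K}$ by $a(d) := \alpha_d$ and $h : D \ra X$ by $h(d) := x_d$. The estimates above give $|\alpha_d| \le \vc T_d \vd + o'_X(\epsilon) \le 1 + o'_X(\epsilon)$, so $a$ is bounded and extends to an element of $C(\beta D)$. For the extension of $h$, observe that $h$ partitions $D$ into the clopen pieces $D_j := h^{-1}(x_j)$, $j=1,\dots,\ca X$; since disjoint subsets of a discrete space have disjoint closures in its Stone-\v{C}ech compactification, the closures $\cl D_j$ in $\beta D$ are pairwise disjoint and clopen, so setting $\tilde h \equiv x_j$ on $\cl D_j$ yields a continuous extension $\tilde h : \beta D \ra X$. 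Then $(Sf)(y) := a(y) f(\tilde h(y))$ defines a weighted composition map with $S_d = \alpha_d \delta_{x_d}$ for every $d \in D$.

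Finally, I would conclude by a continuity and density argument: by construction $\vc T_d - S_d \vd \le o'_X(\epsilon)$ for every $d \in D$; since $y \mapsto T_y - S_y$ is norm-continuous into the finite-dimensional space $C(X)'$, the set of $y \in Y$ on which $\vc T_y - S_y \vd \le o'_X(\epsilon)$ is closed, and it contains the dense subset $D$, so it equals $Y$. Hence $\vc T - S \vd = \sup_{y \in Y} \vc T_y - S_y \vd \le o'_X(\epsilon)$, which places $S \in \wc$ inside $\overline{B}(T, o'_X(\epsilon))$, as required. The heart of the proof is thus the pointwise functional approximation in Step~1; Steps~2 and 3 are essentially formal consequences of the universal property of $\beta D$ and of $X$ being finite.
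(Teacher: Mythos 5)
Your overall architecture coincides with the paper's: approximate each functional $T_d$, $d\in D$, by a weighted evaluation, use the clopen fibers of $h$ and the universal property of $\beta D$ to extend $a$ and $h$ continuously, and conclude by density. Steps 2 and 3 of your proposal are sound and essentially identical to what the paper does.

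The gap is in Step 1, which you yourself label ``the main technical obstacle'' and then leave unresolved --- and that step is precisely where all the content of the theorem lies. Invoking Theorem~\ref{martakno} for the normalized functional $T_d/\vc T_d\vd$ and rescaling gives the bound $\vc T_d \vd \, o_X\pl \epsilon/\vc T_d\vd^2\pr$, which (i) is not defined at all when $\vc T_d\vd \le 2\sqrt{\epsilon}$, since then $\epsilon/\vc T_d\vd^2\ge 1/4$, and (ii) even where it is defined, wanders through different branches and intervals $\mathbb{A}_m$ as $\vc T_d\vd$ varies, so the asserted uniform bound by $o'_X(\epsilon)$ (note: $o'_X$, not $o_X$) is exactly what must be proved, not a formality. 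The paper handles this by combining three ingredients for each $z$: the trivial maximal-atom estimate $\va (Tf)(z)-(Sf)(z)\vb \le (n-1)\vc T_z\vd /n$ (which covers $\vc T_z\vd\le 2\sqrt{\epsilon}$, since $2(n-1)\sqrt{\epsilon}/n \le o'_X(\epsilon)$); the bound $\vc T_z\vd -\sqrt{\vc T_z\vd^2-4\epsilon}$ from Corollary~\ref{L3} for $\vc T_z\vd > 2\sqrt{\epsilon}$; and, when $n=\ca X$ is even, the sharper atom estimate of Proposition~\ref{ksis}, giving $\vc T_z\vd - \pl \vc T_z\vd+\sqrt{\vc T_z\vd^2-4\epsilon}\pr /n$. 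It then shows by an elementary but genuinely case-dependent comparison of the functions $\gamma(t)=t-\sqrt{t^2-4\epsilon}$, $\delta(t)=(n-1)t/n$ and $\eta(t)=t-\pl t+\sqrt{t^2-4\epsilon}\pr /n$ on $[2\sqrt{\epsilon},1]$ that the minimum of the available bounds never exceeds $o'_X(\epsilon)$; this is where the odd/even dichotomy and the threshold $\omega_n$ in the definition of $o'_X$ actually come from. Also note a compatibility issue your gluing hides: the point $x_d$ must be chosen so that a single formula (the paper takes the atom of maximal mass, so that all three estimates apply to the same $h(z)$) works simultaneously in all norm regimes; choosing $x_d$ separately from Theorem~\ref{martakno} in one regime and ad hoc in another is fine only because $h$ is being defined pointwise on a discrete set, but the quantitative claim for each single $d$ still has to be established. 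Until that inhomogeneous pointwise estimate is written out, the proof is incomplete.
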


The instability bounds  are special when the space $X$ is finite. In the following theorem we study  the stability bounds in this particular case. Example~\ref{llegomalenayamigos2} shows that the result  is sharp.

\begin{thm}\label{opodel}
Let $0 < \epsilon < 1/4$. Suppose that
that $X$  is  finite, and let $T \in \dg$  with $\vc  T\vd =1$.
Then
$$\overline{B} \pl T, 2 \sqrt{\epsilon} \pr \cap \wc \neq \emptyset.$$
\end{thm}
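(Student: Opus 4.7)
The plan is to analyze $T$ pointwise via its action on the standard basis of $C(X)$ and then stitch the pointwise data into a single weighted composition map. Write $X=\{x_1,\ldots,x_n\}$ and set $b_i:=T\xi_{\{x_i\}}\in C(Y)$, so that $(Tf)(y)=\sum_{i}f(x_i)b_i(y)$ and the dual norm satisfies $\|T_y\|_{C(X)'}=S(y):=\sum_i|b_i(y)|$. Testing the $\epsilon$-disjointness preserving condition on suitably signed indicator pairs for disjoint $A,B\subseteq X$ yields, for every $y\in Y$ and every $A\subseteq X$, the quadratic inequality $P_A(y)(S(y)-P_A(y))\le\epsilon$, where $P_A(y):=\sum_{x_i\in A}|b_i(y)|$. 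Consequently, whenever $S(y)\ge 2\sqrt{\epsilon}$, every subset sum avoids the open interval $(s(y),L(y))$ whose endpoints are the roots $s,L=(S\pm\sqrt{S^2-4\epsilon})/2$ of $t^2-S(y)t+\epsilon=0$.

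The pointwise selection bound I establish is $S(y)-M(y)\le 2\sqrt{\epsilon}$, where $M(y):=\max_i|b_i(y)|$. If $S(y)\le 2\sqrt{\epsilon}$ this is trivial. Otherwise, sort the $|b_i(y)|$ in decreasing order and consider the prefix sums $P_k=|b_1(y)|+\cdots+|b_k(y)|$. Since $P_0=0\le s(y)$ and $P_n=S(y)\ge L(y)$, there is a first $k$ with $P_{k-1}\le s(y)<P_k$; the forbidden-interval constraint forces $P_k\ge L(y)$, while $P_k\le s(y)+|b_k(y)|\le s(y)+M(y)$. Combining yields $M(y)\ge L(y)-s(y)$, hence $S(y)-M(y)\le 2s(y)\le 2\sqrt{\epsilon}$.

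For the global construction, define $V_i:=\{y:|b_i(y)|>\sqrt{\epsilon}\}$; these are open, and $V_i\cap V_j=\emptyset$ for $i\neq j$ by the DP inequality $|b_i(y)b_j(y)|\le\epsilon$. On $V_i$ set $h(y):=x_i$ (extended arbitrarily to $Y\setminus V$ with $V:=\bigsqcup_i V_i$), and put $\tau_i(y):=b_i(y)\bigl(1-\sqrt{\epsilon}/|b_i(y)|\bigr)^2$ on $V_i$ with $\tau_i\equiv 0$ elsewhere. Each $\tau_i$ is continuous on $Y$ (vanishing as $|b_i|\to\sqrt{\epsilon}$ from above), and since the supports of distinct $\tau_i$ are disjoint, $a:=\sum_i\tau_i\in C(Y)$. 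The operator $S$ given by $(Sf)(y):=a(y)f(h(y))$ is a weighted composition map (with $h$ continuous on $c(a)\subset V$). A direct calculation on each $V_i$ using $1-(1-\sqrt{\epsilon}/|b_i|)^2=2\sqrt{\epsilon}/|b_i|-\epsilon/|b_i|^2$ and the DP bound $S(y)-|b_i(y)|\le\epsilon/|b_i(y)|$ gives
\[\|T_y-S_y\|_{C(X)'}=|b_i(y)-\tau_i(y)|+\sum_{j\neq i}|b_j(y)|=\bigl(2\sqrt{\epsilon}-\epsilon/|b_i(y)|\bigr)+\bigl(S(y)-|b_i(y)|\bigr)\le 2\sqrt{\epsilon}.\]

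The main obstacle is the behavior on $Y\setminus V$, where $a\equiv 0$ forces $\|T_y-S_y\|=S(y)$: when every $|b_i(y)|\le\sqrt{\epsilon}$, the greedy analysis above gives only $S(y)\le 3\sqrt{\epsilon/2}$, which can strictly exceed $2\sqrt{\epsilon}$ at ``case II'' points where all $|b_i(y)|\le s(y)$ simultaneously. To close this gap I plan to enlarge the partition $\{V_i\}$ to also capture those points at which a single index unambiguously dominates even below threshold $\sqrt{\epsilon}$, combined with a Urysohn-type construction that forces $a$ to vanish exactly on the closed locus where ties compel $h$ to switch indices; the pointwise functional bound from Theorem~\ref{martakno} guarantees that at every $y$ some $(\alpha,x_i)$ achieves distance at most $2\sqrt{\epsilon}$, and the delicate technical point is making this selection continuously compatible with the WCM constraint that $h$ be continuous on $c(a)$.
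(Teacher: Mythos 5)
Your work on $V:=\bigcup_i V_i$ is correct (the weight $\tau_i(y)=b_i(y)(1-\sqrt{\epsilon}/|b_i(y)|)^2$ does give error exactly $2\sqrt{\epsilon}-\epsilon/|b_i(y)|+\sum_{j\neq i}|b_j(y)|\le 2\sqrt{\epsilon}$ on $V_i$), but as you concede, this is not yet a proof: on $Y\setminus V$ your map gives error $\|T_y\|=\sum_i|b_i(y)|$, which for $\mathrm{card}\,X\ge 3$ can genuinely exceed $2\sqrt{\epsilon}$ (three atoms of modulus $\sqrt{\epsilon/2}$ give an $\epsilon$-disjointness preserving functional of norm $3\sqrt{\epsilon/2}>2\sqrt{\epsilon}$, and such functionals do occur as $T_y$'s, e.g.\ in the operators built in the proof of Theorem~\ref{recero}), and the closing ``plan'' is a statement of intent, not an argument. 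Moreover the plan aims at the wrong obstruction. The paper shows that inside $Y_{2\sqrt{\epsilon}}=\{y:\|T_y\|>2\sqrt{\epsilon}\}$ there is no ``tie locus'' at all: for each $C\subset X$, the set $A_C$ of points where $C$ is a minimal set carrying at least half of $\|T_y\|$ is open \emph{and closed} in $Y_{2\sqrt{\epsilon}}$, because Lemma~\ref{pol-immemoriam} forbids subset sums from entering the interval between $(\|T_y\|-\sqrt{\|T_y\|^2-4\epsilon})/2$ and $(\|T_y\|+\sqrt{\|T_y\|^2-4\epsilon})/2$, so ``$\ge\|T_y\|/2$'' is automatically strict. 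Hence $h$ can be chosen locally constant, therefore continuous, on all of $Y_{2\sqrt{\epsilon}}$, with $|\lambda_{T_y}(\{h(y)\})|\ge\sqrt{\|T_y\|^2-4\epsilon}$ there; no Urysohn-type vanishing at interior switching points is needed, and in fact forcing $a(y)=0$ at any point with $\|T_y\|>2\sqrt{\epsilon}$ would already destroy the bound, since the error at such a point is $\|T_y\|$.

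The second missing idea is how to glue across the boundary $\|T_y\|=2\sqrt{\epsilon}$. Your cut by the coordinatewise threshold $|b_i(y)|>\sqrt{\epsilon}$ is the wrong one in both directions: near the boundary of $Y_{2\sqrt{\epsilon}}$ the dominant atom is only guaranteed mass $\sqrt{\|T_y\|^2-4\epsilon}$, which tends to $0$, and conversely all coordinates can be $\le\sqrt{\epsilon}$ while $\|T_y\|>2\sqrt{\epsilon}$. The paper cuts by the total norm and damps the selected atom by $\alpha(y)=\sqrt{(\|T_y\|-2\sqrt{\epsilon})/(\|T_y\|+2\sqrt{\epsilon})}$, extended by $0$ off $Y_{2\sqrt{\epsilon}}$; since $y\mapsto\|T_y\|$ is continuous ($X$ finite), $a=\alpha\,\mathbf{b}$ is continuous on $Y$, and the estimate $|(Tf)(y)-\alpha(y)\mathbf{b}(y)f(h(y))|\le\|T_y\|-\alpha(y)\sqrt{\|T_y\|^2-4\epsilon}=2\sqrt{\epsilon}$ holds on $Y_{2\sqrt{\epsilon}}$, while off that set the error is $\|T_y\|\le 2\sqrt{\epsilon}$. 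Your pointwise inequality $\|T_y\|-\max_i|b_i(y)|\le 2\sqrt{\epsilon}$ is true but does not by itself yield a continuous selection, and Theorem~\ref{martakno} cannot substitute for one. Until you produce a continuous choice of $h$ on the whole set $\{\|T_y\|>2\sqrt{\epsilon}\}$ together with a weight vanishing continuously at its boundary without losing the $2\sqrt{\epsilon}$ estimate, the proposal has a genuine gap.
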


Theorems~\ref{recero} and \ref{cero} are proved in Section~\ref{cerilla}, and Theorem~\ref{opodel} in Section~\ref{myrna}.

\section{Main results IV: The case of continuous linear functionals}\label{puntodencuentro}

In some of the previous results, we assume that the space $Y$ has at least two points.
Of course the case when $Y$ has just one point can be viewed as the study of  continuous linear functionals.
In this section we give the  best stability and instability bounds in this case, and see that both bounds coincide.  Here we do not require $X$ to be finite, and Theorem~\ref{martakno} is valid both for $X$ finite and infinite.  Anyway, the result depends on the sequence $(\omega_n)$ and its relation to the cardinal of $X$.

We first introduce the map $o_X : (0, 1/4) \ra \mathbb{R}$ as follows: For $n \in \mathbb{N}$ and  $\epsilon \in \mathbb{A}_n $,
$$o_X (\epsilon)  := \left\{ \begin{array}{rl} \frac{2n-1 -  \sqrt{1 - 4 \epsilon}}{2n} & \mbox{if }   2 n \le \ca X
\\
\frac{k-1 - \sqrt{1 -4 \epsilon}}{k} & \mbox{if } k := \ca X < 2 n \mbox{ and } k \mbox{ is even}
\\
 \frac{k-1}{k} & \mbox{if } k := \ca X < 2 n \mbox{ and } k \mbox{ is odd}
\end{array} \right. $$

We  use this map to give a bound both for stability and instability (see Section~\ref{ninguno} for the proof).

\begin{thm}\label{martakno}
Let $0 < \epsilon < 1/4$.
If $\varphi \in \dk$ and $\vc  \varphi \vd =1$, then $$\overline{B} \pl \varphi, o_X (\epsilon) \pr \cap \wk \neq \emptyset.$$

On the other hand, there exists $\varphi \in \dk$ with $\vc  \varphi \vd =1$ such that
$$B \pl \varphi,  o_X (\epsilon) \pr \cap \wk = \emptyset.$$
\end{thm}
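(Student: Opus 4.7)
The plan is to translate the hypothesis on $\varphi$ into an inequality about its representing measure $\lambda = \lambda_\varphi$ and then analyze the atomic structure this forces. The crucial preliminary step is to upgrade the $\epsilon$-disjointness preserving condition to the statement that $|\lambda|(A)\cdot|\lambda|(B) \le \epsilon$ for every pair of disjoint Borel sets $A, B \subset X$. This combines the polar decomposition $d\lambda = h\, d|\lambda|$ with $|h|=1$, a Lusin approximation of $\bar{h}$ by a continuous function of modulus at most one, regularity of $|\lambda|$ to locate compact approximants $K_A \subset A$, $K_B \subset B$ with disjoint open neighborhoods, and Urysohn cut-offs producing norm-one test functions $f, g$ of disjoint supports with $|\varphi(f)|, |\varphi(g)|$ arbitrarily close to $|\lambda|(A), |\lambda|(B)$.

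For the stability half, list the point masses of $|\lambda|$ as $c_1 \ge c_2 \ge \cdots$ and let $t$ denote the atomless mass, so $\sum_i c_i + t = 1$. Set $s = \sqrt{1-4\epsilon}$, $\alpha = (1-s)/2$, $\beta = (1+s)/2$. Applying the Borel-set inequality to the top $m$ atoms versus the complement gives $M_m(1-M_m) \le \epsilon$ where $M_m = c_1 + \cdots + c_m$, so each $M_m$ avoids the open gap $(\alpha, \beta)$; applying it to two halves of the atomless part yields $t \le \alpha$. Hence $M_\infty = 1-t \ge \beta$, forcing the partial sums to cross $(\alpha, \beta)$ at some smallest index $m^*$, with jump $c_{m^*} \ge \beta - \alpha = s$. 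Combining $(m^*-1) c_{m^*} \le M_{m^*-1} \le \alpha$ with this lower bound on $c_{m^*}$ yields $m^* \le (1+s)/(2s)$, and hence $m^* \le n$ precisely when $\epsilon \in \mathbb{A}_n$. Choosing $\psi = \lambda(\{x_1\}) \delta_{x_1}$ gives $\vc \varphi - \psi \vd = 1 - c_1$, and the three branches of $o_X(\epsilon)$ emerge: (i) if $\ca X \ge 2n$, then $c_1 \ge M_{m^*}/m^* \ge \beta/n$, so $1-c_1 \le (2n-1-s)/(2n)$; (ii) if $\ca X = N < 2n$ with $N$ even, splitting the $N$ atoms into top and bottom halves yields a top-half mass $\ge 1/2 > \alpha$, hence $\ge \beta$, whence $c_1 \ge 2\beta/N = (1+s)/N$; (iii) if $N < 2n$ is odd, the pigeonhole bound $c_1 \ge 1/N$ is trivial.

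For the instability half, I would exhibit three explicit extremal atomic functionals tailored to the branches of $o_X$: (i) on $2n$ distinct points of $X$, take $2n-1$ atoms of mass $(1+s)/(2n)$ together with one atom of mass $(1-(2n-1)s)/(2n)$; (ii) on the $N$ points of $X$, take $N-1$ atoms of mass $(1+s)/N$ and one atom of mass $(1-(N-1)s)/N$; (iii) equidistribute mass $1/N$ on all $N$ points. In every case $c_1$ equals $1 - o_X(\epsilon)$ and the best $\psi \in \wk$ attains $\vc \varphi - \psi \vd = 1 - c_1 = o_X(\epsilon)$. Membership in $\dk$ reduces to checking that every subset sum of atoms avoids $(\alpha, \beta)$. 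The main obstacle is precisely this combinatorial verification: in (i)--(ii) the sum of the first $k$ big atoms hits $\alpha$ exactly when $k$ reaches half the number of bigs, and the inequalities that ensure no sum lands strictly in $(\alpha,\beta)$ reduce to $(2n-1)s \le 1$ (respectively $(N-1)s \le 1$), which is precisely the defining condition $\epsilon \ge \omega_{2n-1}$ of $\mathbb{A}_n$; in (iii) the largest subset-sum product equals $\omega_N \le \omega_{2n-1} \le \epsilon$. A secondary technical point is the careful passage from the $\epsilon$-DP hypothesis to its total-variation version on arbitrary disjoint Borel sets when $\mathbb{K} = \mathbb{C}$, where the polar phase must be approximated via Lusin and regularity.
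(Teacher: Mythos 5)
Your overall strategy is the same as the paper's: everything is reduced to producing one atom of $|\lambda_\varphi|$ of mass at least $1-o_X(\epsilon)$ and taking $\psi=\lambda_\varphi(\{x\})\delta_x$, and your three instability examples are exactly the ones in the paper's proof (the $(1+s)/(2n)$-atoms plus a small remainder atom, its $k$-point even analogue, and the uniform measure on an odd number of points), verified the same way, with positivity and the subset-sum check both reducing to $(2n-1)\sqrt{1-4\epsilon}\le 1$, i.e.\ $\epsilon\ge\omega_{2n-1}$. Where you genuinely differ is in how the large atom is found: the paper's Proposition about even finite $X$ counts how many point masses can lie in the middle interval, and its Proposition for $\mathrm{card}\, X\ge 2n$ runs a covering/regularity argument in which running measures of unions of open sets would otherwise cross the forbidden gap; you instead decompose $|\lambda_\varphi|$ into atoms plus an atomless part and track the partial sums $M_m$, getting $c_{m^*}\ge s$ at the crossing index and $m^*\le (1+s)/(2s)\le n$. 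That route is clean and in places slicker (your ``top half has mass at least $1/2$, hence at least $\beta$'' argument gives the even-cardinality bound more directly than the paper's counting).

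There is, however, one step you must repair: the claim that ``applying it to two halves of the atomless part yields $t\le\alpha$.'' Pairing one half (mass $t/2$) with its complement in $X$ only gives $t/2\notin(\alpha,\beta)$, hence $t\le 2\alpha=1-s$; pairing the two halves with each other only gives $t\le 2\sqrt{\epsilon}$. Combined with $t\notin(\alpha,\beta)$ this forces $t\le\alpha$ only when $1-s<\beta$, i.e.\ $\epsilon<2/9$ ($n=1$); for $\epsilon\ge 2/9$, which is exactly the regime where the fine structure of $o_X$ matters, the argument as stated does not exclude $t\in[\beta,1-s]$. The fix is standard and uses a fact you are already implicitly invoking in order to ``split into halves'': an atomless finite regular Borel measure attains every value in $[0,t]$ on Borel subsets (Sierpi\'nski/Lyapunov), so if $t>\alpha$ some Borel subset of the atomless part has measure inside the open gap $(\alpha,\beta)$, contradicting gap-avoidance; alternatively, substitute the paper's covering argument, which needs only regularity and avoids the atomic/atomless decomposition altogether. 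With that correction your stability half is complete, and your instability half coincides with the paper's.
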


\begin{rem}\label{nadenas}
Sometimes   the information given by  the number $\epsilon$ is redundant,  in that
$\epsilon $ is too "big" with respect to the cardinal of $X$.
 This  happens for instance when $X$ is a set of  $k$ points,
where $k \in \mathbb{N}$ is odd.
This is the reason why the definition of  $o_X$ (and that of $o'_X$) does not necessarily depend on $\epsilon$.
\end{rem}

\section{The bounds $1/4$ and  $2/9$ for continuous linear functionals}\label{bound}

We start with a lemma that will be broadly used.

\begin{lem} \label{pol-immemoriam}
Let $0 < \epsilon <1/4$. Let $\varphi \in \dk$ be  positive with 
$\vc \varphi\vd  =1$. If $C$ is  a Borel subset of $X$, then \[\lambda_{\varphi} (C)\notin
\left(\frac{1-\sqrt{1-4\epsilon}}{2},\frac{1+\sqrt{1-4\epsilon}}{2}\right).\]
\end{lem}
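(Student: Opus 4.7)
The plan is to show that $\alpha := \lambda_{\varphi}(C)$ must satisfy $\alpha(1-\alpha) \le \epsilon$, which is equivalent (by solving the quadratic inequality $\alpha^2 - \alpha + \epsilon \ge 0$) to $\alpha$ lying outside the claimed open interval. Since $\varphi$ is positive of norm $1$, the representing measure $\lambda_{\varphi}$ is a regular Borel probability measure, so $\lambda_{\varphi}(X \setminus C) = 1 - \alpha$. The boundary values $\alpha = 0$ and $\alpha = 1$ already lie outside the open interval, so I may assume $\alpha \in (0,1)$.

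The key construction is to turn the mass estimates on $C$ and $X \setminus C$ into disjoint test functions in $C(X)$ and then apply the $\epsilon$-disjointness condition. Given any $\delta > 0$ small enough that $\alpha - \delta > 0$ and $1 - \alpha - \delta > 0$, inner regularity of $\lambda_{\varphi}$ produces disjoint compact sets $K \subseteq C$ and $L \subseteq X \setminus C$ with $\lambda_{\varphi}(K) > \alpha - \delta$ and $\lambda_{\varphi}(L) > 1 - \alpha - \delta$. By Urysohn's lemma applied to these disjoint compact subsets of the compact Hausdorff space $X$, I can pick $f, g \in C(X)$ with $0 \le f, g \le 1$, $f \equiv 1$ on $K$, $g \equiv 1$ on $L$, and (by shrinking supports if necessary) $fg \equiv 0$. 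Since $\varphi$ is positive, integration against $\lambda_{\varphi}$ gives
\[
\varphi(f) \ge \lambda_{\varphi}(K) > \alpha - \delta, \qquad \varphi(g) \ge \lambda_{\varphi}(L) > 1 - \alpha - \delta.
\]

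Now I invoke the equivalent form of the $\epsilon$-disjointness condition noted after Definition~\ref{djp}: since $\vc f \vd_{\infty}, \vc g \vd_{\infty} \le 1$ and $fg \equiv 0$, I have $\varphi(f)\varphi(g) \le \epsilon$. Combining with the lower bounds yields $(\alpha - \delta)(1 - \alpha - \delta) \le \epsilon$. Letting $\delta \to 0^+$ gives $\alpha(1-\alpha) \le \epsilon$, and solving the resulting quadratic inequality yields $\alpha \le (1 - \sqrt{1 - 4\epsilon})/2$ or $\alpha \ge (1 + \sqrt{1 - 4\epsilon})/2$, which is the desired conclusion.

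No step is really an obstacle: the only point to be careful about is ensuring the Urysohn functions have disjoint zero-sets so that $fg \equiv 0$ (one can take $f$ to vanish on a neighbourhood of $L$ and $g$ to vanish on a neighbourhood of $K$), and checking that the $\epsilon$-disjointness hypothesis is used in its homogeneous form since the functions constructed have norm at most (not exactly) $1$.
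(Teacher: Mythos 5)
Your proposal is correct and follows essentially the same route as the paper: inner regularity to get compact sets $K\subseteq C$ and $L\subseteq X\setminus C$ capturing almost all the mass, Urysohn functions with disjoint supports, and the $\epsilon$-disjointness condition to force $\lambda_{\varphi}(C)\bigl(1-\lambda_{\varphi}(C)\bigr)\le\epsilon$. The only cosmetic difference is that the paper fixes one $\delta$ small enough to reach a direct contradiction, while you let $\delta\to 0^{+}$; this is immaterial.
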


\begin{proof}
 Suppose, contrary to what we claim, that there is a Borel subset $C$
such that $
\left(1-\sqrt{1-4\epsilon} \right)/2 < \lambda_{\varphi} (C) < \left( 1+\sqrt{1-4\epsilon}\right)/2$. This implies that $\lambda_{\varphi} (C) ( 1 - \lambda_{\varphi} (C)) > \epsilon$ and, consequently, we can find
$\delta>0$ with 
$(\lambda_{\varphi} (C) -\delta)(1-  \lambda_{\varphi} (C) -\delta)>\epsilon$.

By the regularity of the measure, there exist two compact subsets,
$K_1$ and $K_2$, such that $K_1\subset C$ and
$K_2\subset X\setminus C$ and, furthermore,
$\lambda_{\varphi} (K_1) > \lambda_{\varphi} (C)-\delta$ 
and
$\lambda_{\varphi} (K_2) > 1-\lambda_{\varphi} (C)-\delta$.

On the other hand, let us choose two disjoint open subsets $U$ and
$V$ of $X$ such that $K_1\subset U$ and $K_2\subset V$. By
Urysohn's lemma, we can find two functions $f_1$ and $f_2$ in
$C(X)$ such that $0\le f_1 \le 1$, $0\le f_2 \le 1$, $f_1\equiv 1$
on $K_1$, $f_2\equiv 1$ on $K_2$, ${\rm supp} (f_1)\subset U$ and
${\rm supp}(f_2)\subset V$. Clearly, $f_1 f_2 \equiv 0$ and
\[\varphi(f_i)=\int_X f_i d\lambda_{\varphi}  \ge \lambda_{\varphi}(K_i)\]
for $i = 1, 2$. 
Besides, $\vc f_1\vd_{\infty}=\vc f_2\vd_{\infty}=1$. However,
\[\va \varphi(f_1) \vb \va \varphi(f_2)\vb\ge
(\lambda_{\varphi} (C)-\delta)((1-\lambda_{\varphi} (C)-\delta)>\epsilon,\] which
contradicts the $\epsilon$-disjointness preserving property of $\varphi$, and we are done.
\end{proof}

If $\varphi \in C(X)'$, then let us  define
\[\va \varphi \vb (f):=\int_{X}fd \va\lambda_{\varphi}  \vb = 
 \int_{X} f  \overline{ \frac{ d \lambda_{\varphi}}{d \va\lambda_{\varphi}  \vb  }  }  d \lambda_{\varphi}   \]
 for every $f \in C(X)$.

\begin{lem}\label{sintelefarri}
Given $\varphi \in C(X)'$, $\va \varphi \vb $ is a positive linear functional on  $ C(X)$
with $\vc \va \varphi \vb\vd  = \vc \varphi \vd$.
Moreover, if
$\epsilon >0$ and  $\varphi \in \dk$, then $\va \varphi \vb \in \dk$ and $\lambda_{\va \varphi \vb} = \va \lambda_{\varphi} \vb$.
\end{lem}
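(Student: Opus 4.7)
The plan is to use the polar decomposition of the measure $\lambda_\varphi$ together with a Lusin-type approximation to transport the $\epsilon$-disjointness preserving property from $\varphi$ to $|\varphi|$.

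First, I would dispose of the easy parts. Writing $d\lambda_\varphi = h\, d|\lambda_\varphi|$ for the polar decomposition, where $h$ is Borel measurable with $|h|=1$ $|\lambda_\varphi|$-a.e., one has $d|\lambda_\varphi| = \bar h \, d\lambda_\varphi$, which immediately justifies the second equality in the definition of $|\varphi|(f)$. Linearity follows directly from linearity of the integral, and positivity is clear since $|\lambda_\varphi|$ is a positive measure. The identity $\||\varphi|\| = \|\varphi\|$ reduces to $\||\lambda_\varphi|\| = |\lambda_\varphi|(X) = \|\lambda_\varphi\|$, which is standard. By uniqueness in the Riesz representation theorem, $\lambda_{|\varphi|} = |\lambda_\varphi|$.

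The nontrivial statement is that $|\varphi|$ inherits the $\epsilon$-disjointness preserving property. Fix $f, g \in C(X)$ with $\|f\|_\infty = \|g\|_\infty = 1$ and $fg\equiv 0$. The idea is to approximate the measurable function $\bar h$ by a continuous $u$ with $|u|\le 1$ using Lusin's theorem: for any $\delta>0$, choose continuous $u: X\to \mathbb{K}$ with $|u|\le 1$ such that the set $E_\delta := \{x\in X : u(x)\neq \bar h(x)\}$ satisfies $|\lambda_\varphi|(E_\delta) < \delta$. Define $\tilde f := f u$ and $\tilde g := g u$, both continuous functions with sup-norm at most $1$, and crucially $\tilde f \tilde g = fg\cdot u^2 \equiv 0$. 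Using the equivalent form of the $\epsilon$-disjointness preserving condition recalled in Definition~\ref{djp}, we obtain $|\varphi(\tilde f)\,\varphi(\tilde g)| \le \epsilon \|\tilde f\|_\infty \|\tilde g\|_\infty \le \epsilon$.

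Finally I would compare $\varphi(\tilde f)$ with $|\varphi|(f)$. We have
\[
|\varphi|(f) - \varphi(\tilde f) \;=\; \int_X f\,\bar h\, d\lambda_\varphi - \int_X f u\, d\lambda_\varphi \;=\; \int_X f(\bar h - u)\,d\lambda_\varphi,
\]
and since $\bar h - u$ vanishes off $E_\delta$ and is bounded by $2$,
\[
|\,|\varphi|(f) - \varphi(\tilde f)\,| \;\le\; 2|\lambda_\varphi|(E_\delta) \;<\; 2\delta,
\]
and similarly for $g$. Letting $\delta\to 0$ gives $|\,|\varphi|(f)\,|\varphi|(g)| \le \epsilon$, completing the proof.

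The main obstacle is the measurability versus continuity gap in the second equality defining $|\varphi|$: the density $\bar h$ is only Borel, so we cannot feed $f\bar h$ directly into $\varphi$. The Lusin approximation bypasses this while preserving disjointness of supports (this is automatic because multiplication by the \emph{same} $u$ on both $f$ and $g$ keeps $\tilde f \tilde g\equiv 0$), and the truncation ensuring $|u|\le 1$ is the version of Lusin standard for bounded measurable functions.
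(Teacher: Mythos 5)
Your proof is correct and follows essentially the same route as the paper: approximate the conjugate Radon--Nikodym density $\overline{d\lambda_{\varphi}/d\va\lambda_{\varphi}\vb}$ by norm-bounded continuous functions via Lusin's theorem, note that multiplying the disjoint pair $f,g$ by the same approximant keeps the product identically zero, and pass to the limit. The only difference is cosmetic (a single $\delta$-approximant instead of the paper's sequence $(k_n)$), and you usefully spell out the limiting estimate that the paper leaves as "easily deduced."
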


\begin{proof}
The first part is apparent. As for the second part, using Lusin's Theorem (see \cite[p. 55]{Ru}), we can find a sequence $\pl k_n \pr$ in $C(X)$ such that $$\lim_{n \ra \infty} \int_X \va k_n - 
\overline{ \frac{ d \lambda_{\varphi}}{d \va\lambda_{\varphi}  \vb  }  } \vb  d \va \lambda_{\varphi} \vb= 0,$$
and $\vc k_n \vd_{\infty} \le 1$ for every $n \in \mathbb{N}$. This implies that, for all $f \in C(X)$, $\va \varphi \vb (f) = \lim_{n \ra \infty} \varphi \pl f k_n \pr$, and we can easily deduce that $\va \varphi \vb $
 is $\epsilon$-disjointness preserving. It is also clear that $\lambda_{\va \varphi \vb} = \va \lambda_{\varphi} \vb$.
\end{proof}

\begin{lem} \label{L1}
Let $0 < \epsilon <1/4$. Let $\varphi \in \dk$,  
$\vc \varphi\vd  =1$. Then there exists $x \in X$ with
\[\va \lambda_{\varphi} (\{x\}) \vb \ge \sqrt{1-4\epsilon}.\]
Furthermore, if $0 < \epsilon <2/9$, then there exists a unique $x\in X$ with
\[\va \lambda_{\varphi} (\{x\}) \vb \ge \frac{1+\sqrt{1-4\epsilon}}{2}.\]
\end{lem}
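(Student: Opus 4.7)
The plan is to reduce to the positive case and then exploit the forbidden interval furnished by Lemma~\ref{pol-immemoriam}. By Lemma~\ref{sintelefarri}, the functional $|\varphi|$ lies in $\dk$, is positive, has norm~$1$, and $\lambda_{|\varphi|} = |\lambda_\varphi|$; since $|\lambda_\varphi|(\{x\}) = |\lambda_\varphi(\{x\})|$ for a single point, it suffices to prove both assertions for a positive $\varphi$. So write $\lambda := \lambda_\varphi$, a regular Borel probability measure, and set $\alpha := (1 - \sqrt{1-4\epsilon})/2$, $\beta := (1+\sqrt{1-4\epsilon})/2$, so that $\alpha + \beta = 1$, $\alpha\beta = \epsilon$, $\beta - \alpha = \sqrt{1-4\epsilon}$. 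Lemma~\ref{pol-immemoriam} then states that $\lambda(C) \in [0,\alpha]\cup[\beta,1]$ for every Borel $C$.

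For the first assertion I argue by contradiction: suppose $\lambda(\{x\}) < \beta - \alpha$ for every $x\in X$. Decompose $\lambda = \lambda_{\rm at} + \mu$ with $\lambda_{\rm at}$ purely atomic and $\mu$ atomless. Since $\mu$ is atomless and finite, iterated bisection (using that an atomless finite measure attains every value up to its total mass) lets me partition it into finitely many Borel pieces each of $\mu$-mass strictly less than $\beta-\alpha$. Combining these with the (countably many) atoms of $\lambda$ gives a countable partition $P_1,P_2,\ldots$ of a set of full $\lambda$-measure such that $\lambda(P_i) < \beta-\alpha$ for every $i$. Setting $C_n := P_1\cup\cdots\cup P_n$, we have $\lambda(C_n)\nearrow 1$; pick the least $n^*$ with $\lambda(C_{n^*}) > \alpha$. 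Then $\lambda(C_{n^*-1}) \le \alpha$, whence
\[
\lambda(C_{n^*}) \;=\; \lambda(C_{n^*-1}) + \lambda(P_{n^*}) \;<\; \alpha + (\beta-\alpha) \;=\; \beta,
\]
so $\lambda(C_{n^*})\in(\alpha,\beta)$, contradicting Lemma~\ref{pol-immemoriam}. Hence some $x\in X$ satisfies $\lambda(\{x\}) \ge \sqrt{1-4\epsilon}$.

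For the second assertion assume $\epsilon < 2/9$; then $\sqrt{1-4\epsilon} > 1/3$, which yields $\alpha < 1/3 < \sqrt{1-4\epsilon} = \beta - \alpha$. The atom $x$ produced above therefore satisfies $\lambda(\{x\}) \ge \beta - \alpha > \alpha$, and Lemma~\ref{pol-immemoriam} applied to $C = \{x\}$ upgrades this to $\lambda(\{x\}) \ge \beta = (1+\sqrt{1-4\epsilon})/2$. Uniqueness is immediate: if two distinct points both had mass $\ge \beta > 1/2$, their combined mass would exceed $\lambda(X) = 1$.

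The main obstacle is Step~3, in particular arranging that every piece of the partition has mass \emph{strictly} below $\beta-\alpha$ (so that the crossing estimate lands inside the open interval $(\alpha,\beta)$) while covering a set of full $\lambda$-measure. Once atomic and atomless parts are handled in a single partition this is routine, but it is the only nontrivial ingredient; the rest follows mechanically from Lemmas~\ref{pol-immemoriam} and~\ref{sintelefarri}.
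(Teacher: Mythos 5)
Your proof is correct, and the second half (upgrading the atom to mass at least $(1+\sqrt{1-4\epsilon})/2$ via the forbidden interval, plus the ``two points would exceed total mass $1$'' uniqueness argument) coincides with the paper's. For the first assertion, however, you take a genuinely different route. The paper stays elementary: assuming every point has mass $<\sqrt{1-4\epsilon}$, it uses outer regularity to put each point inside an open set of measure $<\sqrt{1-4\epsilon}$, extracts a finite subcover by compactness of $X$, and observes that the measures of the partial unions would have to jump across the open interval forbidden by Lemma~\ref{pol-immemoriam}, which is impossible. You instead split $\lambda$ into its atomic and atomless parts, chop the atomless part into finitely many pieces of mass $<\sqrt{1-4\epsilon}$ by the Sierpi\'nski-type intermediate-value property of nonatomic finite measures, and run the same ``crossing the gap'' argument on cumulative unions of the resulting countable partition. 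This works, but it imports more machinery, and it silently uses the standard fact that every atom of a regular Borel measure on a compact Hausdorff space is concentrated at a single point (otherwise your hypothesis ``every point has small mass'' would not control the atomic pieces, and ``no point masses'' would not by itself justify applying the intermediate-value property to the remainder); the paper's covering argument needs nothing beyond regularity and compactness and so sidesteps this issue. Both approaches reduce to the positive case in the same way, via Lemma~\ref{sintelefarri} and $\lambda_{\va \varphi \vb}=\va \lambda_{\varphi}\vb$.
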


\begin{proof} 
Let $0 < \epsilon <1/4$. We prove the result  first for positive functionals. Suppose that for every $x \in X$, 
$\lambda_{\varphi} (\{x\})<   \sqrt{1-4\epsilon}$. For each $x \in X$, take an open neighborhood $U(x)$ of $x$ with 
$\lambda_{\varphi} \pl U(x) \pr<   \sqrt{1-4\epsilon}$. Since $X$ is compact,  we can find $x_1, x_2,  \ldots, x_n$ in 
$X$ such that $ X = U (x_1) \cup U(x_2) \cup \cdots \cup U(x_n)$. Let $r_1:= \lambda_{\varphi} (U(x_1))$, 
$r_2:= \lambda_{\varphi} (U(x_1) \cup U(x_2))$, \ldots, $r_n:= \lambda_{\varphi} (U(x_1) \cup U(x_2) \cup \cdots \cup U(x_n)) $, 
and suppose without loss of generality that $r_1 <r_2 < \cdots <r_n =1$. By Lemma~\ref{pol-immemoriam}, 
 $r_1 \le \pl 1-\sqrt{1-4\epsilon} \pr/2$, and we can take 
$i_0 := \max \tl i : r_i \le \pl 1-\sqrt{1-4\epsilon}\pr /2 \tr$. We then see that $r_{i_{0+1}}$  belongs to 
$\pl \pl 1-\sqrt{1-4\epsilon} \pr  /2, \pl 1+\sqrt{1-4\epsilon} \pr / 2\pr$, against Lemma~\ref{pol-immemoriam}. This proves the 
first part of the lemma for positive functionals. 

If $\varphi$ is not positive, then we use Lemma~\ref{sintelefarri}, and from the above paragraph we have that there exists  $x \in X$ such that
\[\va \lambda_{\varphi}   (\{x\}) \vb = \va \lambda_{\varphi}  \vb (\{x\})\ge \sqrt{1-4\epsilon}.\]

As for the second part, if follows immediately from Lemma~\ref{pol-immemoriam} and the fact that   $\pl 1-\sqrt{1-4\epsilon}\pr /2 < \sqrt{1-4\epsilon}$ for $0< \epsilon < 2/9$. Finally, if there exist two different points $x_1 , x_2$ such that
$\va \lambda_{\varphi} (\{x_i\}) \vb \ge \pl 1+\sqrt{1-4\epsilon}\pr / 2$ ($i =1,2$), then
$\va \lambda_{\varphi} \vb (\{x_1,x_2\})\ge 1+\sqrt{1-4\epsilon}>1 $, against our assumptions. This completes the proof.
\end{proof}

\begin{lem} \label{llata}
Let $0 < \epsilon <1/4$. Let $\varphi \in \dk$,  $\vc \varphi\vd  =1$. Then
there exists  $x \in X$ with
\[\vc \varphi- \lambda_{\varphi} (\{x\})\delta_{x}\vd   \le 1 - \sqrt{1-4\epsilon}.\]
Furthermore, if  $0 < \epsilon <2/9$, then
there exists a unique $x\in X$ with
\[\vc \varphi- \lambda_{\varphi} (\{x\})\delta_{x}\vd   \le \frac{1-\sqrt{1-4\epsilon}}{2}.\]
\end{lem}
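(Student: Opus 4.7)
The plan is to reduce the statement to Lemma~\ref{L1} by observing that the norm of the functional $\varphi - \lambda_{\varphi}(\{x\})\delta_x$ is easy to compute exactly: the representing measure is $\lambda_{\varphi}$ with its point mass at $x$ removed, so its total variation equals $|\lambda_{\varphi}|(X \setminus \{x\})$.

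First I would record the key identity. Since $\{x\}$ is an atom with $|\lambda_{\varphi}|(\{x\}) = |\lambda_{\varphi}(\{x\})|$, and the representing measure of $\varphi - \lambda_{\varphi}(\{x\})\delta_x$ coincides with $\lambda_{\varphi}$ on $X \setminus \{x\}$ and vanishes on $\{x\}$, we get
\[
\vc \varphi - \lambda_{\varphi}(\{x\})\delta_x \vd \;=\; \va \lambda_{\varphi} \vb (X \setminus \{x\}) \;=\; \va \lambda_{\varphi} \vb (X) - \va \lambda_{\varphi}(\{x\}) \vb \;=\; 1 - \va \lambda_{\varphi}(\{x\})\vb,
\]
where I use $\vc \varphi \vd = \va \lambda_{\varphi}\vb (X) = 1$ (which follows from Lemma~\ref{sintelefarri}, or the standard Riesz representation).

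Next I would invoke the first part of Lemma~\ref{L1} to select $x \in X$ with $\va \lambda_{\varphi}(\{x\})\vb \ge \sqrt{1-4\epsilon}$. Plugging into the identity gives $\vc \varphi - \lambda_{\varphi}(\{x\})\delta_x \vd \le 1-\sqrt{1-4\epsilon}$, which is the first bound.

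For the second bound (when $\epsilon < 2/9$), I would apply the second part of Lemma~\ref{L1}, which gives a unique $x \in X$ with $\va \lambda_{\varphi}(\{x\})\vb \ge (1+\sqrt{1-4\epsilon})/2$. The identity immediately yields
\[
\vc \varphi - \lambda_{\varphi}(\{x\})\delta_x \vd \;\le\; 1 - \frac{1+\sqrt{1-4\epsilon}}{2} \;=\; \frac{1-\sqrt{1-4\epsilon}}{2}.
\]
Uniqueness transfers for free: if some $x' \ne x$ also satisfied the inequality, then by the identity $\va \lambda_{\varphi}(\{x'\})\vb \ge (1+\sqrt{1-4\epsilon})/2$ as well, contradicting the uniqueness clause of Lemma~\ref{L1}. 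There is no serious obstacle here; the only thing to verify carefully is the total-variation identity for a point-mass subtraction, but this is standard for Borel regular measures on a compact Hausdorff space.
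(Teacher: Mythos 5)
Your proposal is correct and follows essentially the same route as the paper: the paper's proof rests on exactly the identity $1=\vc\varphi\vd=\va\lambda_{\varphi}(\{x\})\vb+\vc\varphi-\lambda_{\varphi}(\{x\})\delta_{x}\vd$ (total variation splits off the atom at $x$) and then invokes both parts of Lemma~\ref{L1}. Your additional remark that uniqueness transfers through this identity is just a spelled-out version of what the paper leaves implicit.
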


\begin{proof} It is easy to see that  
\begin{eqnarray*}
 1 &=& \vc \varphi \vd \\ &=& \vc \lambda_{\varphi} (\{x\})\delta_{x} \vd + \vc \varphi- \lambda_{\varphi} (\{x\})\delta_{x}\vd \\
&=& \va \lambda_{\varphi} (\{x\}) \vb + \vc \varphi- \lambda_{\varphi} (\{x\})\delta_{x}\vd , 
\end{eqnarray*}
and the conclusion follows from Lemma~\ref{L1}.
\end{proof}

\begin{cor}\label{L3}
Let $0 < \epsilon <1/4$. Suppose that $\varphi \in \dk$ and
\[2 \sqrt{\epsilon}<\vc \varphi \vd  \le 1.\]
Then there exists $x \in X$ such that
\[
\vc \varphi - \lambda_{\varphi}  (\{x\})\delta_{x}\vd   \le
\vc \varphi \vd  -\sqrt{\vc \varphi \vd^2-4\epsilon}.
\]
Furthermore, if $0 < \epsilon <2/9$ and $\sqrt{9\epsilon / 2}<\vc \varphi \vd  \le 1$, then there exists a unique $x \in X$ such that 
\[
\vc \varphi - \lambda_{\varphi}  (\{x\})\delta_{x}\vd   \le
\frac{\vc \varphi \vd  -\sqrt{\vc \varphi \vd^2-4\epsilon}}{2}.
\]
\end{cor}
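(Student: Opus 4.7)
The plan is to reduce to Lemma~\ref{llata} by a simple renormalization argument. Write $c := \vc \varphi \vd$ and $\psi := \varphi / c$, so $\vc \psi \vd = 1$. As noted in the discussion following Definition~\ref{djp}, if $\varphi$ is $\epsilon$-disjointness preserving, then $\psi$ is $\epsilon' $-disjointness preserving, where $\epsilon' := \epsilon / c^{2}$. Also $\lambda_{\psi} = \lambda_{\varphi}/c$, so in particular $\lambda_{\psi}(\{x\}) = \lambda_{\varphi}(\{x\})/c$ for every $x \in X$.

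For the first assertion, the hypothesis $2 \sqrt{\epsilon} < c$ is exactly what we need to guarantee $\epsilon' < 1/4$, so Lemma~\ref{llata} applies to $\psi$ and $\epsilon'$. It produces some $x \in X$ with
\[
\vc \psi - \lambda_{\psi}(\{x\}) \delta_{x} \vd \le 1 - \sqrt{1 - 4 \epsilon'} = 1 - \sqrt{1 - 4 \epsilon / c^{2}}.
\]
Multiplying through by $c$ (which pulls inside the norm since $\psi = \varphi/c$ and $\lambda_{\psi}(\{x\}) \delta_{x} = \lambda_{\varphi}(\{x\}) \delta_{x}/c$) yields
\[
\vc \varphi - \lambda_{\varphi}(\{x\}) \delta_{x} \vd \le c - \sqrt{c^{2} - 4 \epsilon} = \vc \varphi \vd - \sqrt{\vc \varphi \vd^{2} - 4 \epsilon},
\]
as desired.

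For the sharper second assertion, the hypothesis $\sqrt{9 \epsilon / 2} < c$ translates to $\epsilon' < 2/9$, so the second part of Lemma~\ref{llata} applies to $\psi$ and yields a unique $x \in X$ with
\[
\vc \psi - \lambda_{\psi}(\{x\}) \delta_{x} \vd \le \frac{1 - \sqrt{1 - 4 \epsilon / c^{2}}}{2}.
\]
Multiplying by $c$ as before gives the stated bound $\bigl(\vc \varphi \vd - \sqrt{\vc \varphi \vd^{2} - 4 \epsilon}\bigr) / 2$. Uniqueness transfers: if two distinct points satisfied this inequality for $\varphi$, dividing each side by $c$ would yield two distinct points satisfying the corresponding inequality for $\psi$, contradicting the uniqueness part of Lemma~\ref{llata}. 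There are no substantive obstacles here; the only thing to check carefully is that the conditions $2\sqrt{\epsilon} < c$ and $\sqrt{9\epsilon/2} < c$ really do correspond to $\epsilon' < 1/4$ and $\epsilon' < 2/9$ respectively, which is immediate.
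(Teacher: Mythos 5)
Your proposal is correct and follows essentially the same route as the paper: renormalize to $\varphi/\vc\varphi\vd$, note that it is $\epsilon/\vc\varphi\vd^2$-disjointness preserving with $\epsilon/\vc\varphi\vd^2<1/4$ (resp.\ $<2/9$), apply Lemma~\ref{llata}, and scale back. The verification that the norm hypotheses translate into the required bounds on the rescaled $\epsilon$ and that uniqueness transfers is exactly the content of the paper's (brief) argument.
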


\begin{proof}
Let $0 < \epsilon < 1/4$. It is apparent that $\varphi/ \vc \varphi\vd  $ has norm $1$ and is
$\epsilon/ \vc \varphi \vd^2$-disjointness preserving. Besides
$\epsilon/\vc \varphi \vd^2 < \epsilon / \left( 2 \sqrt{\epsilon}\right)^2 = 1/4.$

Hence, by Lemma \ref{llata}, there exists  $x \in X$ with
\[\vc  \frac{\varphi}{\vc \varphi\vd  } - \lambda_{\frac{\varphi}{\vc \varphi \vd  }} (\{x\})\delta_{x}\vd   \le
1-\sqrt{1-\frac{4\epsilon}{\vc  \varphi \vd^2}}\]
and we are done. The proof of the second part is similar.
\end{proof}

\begin{cor}\label{zapatillas1966}
Let $0 < \epsilon <2/9$. Suppose that $\varphi  \in \dk$ and $ \sqrt{9\epsilon/2}<\vc \varphi \vd  \le 1$, and that $x \in X$ is the point given in Corollary~\ref{L3}.
Then
\[ \sqrt{\vc \varphi \vd^2-4\epsilon} \le \left| \varphi (f) \right| \]
whenever $f \in C(X)$ satisfies $\va f(x)\vb =1 = \vc f \vd_{\infty}$.
\end{cor}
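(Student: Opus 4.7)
The plan is to use the triangle inequality on the natural decomposition $\varphi = \lambda_\varphi(\{x\})\delta_x + \bigl(\varphi - \lambda_\varphi(\{x\})\delta_x\bigr)$ and balance a lower bound on the mass at $x$ against an upper bound on the remainder, both of which are essentially already established.

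First I would write, for any $f \in C(X)$ with $\va f(x) \vb = 1 = \vc f \vd_\infty$,
\[
\va \varphi(f) \vb \ge \va \lambda_\varphi(\{x\}) f(x) \vb - \va \pl \varphi - \lambda_\varphi(\{x\}) \delta_x \pr (f) \vb \ge \va \lambda_\varphi(\{x\}) \vb - \vc \varphi - \lambda_\varphi(\{x\}) \delta_x \vd,
\]
since $\va f(x) \vb = 1$ in the first term and $\vc f \vd_\infty = 1$ controls the functional norm in the second.

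Next I would bound the two quantities on the right. For the remainder, the second (sharper) part of Corollary~\ref{L3} applies directly under the hypothesis $\sqrt{9\epsilon/2} < \vc \varphi \vd \le 1$, giving
\[
\vc \varphi - \lambda_\varphi(\{x\}) \delta_x \vd \le \frac{\vc \varphi \vd - \sqrt{\vc \varphi \vd^2 - 4\epsilon}}{2}.
\]
For the point mass, I would apply the second part of Lemma~\ref{L1} to the rescaled functional $\varphi / \vc \varphi \vd$, which is of norm $1$ and $\epsilon/\vc \varphi \vd^2$-disjointness preserving (and $\epsilon/\vc \varphi \vd^2 < 2/9$ thanks to the hypothesis $\sqrt{9\epsilon/2} < \vc \varphi \vd$). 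This yields a point $x'$ with $\va \lambda_\varphi(\{x'\}) \vb / \vc \varphi \vd \ge \pl 1 + \sqrt{1 - 4\epsilon/\vc \varphi \vd^2} \pr / 2$; but this same condition, applied through Lemma~\ref{llata} and its use in Corollary~\ref{L3}, forces $x' = x$, so
\[
\va \lambda_\varphi(\{x\}) \vb \ge \frac{\vc \varphi \vd + \sqrt{\vc \varphi \vd^2 - 4\epsilon}}{2}.
\]

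Substituting both estimates into the inequality for $\va \varphi(f) \vb$ gives the conclusion $\sqrt{\vc \varphi \vd^2 - 4\epsilon} \le \va \varphi(f) \vb$ by direct subtraction. The only subtle point, and the one I would take care to justify carefully, is that the $x$ selected in Corollary~\ref{L3} coincides with the point of large mass provided by Lemma~\ref{L1}; this identification is essentially the content of the uniqueness clauses in Lemma~\ref{L1} and Lemma~\ref{llata}, where the assumption $\epsilon < 2/9$ (rescaled) ensures that $\pl 1 - \sqrt{1-4\epsilon}\pr / 2 < \sqrt{1 - 4\epsilon}$, so that only one point can carry mass at the larger threshold. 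With that matching secured, the rest is pure arithmetic and the triangle inequality.
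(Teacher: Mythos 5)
Your proposal is correct and follows essentially the same route as the paper: decompose $\varphi$ at the atom $x$, bound the remainder by the second part of Corollary~\ref{L3}, bound the mass $\va \lambda_{\varphi}(\{x\})\vb$ from below via Lemma~\ref{L1} (rescaled), and subtract. The only difference is cosmetic: you make explicit the identification of the Corollary~\ref{L3} point with the large-mass point of Lemma~\ref{L1}, which the paper leaves implicit through the chain Lemma~\ref{L1}--Lemma~\ref{llata}--Corollary~\ref{L3}.
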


\begin{proof}
Let $f \in C(X)$ be such that $\va f(x)\vb =1 = \vc f \vd_{\infty}$.
By Corollary \ref{L3}, we have that
\[\left| \left| \varphi (f) \right| -  \left| \lambda_{\varphi}  (\{x\}) \right| \right| \le  \left|(\varphi- \lambda_{\varphi}  (\{x\})\delta_{x}) (f)\right| \le
\frac{\vc \varphi\vd  -\sqrt{\vc \varphi\vd  ^2-4\epsilon}}{2}.\] Hence, by applying
Lemma \ref{L1}
\begin{eqnarray*}
 \left| \varphi (f) \right| &\ge& \left| \lambda_{\varphi}  (\{x\}) \right| - \frac{\vc \varphi\vd  -\sqrt{\vc \varphi\vd  ^2-4\epsilon}}{2}\\
&\ge& \frac{\vc \varphi\vd   + \sqrt{\vc \varphi\vd^2-4\epsilon}}{2}- \frac{\vc \varphi\vd  -\sqrt{\vc \varphi\vd^2-4\epsilon}}{2}\\
&=& \sqrt{\vc \varphi\vd^2-4\epsilon}.
\end{eqnarray*}
\end{proof}

\section{The sequence $(\omega_n)$ for continuous linear functionals}\label{ninguno}

Recall that we have defined, for each $n \in \mathbb{N}$, 
 $$\omega_n := \frac{n^2 -1}{4 n^2}$$ 
and
\[\mathbb{A}_n: = \ql \omega_{2n-1}, \omega_{2n+1} \pr . \] 

The precise statement of the results in this section depends heavily  on the number $n$ such that $\epsilon \in \mathbb{A}_n$, and on the cardinality of $X$.

\smallskip

Suppose that $X$ is a finite set of $k$ elements, and that $\varphi \in C(X)'$ has norm $1$. Then it is immediate
 that there exists a point $x \in X$ with $\va \lambda_{\varphi} (\{x\}) \vb \ge 1/k$. We next see that this result can be sharpened when $k$ is even and $\varphi \in \dk$, and also when $X$ has "many" elements (being finite or infinite).

\begin{prop}\label{ksis}
Let $0 < \epsilon < 1/4$. 
Suppose that $X$ is a finite set of cardinal  $k \in 2 \mathbb{N}$.
If $\varphi \in \dk$ and $\vc  \varphi \vd =1$, then there exists  
$x \in X$ such that $$\va \lambda_{\varphi} (\{x\}) \vb \ge \frac{1 + \sqrt{1-4 \epsilon}}{k}.$$
\end{prop}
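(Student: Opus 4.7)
The plan is to reduce to the positive case and then exploit the symmetry of the even cardinality $k$ together with Lemma~\ref{pol-immemoriam}. First, using Lemma~\ref{sintelefarri}, I replace $\varphi$ by $\va \varphi \vb$: this is positive, still lies in $\dk$, still has norm $1$, and satisfies $\lambda_{\va \varphi \vb}(\{x\}) = \va \lambda_{\varphi}(\{x\}) \vb$ for each singleton $\{x\}$. Hence it suffices to prove the statement under the additional assumption that $\varphi$ is positive, in which case $\lambda_{\varphi}$ is a probability measure on the finite set $X$.

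Next, write $X = \{x_1, \ldots, x_k\}$ and set $p_i := \lambda_{\varphi}(\{x_i\}) \ge 0$, reordering so that $p_1 \ge p_2 \ge \cdots \ge p_k$ and $\sum_{i=1}^k p_i = 1$. Let $m := k/2$, which is an integer since $k$ is even, and set $A := \{x_1, \ldots, x_m\}$. Because $A$ collects the $m$ largest weights out of $k = 2m$, we have
\[
\lambda_{\varphi}(A) \ge \lambda_{\varphi}(X \setminus A),
\]
and therefore $\lambda_{\varphi}(A) \ge 1/2$.

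Now I would invoke Lemma~\ref{pol-immemoriam} with the Borel set $C = A$: since $\lambda_{\varphi}(A) \notin \bigl( (1-\sqrt{1-4\epsilon})/2,\ (1+\sqrt{1-4\epsilon})/2 \bigr)$, and since $\epsilon > 0$ gives $(1-\sqrt{1-4\epsilon})/2 < 1/2 \le \lambda_{\varphi}(A)$, the only possibility left is
\[
\lambda_{\varphi}(A) \ge \frac{1+\sqrt{1-4\epsilon}}{2}.
\]
Averaging the top $m$ weights then yields
\[
p_1 \ge \frac{p_1 + \cdots + p_m}{m} = \frac{\lambda_{\varphi}(A)}{m} \ge \frac{1+\sqrt{1-4\epsilon}}{2m} = \frac{1+\sqrt{1-4\epsilon}}{k},
\]
which is the desired bound with $x := x_1$.

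The one mildly delicate point is verifying that the pigeonhole jump across the forbidden interval really forces $\lambda_{\varphi}(A)$ to land above $\beta := (1+\sqrt{1-4\epsilon})/2$ rather than at or below $\alpha := (1-\sqrt{1-4\epsilon})/2$; this is precisely where the evenness of $k$ is essential, since it lets us split $X$ into two equal halves and conclude $\lambda_{\varphi}(A) \ge 1/2 > \alpha$. For odd $k$ such a clean split is unavailable, explaining why this sharper estimate is restricted to $k \in 2\mathbb{N}$.
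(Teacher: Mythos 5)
Your argument is correct. It rests on the same two pillars as the paper's own proof---Lemma~\ref{sintelefarri} to reduce to a positive functional, and Lemma~\ref{pol-immemoriam} to exclude the interval $\left(\frac{1-\sqrt{1-4\epsilon}}{2},\frac{1+\sqrt{1-4\epsilon}}{2}\right)$---but the combinatorial organization differs and is a bit cleaner. The paper first shows that there cannot be $m:=k/2$ points whose individual masses all lie in the scaled interval $\left(\frac{1-\sqrt{1-4\epsilon}}{k},\frac{1+\sqrt{1-4\epsilon}}{k}\right)$ (their sum would land in the forbidden interval), and then runs a case analysis on how many of the remaining points fall at or below the lower endpoint, concluding in the relevant case exactly as you do, by averaging a total mass of at least $\frac{1+\sqrt{1-4\epsilon}}{2}$ over $m$ points. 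You bypass both the counting step and the case split: sorting the point masses and taking the top half $A$ gives $\lambda_{\varphi}(A)\ge 1/2>\frac{1-\sqrt{1-4\epsilon}}{2}$ outright, so the gap lemma forces $\lambda_{\varphi}(A)\ge\frac{1+\sqrt{1-4\epsilon}}{2}$, and averaging over the $m$ points of $A$ finishes. The reduction step is handled correctly as well (for singletons $\va\lambda_{\varphi}\vb(\{x\})=\va\lambda_{\varphi}(\{x\})\vb$, so passing to $\va\varphi\vb$ loses nothing), and your closing remark correctly identifies the only place where evenness of $k$ enters, namely the clean split of $X$ into two halves of equal size.
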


\begin{proof}
By Lemma~\ref{sintelefarri}, we can assume without loss of generality that $\varphi$ is positive. Suppose that $k=2m$, $m \in \mathbb{N}$. 
Notice that there cannot be $m$ different points $x_1, \ldots, x_m \in X$ with \[\lambda_{\varphi} (\{x_i\}) \in \pl \frac{1 - \sqrt{1 -4 \epsilon}}{k} , \frac{1 + \sqrt{1 -4 \epsilon}}{k}\pr\] for every $i \in \{1, \ldots, m\}$, because otherwise $$\lambda_{\varphi}(\{x_1, \ldots, x_m \}) \in \pl \frac{1 - \sqrt{1 -4 \epsilon}}{2} , \frac{1 + \sqrt{1 -4 \epsilon}}{2}\pr,$$ against Lemma~\ref{pol-immemoriam}. This implies that there exist at least $m+1$ points whose measure belongs to $$\ql 0, \frac{1 - \sqrt{1 -4 \epsilon}}{k} \qr \cup \ql \frac{1 + \sqrt{1 -4 \epsilon}}{k}, 1 \qr . $$ Suppose that at least $m$ different points $x_1, \ldots, x_m \in X$ satisfy $\lambda_{\varphi}(\{x_i\}) \le \pl 1 - \sqrt{1 -4 \epsilon} \pr /k$. Then $\lambda_{\varphi}(\{x_1, \ldots, x_m\}) \le \pl 1 - \sqrt{1 -4 \epsilon} \pr / 2$, and consequently  we have that $\lambda_{\varphi}( X \setminus \{x_1, \ldots, x_m\}) \ge \pl 1 + \sqrt{1 -4 \epsilon} \pr / 2$. Since $X \setminus \{x_1, \ldots, x_m\}$ has $m$ points, this obviously implies that there exists $x \in X \setminus \{x_1, \ldots, x_m\}$ with $\lambda_{\varphi}(\{x\}) \ge  \pl 1 + \sqrt{1 -4 \epsilon} \pr / k$, and  we are done.
\end{proof}

 \begin{prop}\label{martopaz}
Let $0< \epsilon < 1/4$,  and let $n \in \mathbb{N}$ be such that $\epsilon \in \mathbb{A}_n $.
 Suppose that $\ca X \ge 2n$. 
If $\varphi \in \dk$ and $\vc  \varphi \vd =1$, then there exists  
$x \in X$ such that $$\va \lambda_{\varphi} (\{x\}) \vb \ge \frac{1 + \sqrt{1-4 \epsilon}}{2n}.$$
\end{prop}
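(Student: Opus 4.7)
My plan is to reduce to the positive case via Lemma~\ref{sintelefarri} and then argue by contradiction, assuming that $\lambda_\varphi(\{x\}) < (1+\alpha)/(2n)$ for every $x \in X$, where I set $\alpha := \sqrt{1-4\epsilon}$. Since $\sqrt{1-4\omega_m} = 1/m$, the hypothesis $\epsilon \in \mathbb{A}_n$ translates into $\alpha \in (1/(2n+1),\, 1/(2n-1)]$, which in particular yields the key inequality $(1-\alpha)/(2n) < \alpha$ that will drive the whole argument.

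First, in the spirit of Proposition~\ref{ksis}, I would bound the set of "medium" atoms $E := \{x \in X : (1-\alpha)/(2n) < \lambda_\varphi(\{x\}) < (1+\alpha)/(2n)\}$. If $E$ contained $n$ distinct points, the union of those singletons would be a Borel set whose measure lies strictly in $((1-\alpha)/2,\,(1+\alpha)/2)$, contradicting Lemma~\ref{pol-immemoriam}. Hence $|E| \le n-1$, and since $X$ is Hausdorff, $E$ is finite and closed.

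Next I would build a carefully calibrated open cover. By outer regularity, for each $x \in E$ I can pick an open $V(x) \ni x$ with $\lambda_\varphi(V(x)) < (1+\alpha)/(2n)$. For each $x \in X \setminus E$, the contradiction hypothesis together with the definition of $E$ forces $\lambda_\varphi(\{x\}) \le (1-\alpha)/(2n) < \alpha$, so outer regularity combined with the openness of $X \setminus E$ yields an open $V(x) \subset X \setminus E$ with $\lambda_\varphi(V(x)) < \alpha$. Extracting a finite subcover that includes $V(x)$ for each $x \in E$ and then disjointifying gives a Borel partition $\{B_i\}$ of $X$ consisting of at most $n-1$ ``$E$-pieces'' (each of measure $<(1+\alpha)/(2n)$) together with finitely many ``remaining pieces'' (each of measure $<\alpha$).

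Finally I would execute the counting argument: every subunion of the $B_i$'s is Borel, so its measure avoids the interval $I := ((1-\alpha)/2,\,(1+\alpha)/2)$ by Lemma~\ref{pol-immemoriam}. Accumulating the remaining pieces one at a time produces partial sums whose increments are strictly less than $\alpha$; if some partial sum exceeded $(1-\alpha)/2$, the first such one would land inside $I$, which is impossible. Thus the total mass of the remaining pieces is at most $(1-\alpha)/2$, forcing the $E$-pieces to collectively carry mass at least $(1+\alpha)/2$. But that mass is bounded above by $(n-1)(1+\alpha)/(2n) < (1+\alpha)/2$, the desired contradiction. The most delicate point, which I view as the main obstacle, is engineering the cover so that the non-medium pieces are bounded by $\alpha$ rather than merely by $(1+\alpha)/(2n)$; this is exactly what makes the ``no jump over $I$'' obstruction effective, and it depends on the strict inequality $(1-\alpha)/(2n) < \alpha$ that is built into the condition $\epsilon \in \mathbb{A}_n$.
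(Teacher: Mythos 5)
Your proposal is correct: the reduction to the positive case via Lemma~\ref{sintelefarri} is legitimate, the bound $\mathrm{card}\, E\le n-1$ follows exactly as in Proposition~\ref{ksis} from Lemma~\ref{pol-immemoriam}, the calibrated cover exists by outer regularity, and the ``no jump over the forbidden interval'' induction on the partial unions of the small pieces (increments $<\alpha=\sqrt{1-4\epsilon}$, gap of width $\alpha$) does force the remaining mass $\le(1-\alpha)/2$, so the at most $n-1$ medium pieces would have to carry $\ge(1+\alpha)/2$ while each carries $<(1+\alpha)/(2n)$ --- a genuine contradiction, with the key inequality $(1-\alpha)/(2n)<\alpha$ correctly traced to $\epsilon<\omega_{2n+1}$. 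The paper proves the same statement with the same two pillars (Lemma~\ref{pol-immemoriam} plus $\sqrt{1-4\epsilon}>1/(2n+1)$) but with a different architecture: it is a direct argument, not a contradiction on the maximal atom. There one orders the atoms decreasingly, lets $\mathbb{J}$ be the initial segment whose cumulative mass $R$ stays below $1/2$ (so $R\le(1-\alpha)/2$), surrounds those atoms by an open set $U$ of mass $\le(1-\alpha)/2$, and runs the covering/jump argument on $X\setminus U$ to produce a point outside $U$ with atom $\ge\alpha$; by the ordering, every atom indexed by $\mathbb{J}$ is then $\ge\alpha$, which caps $\mathrm{card}\,\mathbb{J}$ at $n-1$, and finally $\sum_{i=1}^{m_0+1}|\lambda_\varphi(\{x_i\})|\ge(1+\alpha)/2$ yields $|\lambda_\varphi(\{x_1\})|\ge(1+\alpha)/(2n)$ directly. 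The paper's route identifies the extremal atom explicitly and gives the extra information that the top atoms are each $\ge\alpha$; your route avoids ordering the atoms and the set $\mathbb{J}$ altogether by importing the medium/small dichotomy of Proposition~\ref{ksis} into the non-atomic setting via the calibrated cover, at the price of an indirect argument and a slightly more delicate choice of neighborhoods. (Incidentally, like the paper's proof, yours never actually needs the hypothesis $\mathrm{card}\, X\ge 2n$, nor the lower bound $\epsilon\ge\omega_{2n-1}$.)
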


\begin{proof}
Let $D:= \{x \in X : \va \lambda_{\varphi} (\{x\}) \vb >0\}$. It is clear that $D$ is a countable set, and by  Lemma~\ref{L1} it is nonempty. Let $\mathbb{M} := \{1, \ldots, m\}$ if the cardinal of $D$ is $m \in \mathbb{N}$, and let $\mathbb{M} := \mathbb{N}$ otherwise. It is obvious that we may assume that $D= \{x_i : i \in \mathbb{M      }\}$ and that $\va \lambda_{\varphi} (\{x_{i+1} \}) \vb \le \va \lambda_{\varphi} (\{x_{i} \}) \vb $ for every $i$.

Next let \[\mathbb{J} := \tl  j \in \mathbb{M}: \sum_{i=1}^j \va \lambda_{\varphi} (\{x_{i} \}) \vb < \frac{1}{2} \tr  \]
and  \[R:= \sum_{i \in \mathbb{J}} \va \lambda_{\varphi} (\{x_{i} \}) \vb.\]

We have that $R \le 1/2$, and by Lemma~\ref{pol-immemoriam} applied to the functional associated to  $\va \lambda_{\varphi} \vb$,  we get $R <1 /2$. 
Take any open subset  $U$   of $X$ containing all $x_i$, $i \in \mathbb{J}$, such that $\va \lambda_{\varphi} \vb (U ) < 1/2$, that is, $\va \lambda_{\varphi} \vb (U ) \le \pl 1 - \sqrt{1-4 \epsilon} \pr /2$, and suppose that $\va \lambda_{\varphi} (\{x\}) \vb < \sqrt{1-4 \epsilon} $ for every $x \notin U$. Then there exist open sets $U_1, \ldots, U_l$ in $X$, $l \in \mathbb{N}$, such that $X = U \cup U_1 \cup \cdots \cup U_l$ and $\va \lambda_{\varphi} \vb (U_i) < \sqrt{1-4 \epsilon} $ for every $i$. If we consider, for $i \in \{1, \ldots, l\}$,  $b_i := \va \lambda_{\varphi} \vb \pl U \cup \bigcup_{j=1}^i U_j \pr$, then we see that there must be an index $i_0$ with $$b_{i_0} \in \pl \frac{1 - \sqrt{1-4 \epsilon}}{2} , 
\frac{1 + \sqrt{1-4 \epsilon}}{2} \pr,$$ which goes against Lemma~\ref{pol-immemoriam}.

We deduce that there exists $j \in \mathbb{M}$, $j \notin \mathbb{J}$, such that $\va \lambda_{\varphi} (\{x_{j} \}) \vb \ge \sqrt{1 - 4 \epsilon}$. By the way we have taken $D$, this implies that $\va \lambda_{\varphi} (\{x_{i} \}) \vb \ge \sqrt{1 - 4 \epsilon}$ for every $i \in \mathbb{J}$, and obviously $\mathbb{J}$ must be finite, say $\mathbb{J} = \{1 , \ldots, m_0\}$.

Let us see now that $m_0 \le n-1$.  We have that, since $\epsilon < \omega_{2n+1}$, then $\sqrt{1 - 4 \epsilon} > 1 / \pl 2n+1 \pr$, which implies that \[ n  \sqrt{1- 4 \epsilon} > \frac{1 - \sqrt{1- 4 \epsilon}}{2}.\]Consequently, if $m_0 \ge  n $, then we get 
\begin{eqnarray*}
R &=& 
\sum_{i=1}^{m_0} \va \lambda_{\varphi} (\{x_{i} \}) \vb \\ &\ge& n  \sqrt{1-4 \epsilon}
\\ &>& \frac{1 - \sqrt{1- 4 \epsilon}}{2},
\end{eqnarray*}
which is impossible, as we said above. We conclude that $m_0 \le  n-1 $. 

On the other hand, taking into account that  
\[\sum_{i =1}^{m_0 +1} \va \lambda_{\varphi} (\{x_{i} \}) \vb \ge \frac{1 + \sqrt{1 - 4 \epsilon}}{2},\]
we have that \[(m_0 +1) \va \lambda_{\varphi} (\{x_{1} \}) \vb \ge \frac{1 + \sqrt{1 - 4 \epsilon}}{2},\]
which implies that \[n \va \lambda_{\varphi} (\{x_{1} \}) \vb \ge \frac{1 + \sqrt{1 - 4 \epsilon}}{2}.\]
As a consequence we get $$\va \lambda_{\varphi } ( \{x_1\}) \vb \ge \frac{1 + \sqrt{1-4 \epsilon}}{2n},$$ and we are done.
\end{proof}

\begin{proof}[Proof of Theorem~\ref{martakno}]
Let us show the first part. By Propositions~\ref{ksis} (see also comment before it) and  \ref{martopaz}, there exists 
$x \in X$ with $\va \lambda_{\varphi} (\{x\}) \vb \ge 1-  o_X (\epsilon)$. If we define $\psi := \lambda_{\varphi} (\{x\}) \delta_x$, then we are done.

\smallskip
Let us now prove the second part. Suppose that $\epsilon$ belongs to $\mathbb{A}_n $, $n \in \mathbb{N}$. It is clear that this fact implies that 
$(2 n -1) \sqrt{1- 4 \epsilon} \le 1$.

If $\ca X \ge 2 n$, then  we can pick $2n$  distinct points $x_1, x_2, \ldots, x_{2n}$ in $X$, and  define the  map $\varphi \in C(X)'$ as  \[\varphi  := \frac{1 + \sqrt{1-4 \epsilon}}{2n} \pl \sum_{i=1}^{2n-1} \delta_{x_i} \pr + \frac{1 - (2n -1) \sqrt{1-4 \epsilon}}{2n} \hspace{.03in} \delta_{x_{2n}} . \] It is easy to see that $\varphi$ satisfies all the requirements.

To study the cases when $\ca X < 2n$, put  $X := \{x_1, \ldots, x_k\}$. 
Suppose first that $k$ is even. Since  $(2n -1) \sqrt{1- 4 \epsilon} \le 1$, we have $(k-1)   \sqrt{1- 4 \epsilon} < 1$. We can easily  see that if 
 we 
 define the map $\varphi$ as
 \[\varphi := \frac{1 + \sqrt{1- 4 \epsilon}}{k} \pl \sum_{i=1}^{k-1} \delta_{x_i} \pr + \frac{1 -(k-1) \sqrt{1- 4 \epsilon}}{k} \hspace{.03in} \delta_{x_k}  ,\] 
then we are done.

Suppose finally that $k$ is odd. It is clear that if we define 
\[\varphi :=  \frac{1}{k} \pl \sum_{i=1}^{k}  \delta_{x_i} \pr,\]
then $\varphi$ is a norm one element of $C(X)'$, and is $\omega_k$-disjointness preserving, which implies that it is $\epsilon$-disjointness preserving. It is also easy to see that $\vc \varphi - \psi \vd \ge 1- 1/k$ for every weighted evaluation functional $\psi$ on $C(X)$.
\end{proof}

\section{How close. The general case: Proofs}\label{paulus-15nov07}

Let  $0 < \epsilon
< 2/9$, and let  $T: C(X) \longrightarrow C(Y)$ be a norm one $\epsilon$-disjointness preserving operator. 
If we take any $y\in Y_{\sqrt{9\epsilon/2}}$, then
$T_{y} / \vc T_{y}\vd$  is a norm one 
$\epsilon/\vc T_y\vd^2$-disjointness preserving operator with
\[\frac{\epsilon}{\vc T_y\vd^2}<\frac{\epsilon}{\frac{9\epsilon}{2}}
=\frac{2}{9}.\]
 
By Lemma \ref{L1}, there exists a unique $x_y\in X$ such that
$\va \lambda_{T_y}(\{x_y\}) \vb > \vc T_y\vd  /2$.
Thus, we can define a map $h_T: Y_{\sqrt{9\epsilon/2}} \longrightarrow X$, in such a way that 
$\va \lambda_{T_y} (\{h_T (y) \}) \vb > \vc T_y \vd/2$ for each $y \in Y_{\sqrt{9\epsilon/2}}$.

These fact can be summarized in the following lemma.

\begin{lem}\label{nak}
Let  $0 < \epsilon
< 2/9$, and let $T \in \dg$ with $\vc  T\vd =1$. If $y \in Y_{\sqrt{9\epsilon/2}}$, then $$\va \lambda_{T_{y}} \vb (\{h_T (y)\}) \ge
 \frac{\vc T_{y}\vd  + \sqrt{\vc T_{y}\vd^2-4\epsilon}}{2}.$$
\end{lem}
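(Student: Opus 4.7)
The plan is to reduce the claim to the second part of Lemma~\ref{L1} by an elementary rescaling, since all the hard work has already been done there.

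First I would unpack the hypothesis: the condition $y\in Y_{\sqrt{9\epsilon/2}}$ means $\vc T_y\vd > \sqrt{9\epsilon/2}$, so $\epsilon / \vc T_y \vd^2 < 2/9$. Thus the rescaled functional $\varphi := T_y / \vc T_y \vd \in C(X)'$ has norm one, and the identity
\[\va \varphi(f) \vb \va \varphi(g) \vb \;=\; \frac{\va T_y(f) \vb \va T_y(g) \vb}{\vc T_y \vd^2} \;\le\; \frac{\epsilon}{\vc T_y \vd^2}\]
whenever $\vc f \vd_\infty = \vc g \vd_\infty = 1$ and $fg \equiv 0$ shows that $\varphi \in \epsilon'\text{-}\mathbf{DP}(X,\mathbb{K})$ with $\epsilon' := \epsilon / \vc T_y \vd^2 < 2/9$.

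Next, since $\epsilon' < 2/9$, I would invoke the second (uniqueness) part of Lemma~\ref{L1} applied to $\varphi$: it yields a unique $x \in X$ with
\[\va \lambda_{\varphi}(\{x\}) \vb \;\ge\; \frac{1 + \sqrt{1 - 4\epsilon'}}{2}.\]
The definition of $h_T$ stated just before the lemma (via the uniqueness in Lemma~\ref{L1}) identifies this $x$ precisely as $h_T(y)$. Using $\lambda_{\varphi} = \lambda_{T_y}/\vc T_y \vd$ and multiplying through by $\vc T_y \vd$, the inequality becomes
\[\va \lambda_{T_y}(\{h_T(y)\}) \vb \;\ge\; \frac{\vc T_y \vd + \sqrt{\vc T_y \vd^2 - 4 \epsilon}}{2}.\]

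Finally, I would observe that for a singleton set $\{h_T(y)\}$ the total variation measure agrees with the absolute value of the original measure, i.e.\ $\va \lambda_{T_y} \vb (\{h_T(y)\}) = \va \lambda_{T_y}(\{h_T(y)\}) \vb$ (this also follows from the fact, recorded in Lemma~\ref{sintelefarri}, that $\lambda_{\va T_y \vb} = \va \lambda_{T_y} \vb$). This yields the stated bound, completing the proof. There is no serious obstacle: the statement is essentially a translation of Lemma~\ref{L1} from norm-one functionals to general norms via rescaling, and the point $h_T(y)$ was already set up precisely to make this translation tautological.
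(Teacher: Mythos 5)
Your argument is correct and is essentially the paper's own: the lemma is obtained there by exactly this rescaling, viewing $T_y/\|T_y\|$ as a norm-one, $\epsilon/\|T_y\|^2$-disjointness preserving functional with $\epsilon/\|T_y\|^2<2/9$, applying the second part of Lemma~\ref{L1}, and identifying the resulting point with $h_T(y)$ through the uniqueness in the definition of $h_T$. Your final observation that $|\lambda_{T_y}|(\{h_T(y)\})=|\lambda_{T_y}(\{h_T(y)\})|$ for a singleton is the same (implicit) step the paper uses, so nothing is missing.
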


\begin{prop}\label{lg}
Let  $0 < \epsilon
< 2/9$, and let $T \in \dg$ with $\vc  T\vd =1$. Then the map $h_T$  is continuous.
\end{prop}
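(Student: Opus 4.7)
The plan is to argue by contradiction using nets. Suppose $h_T$ fails to be continuous at some $y \in Y_{\sqrt{9\epsilon/2}}$; then there is a net $(y_\alpha)$ in $Y_{\sqrt{9\epsilon/2}}$ with $y_\alpha \to y$ but $h_T(y_\alpha) \not\to h_T(y)$. Exploiting compactness of $X$ (to extract a cluster point $x^*$ of $(h_T(y_\alpha))$ different from $h_T(y)$) and of $[0,1]$ (to make $(\|T_{y_\alpha}\|)$ convergent), I pass to a subnet on which $h_T(y_\alpha) \to x^*$ and $\|T_{y_\alpha}\| \to a^*$. Since $y \mapsto \|T_y\|$ is lower semicontinuous (as the supremum of the continuous maps $y \mapsto |(Tf)(y)|$ over $f$ in the unit ball), $a^* \ge \|T_y\| > \sqrt{9\epsilon/2}$.

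Next I construct a separating test function. Choose disjoint open neighborhoods $V$ of $h_T(y)$ and $U$ of $x^*$ in $X$, and use Urysohn's lemma to pick $f \in C(X)$ with $f(h_T(y)) = 1$, $0 \le f \le 1$, and $\mathrm{supp}(f) \subset V$. Then $\|f\|_\infty = 1$ and $f$ vanishes on $U$; in particular, $f(h_T(y_\alpha)) = 0$ for all $\alpha$ sufficiently large, once $h_T(y_\alpha) \in U$.

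Two estimates now drive the contradiction. On the one hand, Corollary~\ref{zapatillas1966} applied to $\varphi = T_y$ and $f$ (using $|f(h_T(y))| = 1 = \|f\|_\infty$) yields $\sqrt{\|T_y\|^2 - 4\epsilon} \le |T_y(f)|$. On the other hand, the second part of Corollary~\ref{L3} applied to $\varphi = T_{y_\alpha}$, combined with $f(h_T(y_\alpha)) = 0$, gives for $\alpha$ large
\[
|T_{y_\alpha}(f)| \;=\; \bigl|\bigl(T_{y_\alpha} - \lambda_{T_{y_\alpha}}(\{h_T(y_\alpha)\})\delta_{h_T(y_\alpha)}\bigr)(f)\bigr| \;\le\; \frac{\|T_{y_\alpha}\| - \sqrt{\|T_{y_\alpha}\|^2 - 4\epsilon}}{2}.
\]

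Since $Tf \in C(Y)$, we have $T_{y_\alpha}(f) \to T_y(f)$. Passing to the limit in the two estimates, and using the fact that $x \mapsto x - \sqrt{x^2 - 4\epsilon}$ is decreasing on $[2\sqrt{\epsilon}, \infty)$ together with $a^* \ge \|T_y\|$, I obtain
\[
\sqrt{\|T_y\|^2 - 4\epsilon} \;\le\; \frac{a^* - \sqrt{(a^*)^2 - 4\epsilon}}{2} \;\le\; \frac{\|T_y\| - \sqrt{\|T_y\|^2 - 4\epsilon}}{2}.
\]
Rearranging this yields $3\sqrt{\|T_y\|^2 - 4\epsilon} \le \|T_y\|$, i.e. $\|T_y\|^2 \le 9\epsilon/2$, contradicting $y \in Y_{\sqrt{9\epsilon/2}}$. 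The delicate part of the argument is precisely this last numerical step: the threshold $\sqrt{9\epsilon/2}$ used to define the domain of $h_T$ is exactly what is needed to make the factor of $3$ produced by combining the upper and lower estimates of $|T_y(f)|$ yield a contradiction.
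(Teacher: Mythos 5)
Your proof is correct, but it takes a genuinely different route from the paper's. The paper argues constructively at a fixed $y_0$: by regularity of $\lambda_{T_{y_0}}$ it picks a Urysohn function $f_0$ concentrated near $h_T(y_0)$ so that $|(Tf_0)(y_0)| > \|T_{y_0}\|/2 > \sqrt{\epsilon}$, declares $V(y_0) := \{y : |(Tf_0)(y)| > \sqrt{\epsilon}\} \cap Y_{\sqrt{9\epsilon/2}}$ to be the required neighborhood, and shows $h_T(V(y_0)) \subset \mathrm{supp}(f_0)$ by building a second Urysohn function $f_1$ with disjoint support and $|(Tf_1)(y_1)| > \sqrt{\epsilon}$, so that $\|(Tf_0)(Tf_1)\|_\infty > \epsilon$ contradicts the $\epsilon$-disjointness preserving property directly. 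You instead run a net/compactness contradiction and never invoke the disjointness hypothesis directly: it enters only through Corollaries~\ref{L3} and \ref{zapatillas1966}, and the contradiction comes from playing the lower bound $\sqrt{\|T_y\|^2-4\epsilon} \le |T_y(f)|$ against the upper bound $\bigl(\|T_{y_\alpha}\| - \sqrt{\|T_{y_\alpha}\|^2-4\epsilon}\bigr)/2$, together with continuity of $Tf$, lower semicontinuity of $y \mapsto \|T_y\|$, and the monotonicity of $t \mapsto t-\sqrt{t^2-4\epsilon}$ (Lemma~\ref{pollo}); the resulting factor $3$ is exactly what the threshold $\sqrt{9\epsilon/2}$ is calibrated for, as you note. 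Your route avoids the measure-regularity step (shrinking to $U'$ with small mass off $\{h_T(y_0)\}$) and the second test function, at the price of being nonconstructive (no explicit neighborhood of $y_0$) and of extra topological bookkeeping (subnets, cluster points, semicontinuity). One point you should make explicit: when applying Corollary~\ref{zapatillas1966} to $T_y$ and Corollary~\ref{L3} to $T_{y_\alpha}$, you identify the point furnished there with $h_T(y)$, respectively $h_T(y_\alpha)$; this is immediate from the triangle inequality and the uniqueness in Lemma~\ref{L1} (it is the content of Lemma~\ref{nak}), but it deserves a line.
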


\begin{proof}
We will check the continuity of this map at every point.
To this end, fix $y_0\in  Y_{\sqrt{9\epsilon/2}} $ and let $U(h_T (y_0))$ be an open neighborhood  of $h_T (y_0)$. We have to find an open neighborhood $V(y_0)$ of $y_0$ such that $h_T (V(y_0))\subset U(h_T (y_0))$.

By regularity,
 there exists an open
neighborhood $U'(h_T (y_0))\subset U(h_T (y_0))$ of $h_T (y_0)$ such that
\[\va \lambda_{T_{y_0}} \vb (U'(h_T (y_0)))- \va \lambda_{T_{y_0}} \vb (\{h_T (y_0)\})< \frac{\sqrt{\vc T_{y_0}\vd^2-4\epsilon}}{2}.\]

Let $f_0\in C(X)$ with $0\le f_0\le 1$, $f_0(h_T (y_0))=1$, and 
${\rm   supp}(f_0)\subset U'(h_T (y_0))$.

We will now check that $\va (Tf_0)(y_0) \vb > \sqrt{\epsilon}$.
To this end, we proceed as follows:
 \begin{eqnarray*}
\va (Tf_0)(y_0) \vb
&=& 
\va\int_X f_0 d \lambda_{T_{y_0}} \vb
\\ &=&\va \int_{\{h_T(y_0)\}}f_0d\lambda_{T_{y_0}}+\int_{U'(h_T(y_0))\setminus \{h_T(y_0)\}}f_0d\lambda_{T_{y_0}} \vb
\\ &\ge&
\va \int_{\{h_T(y_0)\}}f_0d\lambda_{T_{y_0}} \vb - \int_{U'(h_T(y_0))\setminus \{h_T(y_0)\}}f_0 d \va    \lambda_{T_{y_0}} \vb
\\ &\ge&
\va f_0(h_T(y_0)) \vb  \va \lambda_{T_{y_0}}(h_T(\{y_0\})) \vb- \va \lambda_{T_{y_0}} \vb (U' (h_T(y_0))\setminus
\{h_T(y_0)\})
\\ &>&
\frac{\vc T_{y_0}\vd  +\sqrt{\vc T_{y_0}\vd^2-4\epsilon}}{2}- \frac{\sqrt{\vc T_{y_0}\vd^2-4\epsilon}}{2},
\end{eqnarray*}
and as a consequence, we see that
\[\va (Tf_0)(y_0) \vb > \frac{\vc T_{y_0}\vd  }{2}> \sqrt{9\epsilon/8} >\sqrt{\epsilon},\]
as was to be checked.

Let us now define
\[V(y_0):=\left\{y\in Y: \va (Tf_0)(y) \vb >\sqrt{\epsilon}\right\}\cap Y_{\sqrt{9\epsilon/2}}.\]
We will check that, if $y_1\in V(y_0)$, then $h_T(y_1)\in {\rm
supp}(f_0)$. Assume, contrary to what we claim, that $h_T(y_1)\notin
{\rm supp}(f_0)$. Then there exist
an
open set $U'(h_T(y_1))$ and a function $f_1\in C(X)$ such that ${\rm
supp}(f_1)\cap {\rm supp}(f_0)=\emptyset$, $0\le f_1\le 1$,
$f_1(h_T(y_1))=1$ and ${\rm supp}(f_1)\subset U'(h_T(y_1))$ with
\[\va \lambda_{T_{y_1}} \vb (U'(h_T(y_1))- \va \lambda_{T_{y_1}} \vb (\{h_T(y_1)\}) < \frac{\sqrt{\vc T_{y_1}\vd^2-4\epsilon}}{2}\]
As above,
\[\va (Tf_1)(y_1) \vb >  \sqrt{\epsilon}.\]
Hence,
\[\vc (Tf_1) ( Tf_0) \vd_{\infty}  \ge \va (Tf_1)(y_1) \vb \va (Tf_0)(y_1) \vb > \epsilon,\]
which contradicts the $\epsilon$-disjointness preserving property of $T$.
Summing up, $h_T$ is continuous.
\end{proof}

\begin{lem}\label{cv}
Let  $0 < \epsilon
< 2/9$, and let $T \in \dg$ with $\vc  T\vd =1$. If $t \in [0,1]$ and $y \in Y_{\sqrt{9\epsilon/2}}$, then
\[\va (Tf) (y) - t (T{\bf 1}) (y) f(h_T(y)) \vb \le  \vc T_y \vd  - t \sqrt{\vc T_y\vd^2-4\epsilon}\] for every $f \in C(X)$ with $\vc f \vd_{\infty} \le 1$.
\end{lem}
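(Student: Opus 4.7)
My plan is to reduce the estimate to the measure-theoretic bounds supplied by Corollary \ref{L3}. Set $x := h_T(y)$, $\alpha := \lambda_{T_y}(\{x\})$, and $\beta := (T{\bf 1})(y) = T_y({\bf 1})$, and introduce the ``error functional'' $\psi := T_y - \alpha\delta_x \in C(X)'$. Since $\lambda_\psi = \lambda_{T_y} - \alpha\delta_x$ carries no mass at $\{x\}$, the total variations add and $\vc \psi \vd = \vc T_y \vd - \va \alpha \vb$. Splitting $T_y$ this way produces the identity
\[(Tf)(y) - t(T{\bf 1})(y)\, f(x) = (\alpha - t\beta)\, f(x) + \psi(f),\]
so that for every $f \in C(X)$ with $\vc f \vd_{\infty} \le 1$,
\[\va (Tf)(y) - t\beta f(x) \vb \le \va \alpha - t\beta \vb + \vc \psi \vd.\]

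Next I would interpolate $\alpha - t\beta = t(\alpha - \beta) + (1-t)\alpha$, which by the triangle inequality yields $\va \alpha - t\beta \vb \le t \va \alpha - \beta \vb + (1-t) \va \alpha \vb$. Feeding this into the previous bound and using $\vc \psi \vd = \vc T_y \vd - \va \alpha \vb$ gives
\[\va (Tf)(y) - t\beta f(x) \vb \le \vc T_y \vd - t \pl \va \alpha \vb - \va \alpha - \beta \vb \pr,\]
so it remains only to verify that $\va \alpha \vb - \va \alpha - \beta \vb \ge \sqrt{\vc T_y \vd^2 - 4\epsilon}$.

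For this last inequality I would observe that $\alpha - \beta = -\psi({\bf 1})$, hence $\va \alpha - \beta \vb \le \vc \psi \vd$. Because $y \in Y_{\sqrt{9\epsilon/2}}$ forces $\sqrt{9\epsilon/2} < \vc T_y \vd \le 1$, the sharper form of Corollary \ref{L3} applies to $T_y$ and yields $\vc \psi \vd \le \pl \vc T_y \vd - \sqrt{\vc T_y \vd^2 - 4\epsilon} \pr / 2$, whence $\va \alpha \vb = \vc T_y \vd - \vc \psi \vd \ge \pl \vc T_y \vd + \sqrt{\vc T_y \vd^2 - 4\epsilon} \pr / 2$. Subtracting these two bounds gives $\va \alpha \vb - \va \alpha - \beta \vb \ge \va \alpha \vb - \vc \psi \vd \ge \sqrt{\vc T_y \vd^2 - 4\epsilon}$, as desired.

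The step I expect to be the main hurdle is identifying the correct convex combination $\alpha - t\beta = t(\alpha - \beta) + (1-t)\alpha$. A crude triangle inequality such as $\va \alpha - t\beta \vb \le \va \alpha \vb + t \va \beta \vb$ cannot produce the term $t \sqrt{\vc T_y \vd^2 - 4\epsilon}$ appearing on the right; this particular interpolation is precisely what converts the estimate into the known gap between $\va \alpha \vb$ and $\va \alpha - \beta \vb$ delivered by Corollary \ref{L3}.
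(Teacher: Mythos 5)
Your proposal is correct and follows essentially the same route as the paper: both decompose $T_y$ into the atom $\lambda_{T_y}(\{h_T(y)\})\delta_{h_T(y)}$ plus a remainder of norm $\vc T_y \vd - \va \lambda_{T_y}(\{h_T(y)\}) \vb$, use the same convex splitting in $t$ (your identity $\alpha - t\beta = t(\alpha-\beta)+(1-t)\alpha$ is the paper's three-term estimate in different bookkeeping, with $\va\alpha-\beta\vb\le\vc\psi\vd$ playing the role of the term $t\vc T_y - A_y\vd$), and invoke the lower bound $\va\alpha\vb\ge\bigl(\vc T_y\vd+\sqrt{\vc T_y\vd^2-4\epsilon}\bigr)/2$ from Lemma~\ref{nak}/Corollary~\ref{L3}. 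The only implicit step, identifying the point of Corollary~\ref{L3} with $h_T(y)$, is justified by the uniqueness of the point carrying more than half the mass and is taken for granted in the paper as well.
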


\begin{proof}
Let $A_y:= \lambda_{T_y} (\{h_T(y)\})\delta_{h_T(y)}$.
It is easy to check that, since $(\lambda_{T_y} -\lambda_{A_y})
(\{h_T(y) \}) =0$, then $\vc T_y\vd   = \vc T_y -A_y \vd   + \vc A_y \vd  $.
Furthermore, as by Lemma~\ref{nak},
\[\vc A_y \vd   \ge  \frac{\vc T_y\vd  +\sqrt{\vc T_y\vd^2-4\epsilon}}{2},\]
we deduce
\begin{equation*}
- \sqrt{\vc T_y\vd^2-4\epsilon} \ge \vc T_y\vd  - 2 \vc A_y \vd.
\end{equation*}

As a consequence, for $f\in C(X)$ with $\vc f\vd_{\infty}  \le 1$, we have
\begin{eqnarray*}
\va (Tf)(y)- t (T{\bf 1}) (y) f(h_T(y)) \vb &\le& \va T_y f- A_y f \vb + \va A_y f - t A_y f \vb  + \\& & \hspace{0.1in} + t \va A_y
\widehat{f(h_T(y))} - T_y \widehat{f(h_T(y))} \vb
\\&\le&
\vc T_y -A_y\vd   + (1 - t) \vc A_y \vd   + t \vc T_y -A_y\vd  
\\ &=&
(1 + t) (\vc T_y\vd  - \vc  A_y\vd ) + (1- t ) \vc  A_y\vd 
\\ &=&
\vc  T_y\vd  + t (\vc  T_y \vd  - 2 \vc A_y \vd )
\\&\le& \vc T_y \vd  - t \sqrt{\vc T_y\vd^2-4\epsilon},
\end{eqnarray*}
and we are done.
\end{proof}

\begin{lem}\label{pollo}
Let $0 < \epsilon < 1/4$. The function $\gamma : [2 \sqrt{\epsilon}, 1] \longrightarrow \mathbb{R}$, defined as  $\gamma (t) := t - \sqrt{t^2 -4 \epsilon}$ is strictly decreasing and  bounded above by $2 \sqrt{\epsilon}$.
\end{lem}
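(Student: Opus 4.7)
The plan is to rationalize the expression for $\gamma(t)$ by multiplying and dividing by the conjugate $t + \sqrt{t^2 - 4\epsilon}$. Since
\[
\gamma(t) \pl t + \sqrt{t^2-4\epsilon} \pr = t^2 - (t^2 - 4\epsilon) = 4\epsilon,
\]
this rewrites $\gamma$ as
\[
\gamma(t) = \frac{4\epsilon}{t + \sqrt{t^2-4\epsilon}}
\]
on the interval $[2\sqrt{\epsilon}, 1]$, which is well-defined because the denominator is strictly positive throughout (note $t \ge 2\sqrt{\epsilon} > 0$ since $\epsilon > 0$).

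From this representation the monotonicity is essentially immediate. The function $t \mapsto t$ is strictly increasing, and on $(2\sqrt{\epsilon}, 1]$ the map $t \mapsto \sqrt{t^2 - 4\epsilon}$ is also strictly increasing (its derivative $t/\sqrt{t^2 - 4\epsilon}$ being strictly positive there), so the denominator is strictly increasing on $[2\sqrt{\epsilon}, 1]$. Since the numerator $4\epsilon$ is a positive constant, $\gamma$ is strictly decreasing.

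For the upper bound, by monotonicity the supremum of $\gamma$ is attained at the left endpoint $t = 2\sqrt{\epsilon}$, where the denominator equals $2\sqrt{\epsilon} + 0 = 2\sqrt{\epsilon}$, giving
\[
\gamma(2\sqrt{\epsilon}) = \frac{4\epsilon}{2\sqrt{\epsilon}} = 2\sqrt{\epsilon},
\]
so $\gamma(t) \le 2\sqrt{\epsilon}$ for every $t$ in the domain. There is no real obstacle here; the only minor point to handle cleanly is that $\epsilon < 1/4$ ensures $2\sqrt{\epsilon} < 1$, so the interval $[2\sqrt{\epsilon}, 1]$ is nondegenerate and the statement is nonvacuous.
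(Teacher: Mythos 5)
Your proof is correct. The paper states Lemma~\ref{pollo} without any proof (it is treated as an elementary fact), and your rationalization $\gamma(t) = 4\epsilon/\bigl(t+\sqrt{t^2-4\epsilon}\bigr)$, with the denominator strictly increasing on $[2\sqrt{\epsilon},1]$ and the value $2\sqrt{\epsilon}$ attained at the left endpoint, is exactly the kind of clean argument the authors leave to the reader; it also neatly sidesteps the derivative blow-up of $\sqrt{t^2-4\epsilon}$ at $t=2\sqrt{\epsilon}$ that a direct differentiation of $\gamma$ would have to address.
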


\begin{proof}[Proof of Theorem~\ref{rz}]
We  fix $\delta_0 \in \pl 0,  \epsilon \pl 1- \sqrt{17 \epsilon /2}\pr \pr$ and, for each $n \in \mathbb{N} $, set $\delta_n := \delta_0 2^{-n}$. Also we define $D_n := Y_{\sqrt{17 \epsilon /2}+\delta_n}$ for $n \in \mathbb{N} \cup \{0\}$, which is nonempty for every $n$.

We easily deduce from Corollary~\ref{zapatillas1966} that $\va (T{\bf 1}) (y) \vb \ge \sqrt{9 \epsilon/2} + \delta_n$ for every $n \in \mathbb{N}$ and $y \in \cl D_n $. Consequently each $\cl D_n $ is contained in $Y_{\sqrt{9\epsilon/2}}$, so there exists a function $\alpha_n \in
C(Y)$ such that $0\le \alpha_n \le 1$,
\[\alpha_n \left(\cl D_n  \right)\equiv 1\]
and
\[{\rm supp}(\alpha_n)\subset Y_{\sqrt{\frac{9\epsilon}{2}}}.\]
Let us define $\alpha : Y \ra \mathbb{K}$ as \[\alpha := \sum_{n=0}^{\infty} \frac{\alpha_n}{2^{n+1}}.\]It is clear that $\alpha$ is continuous, $\vc \alpha \vd_{\infty} =1$, $c(\alpha)\subset Y_{\sqrt{9\epsilon/2}}$, $\alpha \left( D_0 \right) \equiv 1$, and $\alpha \ge 1/2^n$ on $D_n$ for each $n \in \mathbb{N}$. Finally define a weighted
composition map $S$ as \[(S f)(y):=\alpha(y) (T {\bf 1})(y)f(h_T(y))\]
for all $f\in C(X)$ and $y\in Y$.

\medskip
We will now check that
\[\vc T- S\vd  \le \sqrt{\frac{17 \epsilon}{2}}.\]
Fix any  $f\in C(X)$
with $\vc f\vd_{\infty}=1$.

Let us first study  the case of $y \in Y$ satisfying
$\vc T_y\vd  \le \sqrt{9\epsilon/2}$. Since $c (\alpha)\subset Y_{\sqrt{9\epsilon/2}}$, in this case   we have
\[\va (Tf)(y)-( S f)(y) \vb = \va (Tf)(y) \vb \le \sqrt{\frac{9\epsilon}{2}} .\]

Next, consider the remaining case $\sqrt{9\epsilon/2}<
\vc T_y\vd  \le 1$.
By Lemma~\ref{cv} we know that 
\[\va (Tf)(y)-(Sf)(y) \vb  \le\vc T_y \vd  - \alpha(y) \sqrt{\vc T_y\vd^2-4\epsilon}\] for every $y \in Y_{\sqrt{9\epsilon/2}}$.

We immediately deduce that $\va (Tf)(y)-(Sf)(y) \vb \le  \sqrt{17 \epsilon /2}$ for every $y$ with $\sqrt{9\epsilon/2}<
\vc T_y\vd  \le \sqrt{17 \epsilon /2}$. On the other hand, for   $y\in D_0$, we have $\alpha (y) =1$, so \[\va (Tf)(y)-(Sf)(y) \vb \le \vc T_y\vd  -\sqrt{\vc T_y\vd^2-4\epsilon} \le
2\sqrt{\epsilon}\] by Lemma~\ref{pollo}.

Finally, if $\sqrt{17 \epsilon /2} < \vc T_y\vd  \le \sqrt{17 \epsilon /2} + \delta_0$, then there exists $n \in \mathbb{N}$ such that $y \in D_n \setminus D_{n-1}$, that is, $\sqrt{17 \epsilon /2} + \delta_{n} < \vc T_y\vd  \le \sqrt{17 \epsilon /2} + \delta_{n-1}$.  Let us see that \begin{equation}\label{eqf}
\delta_{n-1} \le \alpha(y) \sqrt{\vc T_y\vd^2-4\epsilon}.
\end{equation}
Clearly, since we have chosen $\delta_0 < \sqrt{\epsilon}$, we know that 
\[2 \delta_0 < \sqrt{\vc T_y\vd^2-4\epsilon}.\]
Also,  by the definition of $\alpha$, we have $\alpha (y) \ge 1/2^n$, and the inequality \ref{eqf} follows. In this way we get 
\begin{eqnarray*}
\va (Tf)(y)-(Sf)(y) \vb &\le& \vc T_y \vd  - \alpha(y) \sqrt{\vc T_y\vd^2-4\epsilon} \\
&\le& \sqrt{\frac{17 \epsilon}{2}} + \delta_{n-1} - \alpha(y) \sqrt{\vc T_y\vd^2-4\epsilon} \\
&\le& \sqrt{\frac{17 \epsilon}{2}}.
\end{eqnarray*}

We conclude that $\vc T- S \vd \le \sqrt{17 \epsilon /2}$, as was to be proved.
\end{proof}

\section{How close. The general case: Examples}\label{pexample}

In this section we  first provide a sequence of examples  of  $2/9$-disjointness preserving operators of norm $1$, and then give  a related family of $2/17$-disjointness preserving operators. This will lead to an example of a norm one $2/17$-disjointness operator whose distance to every weighted composition map is at least $1$. We use  this to get an example, for each $\epsilon \in (0, 2/17)$, of an element of $\dg$ whose distance to $\wc$ is at least $\sqrt{17 \epsilon/2}$. This shows that
the bound given in Theorem~\ref{rz} is sharp.  All the sets involved in these examples are contained in $\mathbb{R}^3$.

\begin{ex}\label{deruhi-abiri}
{\em A special sequence $\pl R_n \pr$ of $2/9$-disjointness preserving operators of norm $1$.}

\medskip

For any two points $A, B$ in $\mathbb{R}^3$, let $\overline{AB}$  denote the segment joining them.
Also, given four points $A,B,C,D \in \mathbb{R}^3$, let  $\vee{ABCD}$ denote the union of the segments $\overline{AD}$, $\overline{BD}$ and $\overline{CD}$.

We will need some (closed) semilines,  all  contained in $z=0$, and  starting at the point $(0,0,0)$. First $l_A$ will be the semiline $x \ge 0$, $y=0$. 
We now take two other 
semilines, namely 
\begin{eqnarray*}
l_B &:=& \mathbf{rot} \pl l_A, \frac{2 \pi}{3} \pr \\
l_C &:=& \mathbf{rot} \pl l_B, \frac{2 \pi}{3} \pr ,
\end{eqnarray*} 
where for $G \subset \mathbb{R}^3$ and 
 $\theta \in [0, 2 \pi)$, $\mathbf{rot} (G, \theta)$ denotes the set obtained by rotating counterclockwise 
 $G$ an angle $\theta$ around the axis $z$.

Next consider the circles $S_1, S_1'$ and $S_1''$ centered at $(0,0,0)$ with radius $1, 2$, and $3$,  respectively, and contained in the plane $z=0$. 
For $E \in \{A, B, C\}$, we denote by $E_0, E_0'$ and $E_0''$ the points in the intersection of $l_E$ with $S_1, S_1'$ and $S_1''$, respectively.

Fix $\theta_0 := \pi/6$. For each  $n \in \mathbb{N}$ and $E=A, B, C$,  we now define a new semiline as $$m^E_n := \mathbf{rot} \pl l_E ,2 \pi -  \frac{\theta_0}{n} \pr.$$ We will use each of these semilines to obtain two new points, $E_n$ and $E_n'$, as the intersection of $m^E_n$ with $S_1$ and $S_1'$, respectively. That is,
if for $\mathbf{x} \in \mathbb{R}^3$ and $\theta \in [0,2 \pi)$, we write $\mathbf{rot} ( \mathbf{x}, \theta)$ meaning the point in $\mathbf{rot} ( \tl \mathbf{x} \tr, \theta)$, then 
$E_n := \mathbf{rot} ( E_0, 2 \pi -  \theta_0 /n)$
and
$E_n' := \mathbf{rot} ( E_0',  2 \pi -  \theta_0 /n),$
for $E=A, B, C$ and $n \in \mathbb{N}$.

We put  $D_0 := (0,0,0)$, and introduce two special points $D_1^n := (0,0,1/n^3)$, $D_2^n := (0,0,2/n^3)$ for each $n \in \mathbb{N}$. We also denote $D_0^n := D_0$ for all $n \in \mathbb{N}$.

Given $ n \in \mathbb{N}$, we start by considering the sets $W_n^0 := \vee{A_n B_0 C_0 D_0}$, $W_n^1 := \vee{A_0 B_n C_0 D^n_1}$, and
$W_n^2 := \vee{A_0 B_0  C_n D^n_2}$ (for the case $n=1$, see Figures~\ref{w0}, \ref{w1}, and \ref{w2}, respectively).
\begin{figure}[ht]
  \caption{The set $W_1^0$.}\label{w0}
  \begin{center}
    \includegraphics[scale=.3]{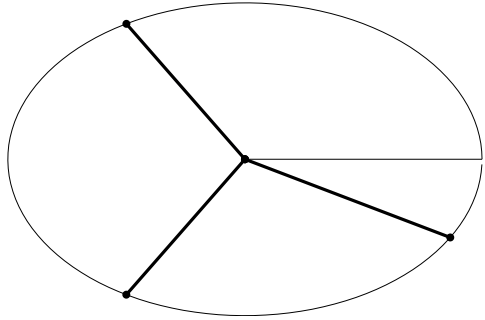}
  \end{center}
\end{figure}

\begin{figure}[ht]
  \caption{The set $W_1^1$}\label{w1}
  \begin{center}
    \includegraphics[scale=.3]{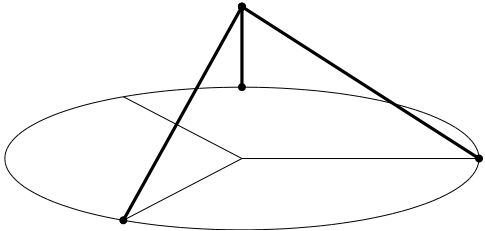}
  \end{center}
\end{figure}

\begin{figure}[ht]
  \caption{The set $W_1^2$}\label{w2}
  \begin{center}
    \includegraphics[scale=.3]{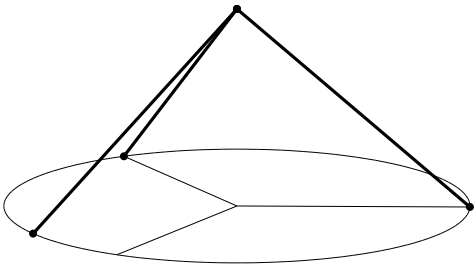}
  \end{center}
\end{figure}
 By the way we have taken all these points, we see
that for every $n$, the intersection of two different $W_n^i$ consists of one of the points $A_0, B_0, C_0$. In the same way, 
for $E=A, B, C$, each  $E_0$ belongs exactly to two sets $W_n^i$, and each  $E_n$ belongs to just one of them. Call $Z_n := W_n^0 \cup W_n^1 \cup W_n^2$ (see Figure \ref{z1} for the case $n=1$).

\begin{figure}[ht]
  \caption{The set $Z_1$}\label{z1}
  \begin{center}
    \includegraphics[scale=.3]{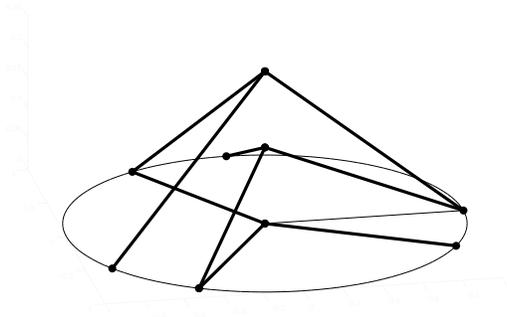}
  \end{center}
\end{figure}

Consider next a new set $W_0 := \vee{A_0 B_0 C_0 D_0}$, and (see Figure \ref{x0})$$X_0 := W_0 \cup \pl \bigcup_{E=A,B,C} \overline{E_0 E''_0} \pr .$$

\begin{figure}[ht]
  \caption{The set $X_0$}\label{x0}
  \begin{center}
    \includegraphics[scale=.3]{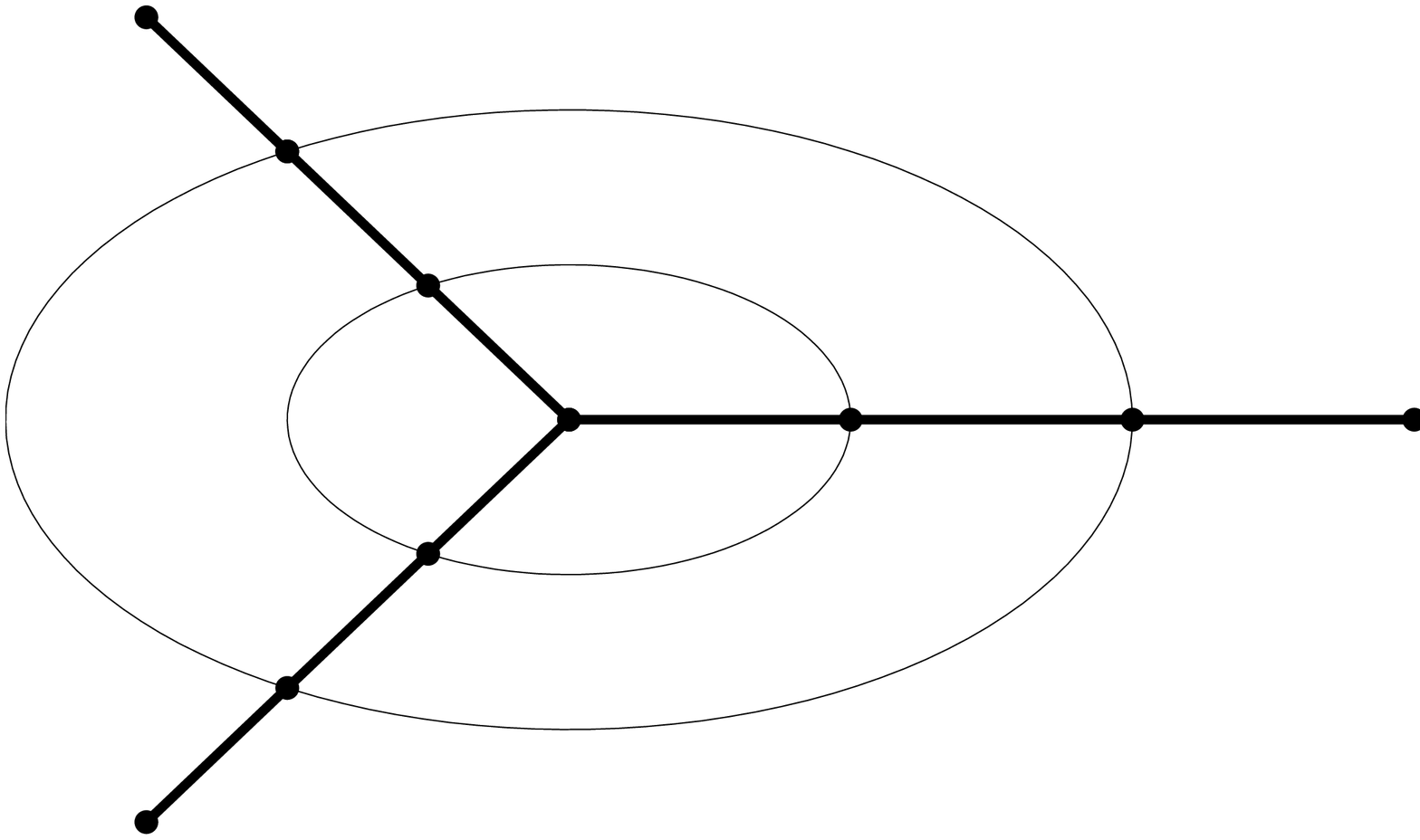}
  \end{center}
\end{figure} Also, for each $n \in \mathbb{N}$, define
$$X_n:= Z_n \cup \pl \bigcup_{E=A,B,C} \overline{E_0 E''_0} \pr \cup \pl \bigcup_{E=A,B,C} \overline{E_n E'_n} \pr$$(see Figures \ref{x1} and \ref{x1ab} for the case $n=1$)
\begin{figure}[ht]
  \caption{The set $X_1$}\label{x1}
  \begin{center}
    \includegraphics[scale=.3]{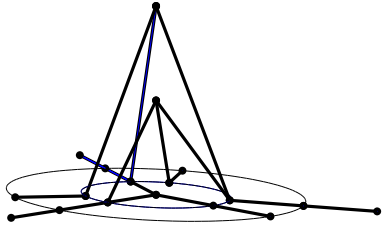}
  \end{center}
\end{figure} 

\begin{figure}[ht]
  \caption{Projection of $X_1$ on the plane $z=0$}\label{x1ab}
  \begin{center}
    \includegraphics[scale=.3]{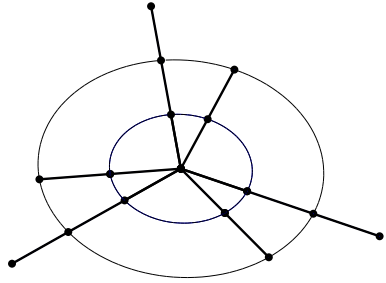}
  \end{center}
\end{figure}

\medskip

Our next step consists of introducing  a linear  and continuous operator $R_n: C(X_n) \ra C(X_0)$ for every $n \in \mathbb{N}$. 
Given any point $\mathbf{y} \in X_0$ and $f \in C(X_n)$, the definition of $R_n f$ at $\mathbf{y}$ will depend on whether  or not 
$\mathbf{y} \in W_0$.

To this end, for each $i=0, 1,2$ and $n \in \mathbb{N}$, we  will  define 
a map $h_n^i :W_0 \ra W_n^i$. Notice that, given $\mathbf{y} \in W_0$,      there exist $E \in \{A, B, C\}$ and 
$t \in [0,1]$ such that $\mathbf{y} = t E_0$. For such $\mathbf{y}$, we put  
$h_n^i ( \mathbf{y}) := D_i^n + t \pl E_{\mathbf{\mathbf{y}}} - D_i^n \pr $, where  $E_{\mathbf{y}} = E_0$ or $E_n$ (depending on whether $E_0 \in W_n^i$ or  $E_n \in W_n^i$). It is immediate to see 
that $h_n^i$ is indeed a surjective homeomorphism.

Suppose that $f \in C(X_n)$, and that $\mathbf{y} \in W_0$. Then
we define $$ (R_n f) (\mathbf{y}) := \frac{ f \pl h_n^1 (\mathbf{y}) \pr + f \pl h_n^2 (\mathbf{y}) \pr + f \pl h_n^3 (\mathbf{y}) \pr}{3}.$$

The definition of $R_n f$ at points of $X_0 \setminus W_0$ will be given in a different way. To do so we fix a continuous map $\zeta : [0,1] \ra [2/3,1]$ such that $\zeta (0) =2/3$ and $\zeta (1) = 1$.
For   $E=A,B,C$,  given $\mathbf{y} \in \overline{E_0 E'_0}$, there exists $t \in [0,1]$ such that $\mathbf{y} = E_0 + t \pl E_0' - E_0 \pr$. For such point, 
we set
\begin{eqnarray*}
 (R_n f) (\mathbf{y}) &:=&    \zeta(t) f (\mathbf{y}) + ( 1 - \zeta (t) )   f \pl E_n + t \pl E_n' - E_n \pr \pr. 
 \end{eqnarray*}
Finally, if $\mathbf{y}\in \overline{E'_0 E''_0}$, then define
$$ (R_n f ) (\mathbf{y}) := f(\mathbf{y}).$$

In particular we see that for  every $f \in C (X_n)$ and $E=A, B, C$, $(R_n f) (E_0) = 2 f\pl E_0 \pr /3 +  f\pl E_n \pr /3$, $(R_n f) (E'_0) =  f\pl E'_0 \pr $, and $(R_n f) (E''_0) =  f\pl E''_0 \pr $.

On the other hand, it is easy to check that $R_n$ is  linear and continuous, with $\vc R_n \vd = 1$, and that it is $2/9$-disjointness preserving.
\end{ex}

\begin{ex}\label{bebel}
{\em A special sequence $\pl T_n \pr$ of $2/17$-disjointness preserving operators of norm $1$.}

\medskip

We follow the same notation as in Example~\ref{deruhi-abiri}. To construct the new examples  take a continuous map $$\rho : X_0 \ra \ql \frac{3}{\sqrt{17}} , 1 \qr$$ such that $\rho \pl W_0 
\cup \bigcup_{E=A, B, C} \overline{E_0  E'_0} \pr  \equiv 3/\sqrt{17}$
and $\rho \pl \tl A''_0 , B''_0, C''_0 \tr \pr \equiv 1$. For each $n \in \mathbb{N}$ define $T_n : C(X_n ) \ra C(X_0)$ as $T_n f := \rho \cdot R_n f$ for every $f \in C(X_n)$.

Taking into account that $R_n$ is 
$2/9$-disjointness preserving, it is straightforward to see that each $T_n$  is $2/17$-disjointness preserving. On the other hand, it is immediate that it has norm $1$.

\medskip

Unfortunately, it is possible to construct a weighted composition map $S_n: C(X_n ) \ra C(X_0)$ whose distance to
$T_n$ is strictly less than $1$. This can be done as follows. Pick any $\epsilon >0$, and consider $a_n \in C(X_0)$,
$0 \le a_n \le 1$, 
such that $a_n \equiv 1$ on $\rho^{-1} \pl \ql 3/ \sqrt{17} + \epsilon, 1 \qr \pr$ and $a_n \equiv 0$ on $\rho^{-1} \pl \tl 3/ \sqrt{17} \tr \pr$. If we define $S_n : C(X_n) \ra C(X_0)$  as $(S_n f) (\mathbf{y}) := 
a_n (\mathbf{y}) \rho (\mathbf{y}) f (\mathbf{y})$ for every $f \in C(X_n)$ and $\mathbf{y} \in X_0$, then
$ (S_n f) (\mathbf{y})  = (T_n f) (\mathbf{y})$ when $y \in \rho^{-1} \pl \ql 3/ \sqrt{17} + \epsilon, 1 \qr \pr$. Thus  
 $\vc T_n - S_n \vd \le 3/ \sqrt{17} + \epsilon$.
\end{ex} 

Consequently, constructing an operator having the desired properties is more complicated. It will be done in the next example.

\begin{ex}\label{andalu}
{\em A $2/17$-disjointness preserving operator of norm $1$ whose distance to any weighted composition map is at least $1$.}

\medskip

We follow the notation given in Examples~\ref{deruhi-abiri} and \ref{bebel}. Also, for $n \in \mathbb{N}$, we put  $\mathbf{w}_n := (0,0, 1/n)$, and define 
\begin{eqnarray*}
X &:=& X_0 \cup \pl \bigcup_{n=1}^{\infty} \mathbf{w}_n + X_n  \pr \cup \pl \bigcup_{n=1}^{\infty} - \mathbf{w}_n + X_0  \pr ,\\
Y &:=& X_0 \cup \pl \bigcup_{n=1}^{\infty} \mathbf{w}_n + X_0  \pr \cup \pl \bigcup_{n=1}^{\infty} - \mathbf{w}_n + X_0  \pr 
\end{eqnarray*}

\smallskip

Related to the  $T_n$, we can introduce in a natural way new norm one $2/17$-disjointness preserving operators $T'_n : C \pl \mathbf{w}_n + X_n \pr \ra C \pl  \mathbf{w}_n + X_0 \pr$ as follows. First, given $\mathbf{z} \in \mathbb{R}^3$, denote by $\tau_{\mathbf{z}}$ the translation operator sending each $\mathbf{x} \in \mathbb{R}^3$ to $\mathbf{z} + \mathbf{x}$.
Then define $P_n :  C \pl \mathbf{w}_n + X_n \pr \ra C \pl  X_n \pr$ as $P_n f := f \circ \tau_{\mathbf{w}_n}$ for every $f \in C \pl \mathbf{w}_n + X_n \pr$, and $Q_n : C \pl   X_0 \pr \ra C \pl  \mathbf{w}_n + X_0 \pr$ as
$Q_n g := g \circ \tau_{- \mathbf{w}_n}$ for every $ g \in C \pl   X_0 \pr$. 
Finally put $T'_n := Q_n \circ T_n \circ P_n$.

\smallskip

Next we give a $2/17$-disjointness preserving linear and continuous operator $T: C(X) \ra C(Y)$ of norm $1$.
In the process of definition, as well as in the rest of this example, the restrictions of functions $f \in C(X)$
to subspaces of $X$ will be also denoted by $f$.
Take any $f \in C(X)$.
If $\mathbf{y} \in X_0$, then we define $$(Tf) (\mathbf{y}) := \rho (\mathbf{y}) f(\mathbf{y}).$$ 
Also,  for $n \in \mathbb{N}$ and $\mathbf{y} \in X_0$, we put 
$$(Tf) (-\mathbf{w}_n + \mathbf{y}) := \pl \frac{1}{2} + \frac{\rho (\mathbf{y})}{2} \pr f(-\mathbf{w}_{2n} + \mathbf{y}) - \pl \frac{1}{2} - \frac{\rho (\mathbf{y})}{2} \pr f(-\mathbf{w}_{2n -1} + \mathbf{y}) ,$$ 
and
$$(Tf) ( \mathbf{w}_n + \mathbf{y} ) :=   \pl T'_n f \pr (\mathbf{\mathbf{w}_n + y}) .$$

It is easy to check that $\vc T \vd =1$, and we can use the fact that $0 \le \pl 1 - \rho(\mathbf{z})^2 \pr /4 \le 2/17$ for all $\mathbf{z}$, and that each $T'_n$ is $2/17$-disjointness preserving to show that $T$ is $2/17$-disjointness preserving.

\smallskip

We are going to prove that if $S: C(X) \ra C(Y)$ is a weighted composition map, then $\vc T-S \vd \ge 1$. We suppose that this is not true, so there exist  
 $a \in C(Y)$ and a continuous map $h: c(a) \ra X$ such that $Sf  = a \cdot  f \circ h$ and $\vc T -S \vd <1$. We will see that this is not possible.

 \begin{claim}\label{tregueta}
The following hold:
\begin{enumerate}
\item $X_0 \cup \pl \bigcup_{n=1}^{\infty} - \mathbf{w}_n + X_0 \pr \subset c(a) $,
\item \label{nasda} Given $n \in \mathbb{N}$ and $\mathbf{y} \in X_0$,  $h \pl - \mathbf{w}_n + \mathbf{y} \pr =    - \mathbf{w}_{2n} + \mathbf{y} $. 
\item\label{entier-bebel} Given  $n \in \mathbb{N}$ and $E= A, B, C$. 
$h  \pl \mathbf{w}_n + E''_0 \pr = \mathbf{w}_n + E''_0$. 
\item $h(\mathbf{y}) = \mathbf{y}$ for every $\mathbf{y} \in X_0$.
\end{enumerate}
 \end{claim}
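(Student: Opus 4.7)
The plan is to combine three tools: a direct Urysohn/evaluation trick that pins $a$ and $h$ down at points where one can arrange $(Tf)(\mathbf{y})=\pm 1$; the continuity of $a$ and $h$ on $c(a)$, which propagates such information to limits; and a connectedness argument that rules out the ``wrong branch'' that a purely pointwise analysis leaves open. Throughout let $M:=\vc T-S\vd < 1$.

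\emph{Parts (1) on $\bigcup_n(-\mathbf{w}_n+X_0)$ and (3).} Fix $\mathbf{y}_0\in X_0$ and $n\in\mathbb{N}$, and use Urysohn's lemma to choose $f\in C(X)$ with $\vc f\vd_{\infty}=1$, $f(-\mathbf{w}_{2n}+\mathbf{y}_0)=1$, and $f(-\mathbf{w}_{2n-1}+\mathbf{y}_0)=-1$. The defining formula of $T$ gives $(Tf)(-\mathbf{w}_n+\mathbf{y}_0)=\tfrac{1+\rho(\mathbf{y}_0)}{2}+\tfrac{1-\rho(\mathbf{y}_0)}{2}=1$, and since $\vc T-S\vd < 1$ we deduce $\va a(-\mathbf{w}_n+\mathbf{y}_0)\vb\ge 1-M > 0$. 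For Part (3), the identity $R_n g(E''_0)=g(E''_0)$ and the normalization $\rho(E''_0)=1$ collapse $T$ at $\mathbf{w}_n+E''_0$ to evaluation at that point; if $h(\mathbf{w}_n+E''_0)$ were different from $\mathbf{w}_n+E''_0$, Urysohn would furnish $f$ with $f(\mathbf{w}_n+E''_0)=1$ and $f(h(\mathbf{w}_n+E''_0))=0$, giving $\va(Tf-Sf)(\mathbf{w}_n+E''_0)\vb=1$, contradicting $M<1$.

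\emph{Part (2), the main obstacle.} Fix $\mathbf{y}_0\in X_0$ and $n$. Modifying the $f$ above so that it also vanishes at $h(-\mathbf{w}_n+\mathbf{y}_0)$ (possible when this point is distinct from both $-\mathbf{w}_{2n}+\mathbf{y}_0$ and $-\mathbf{w}_{2n-1}+\mathbf{y}_0$) rules out $h(-\mathbf{w}_n+\mathbf{y}_0)\notin\{-\mathbf{w}_{2n}+\mathbf{y}_0,\,-\mathbf{w}_{2n-1}+\mathbf{y}_0\}$. At $\mathbf{y}_0=A''_0$ the coefficient $\rho=1$ makes $T$ evaluation at $-\mathbf{w}_{2n}+A''_0$, and the Part (3) argument pins $h(-\mathbf{w}_n+A''_0)=-\mathbf{w}_{2n}+A''_0$. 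The subtlety is that when $\rho(\mathbf{y}_0)<1$ no pointwise bound isolates the ``correct'' branch: even assuming $h(-\mathbf{w}_n+\mathbf{y}_0)=-\mathbf{w}_{2n-1}+\mathbf{y}_0$, the best inequality extractable at that single point is $M\ge\tfrac{1+\rho(\mathbf{y}_0)}{2}$, which is compatible with $M<1$. To finish, observe that $h$ is continuous on $c(a)\supset\bigcup_n(-\mathbf{w}_n+X_0)$, so $\varphi:\mathbf{y}_0\mapsto h(-\mathbf{w}_n+\mathbf{y}_0)$ is a continuous map from the connected compact set $X_0$ into $(-\mathbf{w}_{2n}+X_0)\cup(-\mathbf{w}_{2n-1}+X_0)$; this is a disjoint union, since the two copies of $X_0$ lie in the parallel planes $z=-1/(2n)$ and $z=-1/(2n-1)$. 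Connectedness forces $\varphi(X_0)$ to lie entirely in the piece containing $\varphi(A''_0)=-\mathbf{w}_{2n}+A''_0$, namely $-\mathbf{w}_{2n}+X_0$; combined with the established dichotomy, $h(-\mathbf{w}_n+\mathbf{y}_0)=-\mathbf{w}_{2n}+\mathbf{y}_0$ for every $\mathbf{y}_0\in X_0$.

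\emph{Part (1) on $X_0$ and Part (4).} Since $-\mathbf{w}_n+\mathbf{y}_0\to\mathbf{y}_0$ in $Y$ and $\va a(-\mathbf{w}_n+\mathbf{y}_0)\vb\ge 1-M$, continuity of $a$ on $Y$ yields $\va a(\mathbf{y}_0)\vb\ge 1-M>0$, completing Part (1). For Part (4), $\mathbf{y}_0\in c(a)$ so $h$ is continuous there, and Part (2) gives $h(\mathbf{y}_0)=\lim_n h(-\mathbf{w}_n+\mathbf{y}_0)=\lim_n(-\mathbf{w}_{2n}+\mathbf{y}_0)=\mathbf{y}_0$.
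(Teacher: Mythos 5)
Your proof is correct and follows essentially the same route as the paper's: test functions supported on the relevant copies of $X_0$ (the paper uses $\xi_{-\mathbf{w}_{2n}+X_0}-\xi_{-\mathbf{w}_{2n-1}+X_0}$, you use Urysohn functions) give the two-point dichotomy and pin $h$ at the points where $\rho=1$, connectedness of $h\pl -\mathbf{w}_n+X_0\pr$ then selects the branch $-\mathbf{w}_{2n}+X_0$, and continuity of $a$ and $h$ transfers everything to $X_0$. The only deviations are cosmetic: the quantitative bound $\va a\vb\ge 1-M$ in place of the paper's $\delta$-argument for $X_0\subset c(a)$, and anchoring the connectedness argument at $A''_0$ alone rather than at all three points $E''_0$.
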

 
 \begin{proof}
 Suppose first that there exist $n \in \mathbb{N}$ and $\mathbf{y} \in  X_0$ with $- \mathbf{w}_n + \mathbf{y} \notin c(a)$. Consider $$f_0:= \xi_{- \mathbf{w}_{2n} + X_0} - \xi_{- \mathbf{w}_{2n-1} + X_0} \in C(X).$$ 
By definition, it is clear that $(Tf_0) (- \mathbf{w}_n + \mathbf{y}) =1$. Also $\vc f_0 \vd_{\infty} =1$ and $(Sf_0) (- \mathbf{w}_n + \mathbf{y}) =0$. This gives $\vc Tf_0 - Sf_0 \vd_{\infty} =1$, against our assumptions. 
 
 On the other hand, exactly the same contradiction is reached if we assume that $h \pl - \mathbf{w}_n + \mathbf{y} \pr \notin  \tl - \mathbf{w}_{2n-1} + \mathbf{y} , - \mathbf{w}_{2n} + \mathbf{y} \tr$ for some $n \in \mathbb{N}$ and $\mathbf{y} \in X_0$. Namely we take $g_0 \in C(X)$ with $\vc g_0 \vd_{\infty} =1$, and such that
$g_0 \pl \tl - \mathbf{w}_{2n-1} + \mathbf{y} , - \mathbf{w}_{2n} + \mathbf{y} \tr \pr \equiv 1$ and $g_0 \pl h \pl - \mathbf{w}_n + \mathbf{y}  \pr \pr =0$. It is clear that if we take $f_0$ as above, and  define $f_1 := f_0 g_0$, then $\vc f_1 \vd_{\infty} =1$
and $\vc Tf_1 - Sf_1 \vd_{\infty} =1$, which is impossible.
Of course, working now with $\xi_{- \mathbf{w}_{2n} + X_0}$, we deduce that 
$h \pl - \mathbf{w}_n + E''_0 \pr = - \mathbf{w}_{2n} + E''_0 $ for $E=A,B,C$, which implies, since
$h \pl  - \mathbf{w}_n + X_0 \pr $ is connected, that
$h \pl - \mathbf{w}_n + \mathbf{y} \pr = - \mathbf{w}_{2n} + \mathbf{y} $ for every $\mathbf{y} \in X_0$ and $n \in \mathbb{N}$. This proves (\ref{nasda}). The proof of (\ref{entier-bebel}) is similar.

 Suppose next that there is a point $\mathbf{y} \in X_0$ with $\mathbf{y} \notin c(a)$. Fix any $\delta >0$. Then there exists a neighborhoof $U$ of $\mathbf{y}$ such that $\va a (\mathbf{z}) \vb < \delta$ for every $\mathbf{z} \in U$. In particular we can select
 $\mathbf{z} \in U \cap \pl - \mathbf{w}_n + X_0 \pr $ for some $n \in \mathbb{N}$. Take $f_0$ as above, 
 which satisfies $(Tf_0) (\mathbf{z}) =1$.
Consequently, $\left| (Sf_0) (\mathbf{z}) \right| \le \va a(\mathbf{z}) \vb <\delta$ and 
$\va (Tf_0) (\mathbf{z}) - (Sf_0) (\mathbf{z}) \vb \ge 1- \delta$. We conclude again that $\vc T - S \vd =1$, against our assumptions.
 This shows that $X_0 \subset c(a)$. 
 
Finally, since $h$ is continuous, we deduce that $h(\mathbf{y}) = \mathbf{y}$ for every $\mathbf{y} \in X_0$.
\end{proof}

By Claim~\ref{tregueta}, we have that there exists $n_0 \in \mathbb{N}$ such that $\mathbf{w}_n + X_0 \subset c(a)$ for every $n \ge n_0$. 
Now, since $X_0$ is connected, we deduce in particular that for each $n \ge n_0$,  $h(\mathbf{w}_n + X_0) $ is 
contained in $ \mathbf{w}_{n} + X_{n}$.

If we now set $\mathbf{F}:= \tl (x, y, z) \in \mathbb{R}^3 : x^2 + y^2 <1/2 \tr$, we see that there is an open 
ball $B(D_0, r)$ of center  $D_0$ and radius $r \in (0, 1)$ such that $B(D_0, r) \subset c(a)$ and $h(B(D_0, r))  \subset \mathbf{F}$. Let
$n_1 \in \mathbb{N}$, $n_1 \ge n_0$, such that $B(D_0, r) \cap \pl \mathbf{w}_n + X_0 \pr \neq \emptyset$ for every $n \ge n_1$. 

We clearly have  that if we fix any 
$n \ge n_1$, then 
$$h \pl B(D_0, r) \cap \pl \mathbf{w}_{n} + X_0 \pr \pr \subset \mathbf{F} \cap \pl \mathbf{w}_{n} + X_{n} \pr.$$ 
On the other hand, $B(D_0, r) \cap \pl \mathbf{w}_{n} + X_0 \pr$ is connected, and so must be its image by $h$.
Since $\mathbf{F} \cap \pl \mathbf{w}_{n} + X_{n} \pr$ has three connected components, each containing a different point $\mathbf{w}_{n} + D_i^{n}$, $i=0,1,2$, then we have that $h \pl B(D_0, r) \cap \pl \mathbf{w}_{n} + X_0 \pr  \pr$ contains at most one point $\mathbf{w}_{n} + D_i^{n}$, $i=0,1,2$.

\begin{claim}\label{there-tarde}
The set of integers $n \ge n_1$ satisfying
$$\ca \tl i  : \mathbf{w}_{n} + D_i^{n} \in h \pl \mathbf{w}_n + X_0 \pr, i=0,1,2  \tr \ge 2$$
is finite.
\end{claim}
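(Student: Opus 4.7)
The plan is to argue by contradiction: suppose the set in question is infinite. The key ingredient is already at hand from the paragraph preceding the claim, namely that for every $n \ge n_1$ the image $h(B(D_0,r) \cap (\mathbf{w}_n + X_0))$ contains at most one of the three tips $\mathbf{w}_n + D_i^n$, $i=0,1,2$. Consequently, whenever $h(\mathbf{w}_n + X_0)$ contains at least two of these tips, one of them, call it $\mathbf{w}_n + D_{j_n}^n$, cannot lie in $h(B(D_0,r) \cap (\mathbf{w}_n + X_0))$; then \emph{every} preimage of that tip inside $\mathbf{w}_n + X_0$ is forced outside $B(D_0,r)$. I pick such a preimage $\mathbf{z}_n \in (\mathbf{w}_n + X_0) \setminus B(D_0,r)$, so that $\| \mathbf{z}_n - D_0 \| \ge r$.

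Next I apply a compactness argument. Writing $\mathbf{z}_n = \mathbf{w}_n + \mathbf{y}_n$ with $\mathbf{y}_n \in X_0$, the compactness of $X_0 \subset \mathbb{R}^3$ lets me extract, along the hypothesised infinite set of indices, a subsequence along which $\mathbf{y}_n \to \mathbf{y}_\infty \in X_0$. Since $\mathbf{w}_n \to 0$ in $\mathbb{R}^3$, I get $\mathbf{z}_n \to \mathbf{y}_\infty$; passing to the limit in $\| \mathbf{z}_n - D_0 \| \ge r$ yields $\mathbf{y}_\infty \ne D_0$.

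Finally, I evaluate $h(\mathbf{y}_\infty)$ in two incompatible ways. By Claim~\ref{tregueta}, $X_0 \subset c(a)$ and $h|_{X_0} = \mathrm{id}$, so $h(\mathbf{y}_\infty) = \mathbf{y}_\infty$. On the other hand, $h$ is continuous on $c(a)$ and $\mathbf{y}_\infty \in X_0 \subset c(a)$, hence
\[ h(\mathbf{y}_\infty) \;=\; \lim_n h(\mathbf{z}_n) \;=\; \lim_n (\mathbf{w}_n + D_{j_n}^n) \;=\; D_0, \]
where the last equality uses that $\mathbf{w}_n \to 0$ and $\| D_{j_n}^n \| \le 2/n^3 \to 0$ regardless of the choice of $j_n \in \{0,1,2\}$. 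Thus $\mathbf{y}_\infty = D_0$, contradicting $\mathbf{y}_\infty \ne D_0$.

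The main obstacle, really a single geometric observation, is the realisation that whenever a ``second'' tip appears in $h(\mathbf{w}_n + X_0)$, \emph{all} of its preimages inside $\mathbf{w}_n + X_0$ must sit outside the ball $B(D_0,r)$; once this is secured, the distance lower bound survives the limit and collides with the identity behaviour of $h$ on $X_0$ to close the argument at once.
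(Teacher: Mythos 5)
Your proposal is correct and follows essentially the same route as the paper: force a preimage of a ``second'' tip outside $B(D_0,r)$, extract an accumulation point $\mathbf{s}\in X_0$ with $\vc \mathbf{s}\vd \ge r$, and derive the contradiction $h(\mathbf{s})=D_0$ versus $h(\mathbf{s})=\mathbf{s}$ from Claim~\ref{tregueta} and continuity of $h$ on $c(a)$. Your write-up merely makes explicit some details the paper leaves implicit (e.g.\ that $\mathbf{w}_n+D_{j_n}^n\to D_0$).
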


\begin{proof}
Suppose on the contrary that this set is infinite. 
By the comments above we deduce that there is an infinite subset $\mathbb{M}$ of $\mathbb{N}$ such that, if $n \in \mathbb{M}$, then there exists  $\mathbf{s}_n \in  \mathbf{w}_n + X_0  $, $\mathbf{s}_n \notin B(D_0 , r)$, 
and $h \pl \mathbf{s}_n \pr \in \tl \mathbf{w}_{n} + D_i^{n} : i=0,1,2 \tr$.  Since $Y$ is compact, there is an accumulation point
$\mathbf{s}$ of $\tl \mathbf{s}_n : n \in \mathbb{M} \tr$ in $X_0$, which necessarily satisfies $\vc \mathbf {s} \vd \ge r$. By continuity we must have $h(\mathbf{s}) =D_0$, and since $\mathbf{s} \in X_0$, then  we also have  $h (\mathbf{s}) = \mathbf{s}$, which is impossible.
\end{proof}

To finish, we use Claim~\ref{there-tarde}, and take an integer $n \ge n_1$ such that
there is at most one $i \in \{0,1,2\}$ with  $\mathbf{w}_{n} + D_i^{n} \in h \pl \mathbf{w}_n + X_0 \pr$. Suppose for instance that $i \neq 1, 2$ (the other cases are similar). By Claim~\ref{tregueta}(\ref{entier-bebel}), 
$h \pl \mathbf{w}_n + E''_0 \pr = \mathbf{w}_{n} +  E''_0 $, for $E=A,B,C$. 
so  the image by $h$ of the subset $\mathbf{w}_n + \pl \overline{D_0 A''_0} \cup \overline{D_0 C''_0} \pr$ 
is a connected subset of $ \mathbf{w}_{n} + \pl X_n  \setminus \tl D_1^{n}, D_2^{n} \tr \pr$ joining $\mathbf{w}_{n} +  A''_0 $ and $\mathbf{w}_{n} +  C''_0 $.
We easily see that this is impossible.
\end{ex}

\begin{ex}\label{gustavo}
{\em An example showing that the bound given in Theorem~\ref{rz} is sharp.}

Let $0 < \epsilon < 2/17$.  We claim that there exists a norm one
$\epsilon$-disjointness preserving operator $T'$  such that $\vc   T' - S' \vd   \ge \sqrt{17 \epsilon/2}$ for every  weighted composition map
 $S'$.

Let \[\gamma := \sqrt{\frac{17 \epsilon}{2}},\] and let $X$, $Y$, and $T$ 
be as in the previous example. We need a point not belonging to $X \cup Y$, for instance $(4, 0, 0)$. If we consider the sets $X' := X
\cup \{(4, 0, 0)\}$ and $Y' := Y \cup \{(4, 0, 0)\}$, then we 
define a linear map $T' : C(X') \longrightarrow C(Y')$ such that, for every $f\in C(X')$, $(T' f)
(4, 0, 0) := f(4, 0, 0)$  and, for all $\mathbf{y} \in Y$, $(T'f) (\mathbf{y}) := \gamma
(Tf_r) (\mathbf{y})$, where $f_r$ is the restriction of $f$ to $X$.

Since $T$ is a $ 2 /17$-disjointness preserving, then $T'$ is
$ 2\gamma^2 /17 $-disjointness preserving, which is to say
$\epsilon$-disjointness preserving.

Let $S' : C(X') \ra C(Y')$ be a weighted composition map with associated maps $a' \in C(Y')$ and $h' : Y' \ra X'$ (continuous on $c(a')$). It is easy to see that the set $A:=  c(a') \cap  h'^{-1} \pl X \pr$ is closed and open in $c(a')$, and that the restriction to $Y$ of $ a'  \xi_A $ (denoted by $a$)  belongs to $C(Y)$. Also, if we fix $x_0 \in X$ and define  $h :Y \ra X$  as $h(y) := h'(y)$ for every $y \in c(a)$, and $h(y) := x_0$ for $y \in Y \setminus c(a)$, then $h$ is continuous on $c(a)$. Next we  consider the weighted composition map $S: C(X) \ra C(Y)$ given as $Sf := a \cdot f \circ h$ for all $f \in C(X)$.  By Example~\ref{andalu} we have that, for every $\delta >0$, there exists $f_{\delta} \in C(X)$ with $\vc f_{\delta} \vd_{\infty} =1$ and $\vc \pl \gamma T - S \pr \pl f_{\delta} \pr \vd_{\infty} \ge \gamma- \delta$. It is now apparent that, if  $g_{\delta} \in C(X')$ is  an  extension of  $f_{\delta}$ such that $g_{\delta} (4, 0, 0) =0$, then
$ \vc \pl S' - T' \pr \pl g_{\delta} \pr \vd_{\infty}   \ge   \gamma- \delta$. Therefore $$\vc  S' - T' \vd \ge
\gamma=\sqrt{\frac{17 \epsilon}{2}}.$$
\end{ex}

\section{How far. The case when $X$ is infinite}\label{reallyfar}

 In this section we consider  the case when $X$ is infinite, and prove Theorems~\ref{ex3} and \ref{vr}. The proof
that Theorem~\ref{vr} is not valid for general $Y$ can obviously be seen  in Example~\ref{gustavo}, but also  in Example~\ref{destranyo-kajero}. The finite case is special, and we leave it for the next section.

\begin{proof}[Proof of Theorem~\ref{ex3}]
For $\delta >0$, let us choose a regular Borel  probability  measure $\mu$ on $X$
such  $\mu(\{x\})\le \delta / 2$ for every $x\in X$.

Next, fix $y_0,y_1$ in $Y$ and $x_0\in X$. After choosing two
disjoint neighborhoods, $U(y_0)$ and $U(y_1)$, of $y_0$ and $y_1$,
respectively, we define two continuous functions,
$\alpha:Y\longrightarrow [0,2 \sqrt{\epsilon}]$ and
$\beta:Y\longrightarrow [0,1]$, with the following properties:

\begin{itemize}
\item $\alpha(y_0)=2\sqrt{\epsilon}$
\item ${\rm supp}(\alpha)\subset U(y_0)$
\item $\beta(y_1)=1$
\item ${\rm supp}(\beta)\subset U(y_1)$
\end{itemize}

Next, for each $y\in Y$, we define two continuous linear
functionals on $C(X)$ as follows:

\[F_y(f)=\beta(y)\delta_{x_0}(f)\]
\[G_y(f)=\alpha(y)\int_{X}fd\mu\]

By using these functionals we can now introduce a linear map
$T:C(X)\longrightarrow C(Y)$ such that $(Tf)(y)=F_y(f)+G_y(f)$ for every $f \in C(X)$.

Let us first check that $\vc T\vd =1$. To this end, it is apparent
that $(T{\bf 1})(y_1)=F_{y_1}({\bf 1})+G_{y_1}({\bf 1})=1+0=1$.
Consequently, $\vc T\vd \ge 1$. On the other hand, it is easy to see that if $f\in C(X)$
satisfies $\vc f\vd_{\infty} =1$, then $\va (Tf)(y) \vb  \le 1$ for every $y \in Y$. 
Hence, $\vc T\vd =1$.

The next step consists of checking that $T$ is
$\epsilon$-disjointness preserving. Let $f,g\in C(X)$ with
$\vc f\vd_{\infty} =\vc g\vd_{\infty} =1$ and such that $c(f)\cap
c(g)=\emptyset$. 
It is easy to see that $(Tf)(y)(Tg)(y)=0$ whenever $y \notin U(y_0)$. On the other hand, if
$y\in U(y_0)$, then $\va (Tf)(y)(Tg)(y) \vb =\va G_y(f)\vb \va G_y(g)\vb$. It is clear that there exist two unimodular
scalars $a_1,a_2\in \mathbb{K}$ such that $a_1G_y(f)=\va G_y(f)\vb$ and
$a_2G_y(g)=\va G_y(g)\vb $. Since $\vc a_1f+a_2g\vd_{\infty}  =1$, then
\begin{eqnarray*}
\va G_y(f) \vb+ \va G_y(g) \vb &=& 
G_y(a_1f+a_2g)
\\ &=&
\alpha(y)\int_X(a_1f+a_2g)d\mu
\\ &\le&
\alpha(y)
\end{eqnarray*}

Consequently, $\va G_y(f)\vb \va G_y(g) \vb \le \alpha(y)^2/4$.
Indeed,
\[\va (Tf)(y)(Tg)(y) \vb=\va G_y (f) \vb \va G_y(g) \vb \le \frac{\alpha(y)^2}{4}\le \frac{(2\sqrt{\epsilon})^2}{4}=\epsilon\]

\medskip
Finally, we will see that  $\vc T-S\vd \ge
2\sqrt{\epsilon}(1-\delta)$ for every weighted composition map
$S:C(X)\longrightarrow C(Y)$.

Let $S \in \wc$, and let $h:c(S {\bf 1}) \ra X$ be its associated map. It is clear that, if $(S{\bf 1}) (y_0) =0$, then $\vc T-S \vd = \va (T-S) ({\bf 1}) (y_0) \vd = 2 \sqrt{\epsilon}$, so we may assume that $y_0 $ belongs to $c(S {\bf 1})$. By the regularity of the measure $\mu$, there exists
an open neighborhood $U$ of $h(y_0)$ such that $\mu(U)<\delta$.
Let us select $f\in C(X)$ satisfying
$0\le f\le 1$,  $f (h(y_0)) = 0$, and $f\equiv 1$ on $X\setminus U$.
Obviously  $(Sf)(y_0)=0$ and $\va (Tf)(y_0) \vb = \va G_{y_0}(f) \vb$. Hence
\begin{eqnarray*}
\vc T-S\vd  &\ge&
\va (Tf)(y_0) \vb 
\\ &\ge&
\alpha(y_0)\int_{X\setminus U}fd\mu
\\ &\ge&
2\sqrt{\epsilon}(1-\delta).
\end{eqnarray*}

This proves the first part. The second part is immediate because, since the measure can be taken atomless, then $\delta$ is as small as wanted.
\end{proof}

\begin{proof}[Proof of Theorem~\ref{vr}]
 We are assuming that there exists a discrete space $Z$ such that $Y= \beta Z$. Of course $Y$ may be finite (that is,  $Y=Z$), and this is necessarily the case  when we consider the second part of the theorem. Let $Z_0 := Z \cap Y_{2 \sqrt{\epsilon}} $, which is a nonempty closed and open subset of  $Z$, and $$Z_1 := \{z \in Z \setminus Z_0 : \exists x_z \in X \mbox{with} \va \lambda_{T_z} (\{x_z\}) \vb >0 \}.$$ Fix any $x_0 \in X$.
By Lemma~\ref{L1}, 
 we can define a map $h:Z \longrightarrow X$ such that
 $\va \lambda_{T_z} (\{h(z)\}) \vb \ge \sqrt{\vc T_z \vd^2 - 4 \epsilon}$ for every $z \in Z_0 $,  and such that $h(z) := x_z$ for $z \in Z_1$, and 
$h(z) := x_0$ for  $z \notin Z_0 \cup Z_1$. Also, since $Z$ is discrete, then $h$ is continuous, and consequently it can be extended to a continuous map from $Y$ to $X$ (when $Y \neq Z$). We will denote this extension also by $h$.

Define $\alpha : Z \longrightarrow \mathbb{K}$ as $\alpha (z) := \lambda_{T_z} (\{h(z)\})$ if $z \in Z_0 \cup Z_1$, and $\alpha (z) := 0$ otherwise, and extend it to a continuous function, also called $\alpha$, defined on $Y$. Then  consider  $S: C(X) \longrightarrow C(Y)$ defined as $(Sf) (y) := \alpha (y) f(h(y))$ for every $f \in C(X)$ and $y \in Y$.

\medskip

Let us check that
$\vc T- S\vd  \le 2\sqrt{ \epsilon}$.  Take  $f\in C(X)$
with $\vc f\vd_{\infty}\le 1$.
First, suppose that $z \in Z \setminus \pl Z_0 \cup Z_1 \pr$.  Then  $(Sf)(z) =0$, so
\[\va (Tf)(z)-( S f)(z) \vb = \va (Tf)(z) \vb \le 2 \sqrt{\epsilon}.\]
Now, if $z \in Z_1$, then $\vc T_z \vd \le 2 \sqrt{\epsilon}$ and, as in the proof of Lemma~\ref{llata}, 
\[\va (Tf)(z)-(Sf)(z) \vb  \le\vc T_z \vd  -  \va \lambda_{T_z} (\{h(z)\}) \vb < 2 \sqrt{\epsilon}.\]
On the other hand, if $z \in Z_0$, we know by Corollary~\ref{L3} that 
\[\va (Tf)(z)-(Sf)(z) \vb  \le\vc T_z \vd  -  \sqrt{\vc T_z\vd^2-4\epsilon}.\] By Lemma~\ref{pollo}, we have $\va (Tf)(z)-(Sf)(z) \vb  < 2 \sqrt{\epsilon}$ for every $z \in Z_0$. By continuity, we see that the same bound applies to every point in $Y$, and the first part is proved.

Finally, in the second case, that is, when $X$ does not admit an atomless regular Borel  probability  measure and $Y$ is finite, we have that $Y=Z$, and that $Z \setminus \pl Z_0 \cup Z_1 \pr$ consists of those points satisfying $\vc T_z \vd=0$. The conclusion is then easy.
\end{proof}

\section{The  case when $X$ is finite. How far}\label{cerilla}

In this section we prove  Theorems~\ref{recero} and \ref{cero}. The fact that Theorem~\ref{cero} does not hold for
arbitrary $Y$ (with more than one point) can be seen in next section (see Example~\ref{llegomalenayamigos2}).
\begin{proof}[Proof of Theorem~\ref{recero}]
We first prove the result when $n$ is odd. We follow the same ideas and notation as in the proof of Theorem~\ref{ex3}, with some differences. Namely,  we  directly take $\mu(\{x\}) = 1/n$ for every $x \in X$, and use a new function \[\alpha : Y \longrightarrow \ql 0, \min \tl \frac{2n \sqrt{\epsilon}}{\sqrt{n^2 -1}}, 1  \tr \qr \]
such that \[\alpha (y_0) = \min \tl \frac{2n \sqrt{\epsilon}}{\sqrt{n^2 -1}}, 1  \tr \]and
${\rm supp}(\alpha)\subset U(y_0)$. Notice that $\alpha (y_0 )= 2n \sqrt{\epsilon} / \sqrt{n^2 -1} $ if $\epsilon \le \omega_n$, and $\alpha (y_0 ) =1$ otherwise.

Clearly $\vc T \vd = 1$, and using the fact that \[\frac{(n-1)(n+1)}{4 n^2} = \max \tl \frac{l (n-l)}{ n^2} : 0 \le l \le n \tr, \]we easily see that  $T$ is $\epsilon$-disjointness preserving   both if $\epsilon \le \omega_n$ and if $\epsilon > \omega_n$. On the other hand, by the definition of the measure, reasoning as in the proof of Theorem~\ref{ex3}, we easily check that $\vc T-S \vd \ge \pl 1 - 1 /n \pr \alpha (y_0)$ for every  weighted composition $S$.

Finally,  we  follow the above pattern to prove the result when $n$ is even. In particular we also take $\mu(\{x\}) = 1/n$ for every $x \in X$, and use a function $\alpha : Y \longrightarrow [ 0, 2 \sqrt{\epsilon} ]$ with $\alpha (y_0) = 2 \sqrt{\epsilon}$ and ${\rm supp}(\alpha)\subset U(y_0)$. The rest of the proof  follows as above.
\end{proof}

\begin{proof}[Proof of Theorem~\ref{cero}]
Let $Z$ be a discrete space with $Y = \beta Z$. Since $X$ has $n$ points, say $X:= \{x_1, \ldots, x_n\}$, we have that, for  each $z \in Z$, $T_z$ is of the form $T_z := \sum_{i=1}^n a_i^z \delta_{x_i}$, for some $a_i^z \in \mathbb{K}$, $i = 1, \ldots, n$.
Consequently, for each $z \in Z$, we can choose a point $x_z \in X$ such that $\va \lambda_{T_z} (\{x_z\}) \vb \ge \va \lambda_{T_z} (\{x\}) \vb$ for every $x \in X$, which yields $\va \lambda_{T_z} (\{x_z\}) \vb \ge \vc T_z \vd /n$. This allows us to define a map $h: Z \longrightarrow X$ as $h(z) := x_z$ for every $z \in Z$. Since $h$ is continuous we can extend it to a continuous function defined on the whole $Y$, which we also call $h$.

Following a similar process as in the proof of Theorem~\ref{vr}, define $\alpha : Z \longrightarrow \mathbb{K}$ as $\alpha (z) := \lambda_{T_z} (\{h(z)\})$, and extend it to a continuous function defined on $Y$, also denoted by $\alpha$. 
Now,  define $S: C(X) \longrightarrow C(Y)$ as $(Sf) (y) := \alpha (y) f(h(y))$ for every $f \in C(X)$ and $y \in Y$.

\medskip 

Fix any $f \in C(X)$, $\vc f \vd_{\infty} \le 1$, and $z \in Z$. It is then easy to check that $\va (Tf) (z) - (Sf) (z )\vb \le (n-1) \vc T_z \vd /n $ . Consequently, if $\vc T_z \vd \le 2 \sqrt{\epsilon}$, we have \[\va (Tf) (z) - (Sf) (z )\vb \le  \frac{2(n-1)}{n}\sqrt{\epsilon} \le o'_X (\epsilon).\] 

\medskip
Let us now study the case when $\vc T_z \vd > 2 \sqrt{\epsilon}$. First, we know from  Corollary~\ref{L3} that $\va (Tf)(z)-(Sf)(z) \vb  \le \vc T_z \vd  -  \sqrt{\vc T_z\vd^2-4\epsilon}$. Next, we split the proof  into two cases.

\begin{itemize}
\item {\em Case 1. Suppose that $n$ is odd.} We see  that to finish the proof it is enough  to show  that \[\min \pl \vc T_z \vd  -  \sqrt{\vc T_z\vd^2-4\epsilon}, \frac{n-1}{n} \vc T_z \vd \pr \le   o'_X ( \epsilon)  \]
whenever $\vc T_z \vd > 2 \sqrt{\epsilon}$. To do this, we consider the functions 
$\gamma, \delta  :[2 \sqrt{\epsilon}, 1] \longrightarrow \mathbb{R}$ defined respectively as 
$\gamma (t) := t - \sqrt{t^2 -4 \epsilon}$, and $\delta (t) :=  (n-1) t/n $ for every 
$t \in [2 \sqrt{\epsilon}, 1]$. We have that $\gamma$ is decreasing (see Lemma~\ref{pollo}) and $\delta$ is increasing on the whole interval of definition. 

Now, if $\epsilon \le  \omega_n$,  then for $t_0 := \sqrt{\epsilon /\omega_n} \in \ql 2 \sqrt{\epsilon}, 1 \qr$, 
we have $\gamma (t_0)= \delta (t_0)$. This common value turns out to be 
$\delta (t_0) = 2\sqrt{ (n-1) \epsilon/(n+1)}$, that is, it is equal to $o'_X ( \epsilon)$, and we get that
 $\va (Tf) (z) - (Sf) (z )\vb \le o'_X (\epsilon)$ for every $z \in Z$.

On the other hand, if $\epsilon >  \omega_n$,  then 
$\delta (1) \le \gamma (1)$, so
$\delta (t) \le \gamma (t)$ for every $t \in [2 \sqrt{\epsilon}, 1]$, and $\va (Tf) (z) - (Sf) (z )\vb \le \delta (1) $ for every $z \in Z$. Since $\delta(1) = (n-1)/n = o'_X  ( \epsilon)$, we obtain the desired inequality also in this case.

\item {\em Case 2. Suppose that $n$ is even.} By Proposition~\ref{ksis}, we get that $\va \lambda_{T_z} (\{h(z)\}) \vb \ge \pl \vc T_z \vd + \sqrt{\vc T_z \vd^2 - 4 \epsilon} \pr \Big\slash  n$, so 
\[
\va (Tf) (z) - (Sf) (z )\vb \le \vc T_z \vd - \frac{\vc T_z \vd + \sqrt{\vc T_z \vd^2 - 4 \epsilon}}{n}.
\]

Consequently, to finish the proof in this case we just need to show that
\[\min \pl \vc T_z \vd  -  \sqrt{\vc T_z\vd^2-4\epsilon}, \vc T_z \vd - \frac{\vc T_z \vd + \sqrt{\vc T_z \vd^2 - 4 \epsilon}}{n} \pr \le    \frac{2 (n-1) \sqrt{\epsilon}}{n}. \]

Let $\eta  : [2 \sqrt{\epsilon}, 1] \longrightarrow \mathbb{R}$ be defined as \[\eta (t) := t - \frac{t +  \sqrt{t^2 -4 \epsilon}}{n}\] for every $t \in [2 \sqrt{\epsilon}, 1]$, and consider also the function $\gamma$ defined above.
Clearly, when $n=2$ we have $  \eta = \gamma /2$, and the above inequality follows from  Lemma~\ref{pollo}. So we assume that $n \neq 2$. 
We easily see that $\eta (t) \le \gamma (t) $ whenever $t \in \ql 2 \sqrt{\epsilon}, \sqrt{\epsilon /\omega_{n-1}} \qr$,
 and that  
$\eta$ is decreasing in $\ql 2 \sqrt{\epsilon}, \sqrt{\epsilon /\omega_{n-1}} \qr$ ($t \le 1$).
 We deduce that 
$$\min \pl \gamma (t) , \eta (t) \pr \le  \eta \pl 2 \sqrt{\epsilon} \pr = \frac{2 \pl n-1 \pr \sqrt{\epsilon}}{n}$$
whenever $2 \sqrt{\epsilon} \le t \le 1 $, as it was to be seen.
\end{itemize}

By denseness of $Z$ in $Y$, we conclude that $\vc T-S \vd  \le o'_X( \epsilon)$.
\end{proof}

\section{The  case when $X$ is finite. How close}\label{myrna}

	In this section we start proving Theorem~\ref{opodel}, and then we give an example showing that the bound
	given in it is in fact sharp. Of course this implies in particular that Theorem~\ref{cero} does not hold for $Y$
arbitrary, and consequently that the bounds for instability given in Theorem~\ref{recero} are not bounds for stability.

At the end of the section we provide  an example which shows that Theorem~\ref{opodel} is not valid in general for $X$ infinite, even in the simplest case, that is, when $X$ is  is a countable set with just one accumulation point. We see not only that  $2 \sqrt{\epsilon}$ is not a bound for stability, but that every bound for stability must be bigger than $\sqrt{8 \epsilon}$. This shows a dramatic passage from finite to infinite.

\begin{proof}[Proof of Theorem~\ref{opodel}]
We assume that $X = \{x_1 , \ldots, x_n \}$.
It is easy to see that $\vc T_y \vd = \sum_{i=1}^n \va \pl T \xi_{\{x_i\}} \pr \pl y \pr \vb$ for every $y \in Y$, and consequently the map from $Y$ to $\mathbb{K}$ given by $y \mapsto \vc T_y \vd$ is continuous.

\smallskip
For each set $C \subset X$, we consider $A_C := E_C \cap \pl \bigcap_{u \in C} E_C^u \pr$, where
\begin{eqnarray*}
E_C &:=& \tl y \in Y_{2 \sqrt{\epsilon}} : \va \lambda_{T_{y}} \vb \pl C \pr  \ge  \frac{\vc T_y \vd}{2} \tr
\\ &=& \tl y \in Y_{2 \sqrt{\epsilon}} : \sum_{x \in C} \va \pl  T  \xi_{\{x\}} \pr (y)  \vb \ge \frac{\sum_{i =1}^n \va \pl  T  \xi_{\tl x_i \tr } \pr (y)  \vb}{2} \tr , 
\end{eqnarray*}
and
\begin{eqnarray*}
E_C^u &:=& \tl y \in Y_{2 \sqrt{\epsilon}} : 
\va \lambda_{T_{y}} \vb \pl C\setminus \{ u\}   \pr   < \frac{\vc T_y \vd}{2} \tr
\\ &=& \tl y \in Y_{2 \sqrt{\epsilon}} : \sum_{x \in C \setminus \{u\}} \va \pl  T  \xi_{\{x\}} \pr (y)  \vb < \frac{\sum_{i =1}^n \va \pl  T  \xi_{\tl x_i \tr } \pr (y)  \vb}{2} \tr ,
\end{eqnarray*}

By Lemma~\ref{pol-immemoriam}, we know that $E_C$ coincides with the set of all $y \in Y_{2 \sqrt{\epsilon}}$ satisfying
$ \va \lambda_{T_{y}} \vb \pl C \pr  > \vc T_y \vd / 2$, that is, 
$$\sum_{x \in C} \va \pl  T  \xi_{\{x\}} \pr (y)  \vb >  \sum_{i =1}^n \va \pl  T  \xi_{\tl x_i \tr } \pr (y)  \vb /2 ,$$
and consequently is both open and closed as a 
subset of 
$Y_{2 \sqrt{\epsilon}}$.
In the same way, each $E_C^u$ is also open and closed in $Y_{2 \sqrt{\epsilon}}$, and so is $A_C$.

Notice   that again by Lemma~\ref{pol-immemoriam}, if $y \in A_C$, then 
$\va \lambda_{T_y} \vb \pl C \pr  \ge \pl \vc T_y \vd + 
\sqrt{\vc T_y \vd^2   - 4 \epsilon} \pr /2		$,  and 
$\va \lambda_{T_y} \vb \pl C \setminus \{u\} \pr  \le \pl \vc T_y \vd - 
\sqrt{\vc T_y \vd^2   - 4 \epsilon} \pr /2		$ for every $u \in C$. We conclude that 
$\va \lambda_{T_y} \vb \pl \tl u \tr \pr  \ge
\sqrt{\vc T_y \vd^2   - 4 \epsilon}$ for every $u \in C$.

On the other hand, it is clear that each element $y \in Y_{2 \sqrt{\epsilon}}$ belongs to some  $A_C$, so we
can make a finite partition of $Y_{2 \sqrt{\epsilon}}$ by open and closed sets $B_1, \ldots, B_m$, where each $B_i \subset A_C$ for some set $C$.     This implies that, for each $i =1, \ldots, m$, there exists a point $u_i \in X$ such that $\va \lambda_{T_y} (\{u_i\}) \vb \ge \sqrt{\vc T_y \vd^2 - 4 \epsilon}$
for every $y \in B_i$.  This allows us to define  a continuous map $h: Y_{2 \sqrt{\epsilon}} \ra X$ as $h (y) := u_i$ for every $y \in B_i$. Also take any  map $\mathbf{b} : Y \ra \mathbb{K}$ such that
$\mathbf{b} (y) = \lambda_{T_y} (\tl h(y) \tr )$ whenever $y \in Y_{2 \sqrt{\epsilon}}$, which is continuous
 on $Y_{2 \sqrt{\epsilon}}$.

\smallskip

We next follow a process similar to that seen in the proof of Theorem~\ref{rz}, with some necessary modifications. In particular we use  the map $\alpha \in C(Y)$ given  as 
$$ \alpha (y) := \sqrt{\frac{\vc T_y \vd - 2 \sqrt{\epsilon}}{\vc T_y \vd + 2 \sqrt{\epsilon}}}$$ for $y \in Y_{2 \sqrt{\epsilon}}$, and constantly as $0$ on $Y \setminus Y_{2 \sqrt{\epsilon}}$,
and  define 
 a weighted
composition map $S$ as \[(S f)(y):= \alpha(y) \mathbf{b} (y) f(h (y))\]
for all $f\in C(X)$ and $y\in Y$.

\smallskip

Now, for $y \in Y_{2 \sqrt{\epsilon}}$, put 
 $A_y:= \mathbf{b} (y) \delta_{h(y)}$.
It is easy to check that $\vc T_y\vd   = \vc T_y -A_y \vd   + \vc A_y \vd  $, and that, for $t \in [0,1]$ and
  $f \in C(X)$ with $\vc f \vd_{\infty} \le 1$, 
  \begin{eqnarray*}
\va (Tf)(y)- t  \mathbf{b} (y)  f(h(y)) \vb &\le& \va T_y f- A_y f \vb + \va A_y f - t A_y f \vb  
\\&\le&
\vc T_y -A_y\vd   + (1 - t) \vc A_y \vd     
\\ &=&
\vc T_y\vd  - t \vc  A_y\vd 
\\ &\le& \vc T_y \vd  - t \sqrt{\vc T_y\vd^2-4\epsilon},
\end{eqnarray*}

This allows us to use  the same arguments as in the proof of Theorem~\ref{rz}, and  show that 
$\vc T- S\vd  \le 2\sqrt{ \epsilon}$.  
\end{proof}

\begin{ex}\label{llegomalenayamigos2}
{\em An example showing that the bound given in  Theorem~\ref{opodel}  is sharp.}

Let $Y:= [-1,1]$ and $\epsilon  \in (0, 1/4)$. 
Take  two continuous and even functions
$\alpha : [-1,1] \ra \ql 2 \sqrt{\epsilon}, 1 \qr$ and $\beta : [-1,1] \ra \ql 1 , 1/ \sqrt{1-4 \epsilon}  \qr$, both increasing in $[0,1]$, such that  $\alpha (0) = 2 \sqrt{\epsilon}$, 
 $\alpha \pl 1 \pr = 1$, $\beta (0) =1$, and $\beta (1) = 1/ \sqrt{1 -4 \epsilon}$. Taking into account that $x \mapsto x / \sqrt{x^2 - 4 \epsilon}$ is decreasing for $x >2 \sqrt{\epsilon}$, we see that
 $\beta(t) \sqrt{\alpha^2 (t) - 4 \epsilon} \le \alpha (t)$ for every $t \in [-1, 1]$.

Now 
pick two points $ A, B \in X$ (recall  that we are assuming that
$X$ has at least two points), and
consider $T :C(X) \ra C(Y)$ such that, for every
 $f \in C(X)$, 
 \begin{eqnarray*}
(Tf) (t) &=&  \frac{\alpha (t)  + \sgn (t) \beta (t) \sqrt{ \alpha (t)^2 -4 \epsilon}}{2} \ f(A) \\ &+&  \frac{\alpha (t)  - \sgn (t) \beta (t) \sqrt{ \alpha (t)^2 -4 \epsilon}}{2} \ f(B) 
\end{eqnarray*}
 for every $t \in [-1,1]$, where $\sgn$ denotes the usual 
sign function.

It is clear that $T$ 
is $\epsilon$-disjointness preserving and has norm $1$. Also, since $\pl T \mathbf{1} \pr (\pm 1) = 1 $, it is easily seen that if a weighted
 composition map $S = a \cdot f \circ h$ is
at distance less than $2 \sqrt{\epsilon}$ from $T$, then    $1, -1 \in c(a)$.
On the other hand, if we suppose that $h(1) \neq A$, then we take $f_0 \in C(X)$ with $f_0 (A) =1 = \vc f_0 \vd_{\infty}$, and $f_0 (h(1)) =0 = f (B)$, and we see that $$\va (T - S) (f_0) (1)  \vb = 1 > 2 \sqrt{\epsilon}.$$
We deduce that, as $\vc T  - S \vd < 2 \sqrt{\epsilon}$, then $h(1) =A$, and in a similar way $h(-1) =B$.
Since $Y$ is connected and $h: c(a) \ra X$
 is continuous, we conclude that there is a point
 $t_0 \in Y$ such that
 $t_0 \notin c(a)$, that is, $(Sf) (t_0) =0$ for every $f \in C(X)$. Then it is easy to see that $\vc T - S \vd \ge \alpha (t_0) \ge  2 \sqrt{\epsilon} $.
 
 Notice that the above process is also valid if $X$ is infinite.
\end{ex}

\begin{ex}\label{destranyo-kajero}
{\em For $X= \mathbb{N} \cup \tl \infty \tr$ and any $\epsilon \in (0, 1/8)$, an $\epsilon$-disjointness preserving operator of norm $1$ whose distance to any weighted composition map is at least $\sqrt{8 \epsilon}$.}

 Given $r >0$, we denote by $C(r)$ the circle with center $0$ and radius $r$ in the complex plane. We take a strictly decreasing sequence $(r_n )$ in $\mathbb{R}$ converging to $0$ and the interval $[-r_1, 0]$, and define  $Y \subset \mathbb{C}$ as $$Y := [-r_1, 0] \cup \bigcup_{n=1}^{\infty} C(r_n).$$
 
 We also take $X := \mathbb{N} \cup \tl \infty \tr$.

\smallskip

Next let
\[\pi_0 :=\frac{1}{2} - \frac{ \sqrt{2}}{4},\]
and  consider a continuous map $\alpha : \bigcup_{n=1}^{\infty} C(r_n) \ra \ql 0, \pi_0 \qr$ such that $\alpha \pl -r_n \pr =0$
 and $\alpha \pl r_n \pr =\pi_0$ for every $n \in \mathbb{N}$.

Next, for each $f \in C(X)$ and $n \in \mathbb{N}$, we define, for $z \in C(r_n)$,
\[ (Tf) (z) :=
\pl \alpha (z) + \sqrt{2}/2 \pr f (2n) - \alpha (z) f (2n -1) .\]

On the other hand, if  $n \in \mathbb{N}$ 
and  $z \in (- r_n , - r_{n+1})$, then it is   of the form
\[z = - \pl t r_n  + (1-t) r_{n +1} \pr,\] where $t$ belongs to the open interval $
(0,1)$. In this case, we define
\begin{eqnarray*}
(Tf) (z) &:=&  t \pl Tf \pr \pl - r_n \pr + \pl 1-t \pr \pl Tf \pr \pl - r_{n+1} \pr  
\\&=& \frac{\sqrt{2}}{2}  \ql t f (2n)
+ (1- t) f (2n + 2 )  \qr. 
\end{eqnarray*}
Finally we put $$(Tf) (0) := \frac{\sqrt{2}}{2} f(\infty).$$

\smallskip
It is apparent that $T :C(X) \longrightarrow C(Y)$ is linear and
continuous, with $\vc T \vd =1$. Furthermore it is easy to see that if $f, g \in C(X)$ satisfy  $\vc f \vd_{\infty} =1 = \vc g \vd_{\infty}$ and $fg =0$, then $\va (Tf) (z) (Tg) (z) \vb \le 1/8$ for every $z \in Y$, that is, $T$ is $1/8$-disjointness preserving.

\smallskip
We will now check that we cannot find a weighted composition map "near"
$T$. Namely, if $S: C(X) \longrightarrow C(Y)$ denotes a weighted composition
map, then we claim that $\vc   S- T \vd   \ge 1$.

Let $D := c(S {\bf 1})$, and consider the continuous map $h :D
\longrightarrow X$ given by $S$. 
If $r_n \notin D$ for some $n \in \mathbb{N}$, then we take $f_n := \xi_{\{2n\}} - \xi_{\{2n-1\}}$.
 It is clear that $\vc   f_n \vd_{\infty}  =1$,
$(Tf_n) (r_n)=1$ and, as $r_n \notin D$, then $(Sf_n) (r_n) =0$. As a
consequence $\vc   S-T \vd   \ge 1$. It is also easy to see that we obtain the same conclusion if $h(r_n) \notin \{2n, 2n-1\}$. 

On the other hand, if we suppose that $0 \notin D$, 
then $(S {\bf 1}) (0) =0$. Therefore, given any $\delta
>0$, there exists a neighborhood $U$ of $0$ in $Y$ such that $\left|
(S{\bf 1}) (z) \right| < \delta$ for all $z \in U$. 
Choose now $r_n \in U$, and let $f_n$ be as above. It is apparent that either $({\bf 1} - f_n) (h(r_n)) = 0$ or
$({\bf 1} + f_n) (h(r_n)) = 0$,
which implies that $(S {\bf 1}) (r_n) - (Sf_n) (r_n) =0$ or $(S {\bf 1}) (r_n) + (Sf_n) (r_n) =0$.
Consequently, $\left| (Sf_n) (r_n) \right| =
\left| (S{\bf 1}) (r_n) \right| < \delta$ and, as in the previous cases, we easily deduce that  $\vc  S - T \vd   \ge 1-
\delta$. Therefore $\vc  S - T \vd   \ge 1$.

 Finally, we will see that we cannot have  $0 \in D$ and  $h (r_n) \in \{2n , 2n-1\}$ for every $n \in \mathbb{N}$. 
Otherwise, as $D$ is open, there
exists $s >0$ such that $B(0, s) \cap Y \subset D$. Also $h$ is continuous and $B(0, s) \cap Y $ is connected,
so $h \pl B(0, s) \cap Y \pr$ is constant. This is obviously impossible by our assumptions on $h (r_n)$.
 This contradiction shows that this case does not hold.
Hence,  we have $\vc  S - T \vd   \ge 1$. 

\smallskip

Let $0 < \epsilon < 1/8$.  We are going to construct a norm one
$\epsilon$-disjointness preserving map $T'$  such that, for all weighted composition map
 $S'$, $\vc   T' - S' \vd   \ge \sqrt{8 \epsilon}$.
Let \[\gamma := \sqrt{8 \epsilon},\] and let  $X' := X
\cup \{0\}$ and $Y' := Y \cup \{2r_1\} \subset \mathbb{C}$. 
Define a linear map $T' : C(X') \longrightarrow C(Y')$ as $(T' f)
(2 r_1) := f(0)$ and, for all $z \in Y$, $(T'f) (z) := \gamma
(Tf_r) (z)$, where $f_r$ is the restriction of $f$ to $X$.

Since $T$ is a $1/8$-disjointness preserving and $\epsilon = \gamma^2 /8$, then $T'$ is 
$\epsilon$-disjointness preserving. The conclusion follows as in Example~\ref{gustavo}.
\end{ex}

\section{Acknowledgements}
The authors would like  to thank Prof. Luis Alberto Fern\'andez for his help with the drawings.

\bigskip

\end{document}